\documentclass[12pt]{amsart}
\usepackage{amsmath,latexsym,amsfonts,amssymb,amsthm}
\usepackage{geometry}
\geometry{a4paper,top=3.5cm,bottom=3.8cm,left=2.5cm,right=2.5cm}
\usepackage{hyperref}
\usepackage{mathrsfs}
\usepackage{graphicx,color}
\usepackage[alphabetic]{amsrefs}
\usepackage[all,cmtip]{xy}
\usepackage{bbm}

\newtheorem{prop}{Proposition}[section]
\newtheorem{thm}[prop]{Theorem}
\newtheorem{cor}[prop]{Corollary}
\newtheorem{conj}[prop]{Conjecture}
\newtheorem{lem}[prop]{Lemma}

\theoremstyle{definition}
\newtheorem{que}[prop]{Question}
\newtheorem{defn}[prop]{Definition}
\newtheorem{expl}[prop]{Example}
\newtheorem{rem}[prop]{\it Remark}

\newtheorem*{claim*}{Claim}

\newcommand{\bP}{\mathbb{P}}

\newcommand{\bR}{\mathbb{R}}
\newcommand{\bA}{\mathbb{A}}
\newcommand{\bQ}{\mathbb{Q}}

\newcommand{\bN}{\mathbb{N}}

\newcommand{\bG}{\mathbb{G}}
\newcommand{\bT}{\mathbb{T}}
\newcommand{\bk}{\mathbbm{k}}

\newcommand{\oX}{\overline{X}}
\newcommand{\oY}{\overline{Y}}
\newcommand{\oDelta}{\overline{\Delta}}

\newcommand{\cX}{\mathcal{X}}

\newcommand{\cC}{\mathcal{C}}
\newcommand{\cO}{\mathcal{O}}
\newcommand{\cL}{\mathcal{L}}

\newcommand{\cJ}{\mathcal{J}}
\newcommand{\cE}{\mathcal{E}}
\newcommand{\cD}{\mathcal{D}}

\newcommand{\cR}{\mathcal{R}}
\newcommand{\cS}{\mathcal{S}}
\newcommand{\cW}{\mathcal{W}}

\newcommand{\ocD}{\overline{\mathcal{D}}}

\newcommand{\fa}{\mathfrak{a}}
\newcommand{\fb}{\mathfrak{b}}

\newcommand{\fm}{\mathfrak{m}}
\newcommand{\ft}{\mathfrak{t}}

\newcommand{\va}{\mathbf{a}}

\newcommand{\Spec}{\mathbf{Spec}}
\newcommand{\Proj}{\mathbf{Proj}}
\newcommand{\Supp}{\mathrm{Supp}}
\newcommand{\Hom}{\mathrm{Hom}}

\newcommand{\lct}{\mathrm{lct}}

\newcommand{\Pic}{\mathrm{Pic}}
\newcommand{\Aut}{\mathrm{Aut}}
\newcommand{\vol}{\mathrm{vol}}
\newcommand{\ord}{\mathrm{ord}}
\newcommand{\Val}{\mathrm{Val}}

\newcommand{\hvol}{\widehat{\rm vol}}

\newcommand{\wt}{\mathrm{wt}}
\newcommand{\Coef}{\mathrm{Coef}}
\newcommand{\Diff}{\mathrm{Diff}}

\newcommand{\mldk}{\mathrm{mld}^{\mathrm{K}}}

\newcommand{\gr}{\mathrm{gr}}

\numberwithin{equation}{section}

\title[Boundedness of singularities II]{On boundedness of singularities and minimal log discrepancies of Koll\'ar components, II}

% MSC: 14B05 14J45 14E99; optional: 32Q20(KE), 53B35(local DG)

\author{Ziquan Zhuang}
% \address{Department of Mathematics, Princeton University, Princeton, NJ 08544, USA}
% \email{zzhuang@princeton.edu}

\address{Department of Mathematics, Johns Hopkins University, Baltimore, MD 21218, USA}
\email{zzhuang@jhu.edu}

\date{}

\begin{document}

\maketitle

\begin{abstract}
    We show that a set of K-semistable log Fano cone singularities is bounded if and only if their local volumes are bounded away from zero, and their minimal log discrepancies of Koll\'ar components are bounded from above. As corollaries, we confirm the boundedness conjecture for K-semistable log Fano cone singularities in dimension three, and show that local volumes of $3$-dimensional klt singularities only accumulate at zero.
\end{abstract}

\section{Introduction}

Following the recent study on K-stability of Fano varieties (see \cite{Xu-K-stability-survey} for a comprehensive survey), there has been growing interest in establishing a parallel stability theory for klt singularities, which are the local analog of Fano varieties. In the global theory, boundedness of Fano varieties plays an important role. Building on the seminal work of Birkar \cites{Birkar-bab-1,Birkar-bab-2}, Jiang \cite{Jia-Kss-Fano-bdd} proved that (in any fixed dimension) K-semistable Fano varieties with anti-canonical volumes bounded away from zero form a bounded family.\footnote{Several different proofs were later found in \cites{LLX-nv-survey,XZ-minimizer-unique}.} This is the first step in the general construction of the K-moduli space of Fano varieties; it is also a key ingredient in the proof \cite{LXZ-HRFG} of the global version of the Higher Rank Finite Generation Conjecture.

To advance the local stability theory, it is therefore natural to investigate the boundedness of klt singularities. Several years ago, Li \cite{Li-normalized-volume} introduced an interesting invariant of klt singularities called the local volume. It has become clear that the stability theory of klt singularities should be built around this invariant. In particular, it is speculated that (in any fixed dimension) klt singularities with local volumes bounded away from zero are specially bounded, i.e. they isotrivially degenerate to a bounded family. This is known in some special cases \cites{HLQ-vol-ACC,MS-bdd-toric,MS-bdd-complexity-one,Z-mld^K-1}, but the general situation is still quite mysterious. % One evidence is that for cone singularities $C(V,-K_V)$ over anti-canonically polarized K-semistable Fano varieties $V$, the local volumes are exactly the anti-canonical volumes of $V$ (see \cite{LL-vol-minimizer-KE}), so boundedness in this case follows from Jiang's result \cite{Jia-Kss-Fano-bdd} (in contrast, if the polarization is different from $-K_V$, special boundedness is an open question).

By the recent solution of the Stable Degeneration Conjecture \cites{Blu-minimizer-exist,LX-stability-higher-rank,Xu-quasi-monomial,XZ-minimizer-unique,LWX-metric-tangent-cone,XZ-SDC} (see also \cites{LX-stability-kc,BLQ-convexity}), every klt singularity has a canonical ``stable degeneration'' to a K-semistable log Fano cone singularity (see Section \ref{ss:log Fano cone} for the precise definition; roughly speaking, K-semistable log Fano cone singularities are generalizations of cones over K-semistable Fano varieties). This suggests a more precise boundedness conjecture (\cite[Conjecture 1.7]{XZ-SDC}, see also \cite[Problem 6.9]{SZ-no-semistability}): 

\begin{conj} \label{conj:K-ss cone bdd}
Fix $n\in \bN$, $\varepsilon>0$ and a finite set $I\subseteq [0,1]$. Then the set
\[
\Big\{
(X,\Delta)\  \Big|\begin{array}{l} x\in (X,\Delta) \mbox{ is a K-semistable log Fano cone singularity},\\ 
\dim X=n,\, {\rm Coef}(\Delta)\subseteq I,\, \mbox{and } \hvol(x,X,\Delta)\ge \varepsilon\end{array}
\Big\}
\]
is bounded.
\end{conj}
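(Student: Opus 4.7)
The plan is to reduce boundedness of the singularity $x\in(X,\Delta)$ to boundedness of a ``base'' of the cone via a Koll\'ar component, and then apply Jiang's boundedness theorem for K-semistable log Fano pairs. Starting from the torus-equivariant quasi-monomial valuation $v_0$ that minimizes $\hvol(\,\cdot\,;X,\Delta)$ (provided by the Stable Degeneration Conjecture together with the K-semistability of $x$), I would first approximate $v_0$ by divisorial valuations $\ord_E$ extracting Koll\'ar components, chosen so that the induced pair $(E,\Diff_E(\Delta))$ is K-semistable log Fano with $\vol(-K_E-\Diff_E(\Delta))$ bounded below, using the standard volume formula relating $\hvol(\ord_E)$ to the log discrepancy $A_{X,\Delta}(\ord_E)$ and the anti-canonical volume of $(E,\Diff_E(\Delta))$. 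Once this is arranged, Jiang's theorem places $(E,\Diff_E(\Delta))$ in a bounded family, since $\Coef(\Delta)\subseteq I$ is finite.

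The next step is to reconstruct the formal germ of $x\in(X,\Delta)$ from the graded algebra $R=\bigoplus_{m\ge 0}\pi_*\cO_Y(-mE)$, where $\pi:Y\to X$ denotes the plt blowup extracting $E$. A priori $R$ is encoded by the polarized data $(E,\Diff_E(\Delta),-E|_E)$, so in order to put $R$ --- and hence, via effective algebraization, the germ itself --- in a bounded family, one must bound the Cartier index of the $\bQ$-divisor $-E|_E$ on $E$. This is the main obstacle: with only $\hvol\ge\varepsilon$ in hand, the extracted Koll\'ar component $E$ can in principle be arbitrarily ``deep'' over $x$, so that $-E|_E$ becomes a fractional multiple of a bounded Cartier class with an unbounded denominator, and the reconstruction would fail. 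Precisely for this reason I would expect the route to the conjecture to add a second hypothesis --- a uniform upper bound on the minimal log discrepancy $\mldk$ taken over Koll\'ar components --- which controls $A_{X,\Delta}(\ord_E)$ and hence the Cartier index of $-E|_E$, closing the argument.

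To deduce Conjecture \ref{conj:K-ss cone bdd} unconditionally in a fixed dimension $n$, it then suffices to show that $\mldk$ is automatically bounded above on the given set. In dimension three this should be accessible by combining the ACC for local volumes, existing bounds for $3$-dimensional klt singularities, and a careful interpolation between the minimizer $v_0$ and nearby Koll\'ar components; this would also immediately yield the ``accumulation only at zero'' statement for three-dimensional local volumes advertised in the abstract.
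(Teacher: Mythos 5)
Your high-level architecture matches the paper's: recognize that the unconditional conjecture should be proved by adding a uniform upper bound on $\mldk$ as an extra hypothesis (giving the paper's Theorem \ref{main:K-ss cone bdd}), and then observe that in dimension three this extra hypothesis is automatic (the paper's Proposition \ref{prop:mld^K bdd, vol>epsilon, dim=3}, imported from Part I). But the middle step --- actually proving boundedness under $\hvol\ge\varepsilon$ and $\mldk\le A$ --- is where your route diverges and where it has a real gap.

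The gap is that you are asking a single Koll\'ar component $E$ to play two roles that, in general, are played by two \emph{different} divisors. You want $E$ to have a near-K-semistable base $(E,\Diff_E(\Delta))$ with $\vol(-K_E-\Diff_E(\Delta))$ bounded below (so that Jiang-type boundedness applies), and at the same time to admit a reconstruction of $X$ from the graded data with bounded Cartier index of $-E|_E$. For the first role, note that the volume identity reads $\hvol_{X,\Delta}(\ord_E)=A_{X,\Delta}(E)\cdot\vol(-K_E-\Diff_E(\Delta))$, so a lower bound on $\vol(-K_E-\Diff_E(\Delta))$ \emph{already} forces an upper bound on $A_{X,\Delta}(E)$ --- you do not get the volume bound for free from $\hvol\ge\varepsilon$, contrary to what the first paragraph suggests (this is exactly the Weil-index issue the paper flags in its strategy section, illustrated by the weighted projective spaces $\bP(a_1,\dots,a_n)$, whose anticanonical volumes can be arbitrarily small). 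The Koll\'ar component with $A_{X,\Delta}(E)\le A$ supplied by $\mldk\le A$ is the natural candidate, but there is no reason for its adjunction pair $(E,\Diff_E(\Delta))$ to be K-semistable or even to have $\delta$ bounded away from $0$. Conversely, the Koll\'ar component $V_0$ that actually reconstructs $X$ as a cone (after perturbing the polarization to quasi-regular) does have near-K-semistable base, but its log discrepancy $A_{X,\Delta}(V_0)$ is the quantity with no a priori bound. Your plan needs both properties to hold for the same $E$, and they generally do not.

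The paper's proof is designed precisely to avoid having to make this choice. Rather than going through Jiang's theorem on the base of the cone, it passes to the projective orbifold cone $\oX=C_p(V,L)$: Lemma \ref{lem:Sasaki vol as global vol} shows $\vol(-(K_{\oX}+\oDelta+V_\infty))=\hvol_{X,\Delta}(\ord_{V_0})$ on the nose, which is bounded above and below regardless of the Weil index of the base. The $\mldk$-realizing Koll\'ar component $E$ (possibly different from $V_0$) enters only through Lemma \ref{lem:alpha gives Izumi const} and the Izumi-type estimate \eqref{eq:lct bound Izumi type} in the proof of Proposition \ref{prop:bir bdd proj cone}, which yields effective birationality of $|-m(K_{\oX}+\oDelta+V_\infty)|$ with $m$ depending only on $n,\varepsilon,A,I$. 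This separation of roles --- one Koll\'ar component for the global volume bound, another for the local Izumi constant --- is the key technical idea that the proposal does not have. Note also that even with $(E,\Diff_E(\Delta),-E|_E)$ in a bounded family, the graded ring $\bigoplus_m \fa_m/\fa_{m+1}$ only recovers the degeneration of $X$, not $X$ itself, unless $E=V_0$; this is precisely the gap between special boundedness (the content of Part I) and actual boundedness (the content of this paper). Finally, for dimension three the "careful interpolation" you invoke is left undefined, while the paper cites a concrete result from Part I; and ACC for local volumes is a \emph{consequence} of boundedness here, not an available input.
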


Here $\hvol(x,X,\Delta)$ denotes the local volume of the singularity $x\in (X,\Delta)$. This conjecture has been verified for toric singularities \cites{MS-bdd-toric,Z-mld^K-1}, for hypersurface singularities, and for singularities with torus actions of complexity one \cite{MS-bdd-complexity-one}.

In this paper, we study Conjecture \ref{conj:K-ss cone bdd} using the minimal log discrepancies of Koll\'ar components (or simply $\mldk$), see Section \ref{ss:kc}. Our main result gives a boundedness criterion in terms of $\mldk$ and the local volume.

\begin{thm}[=Corollary \ref{cor:K-ss cone bdd}] \label{main:K-ss cone bdd}
Fix $n\in \bN$ and consider a set $\cS$ of $n$-dimensional K-semistable log Fano cone singularities with coefficients in a fixed finite set $I\subseteq [0,1]$. Then $\cS$ is bounded if and only if there exist some $\varepsilon,A>0$ such that
\[
\hvol(x,X,\Delta)\ge \varepsilon \quad \mathit{and} \quad \mldk(x,X,\Delta)\le A
\]
for all $x\in (X,\Delta)$ in $\cS$.
\end{thm}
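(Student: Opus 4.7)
The ``only if'' direction is straightforward: in a bounded family of log Fano cone singularities with fixed coefficient set $I$, both $\hvol$ and $\mldk$ are constructible functions on the parameter space, hence take only finitely many (positive) values, giving the desired uniform bounds.

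For the ``if'' direction, my plan is in three steps. \emph{First}, for each $x\in (X,\Delta)$ in $\cS$, the hypothesis $\mldk(x,X,\Delta)\le A$ produces a Kollár component $S$ over $x$ with log discrepancy $A_{X,\Delta}(S)\le A+1$. Let $\pi:Y\to X$ be the plt blowup extracting $S$; adjunction gives $-S|_S=-(K_S+\Delta_S)/A_{X,\Delta}(S)$, so
\[
\hvol(x,X,\Delta)\ \le\ \hvol(\ord_S)\ =\ A_{X,\Delta}(S)\cdot\bigl(-(K_S+\Delta_S)\bigr)^{n-1}.
\]
Combining with $\hvol\ge\varepsilon$ yields $\bigl(-(K_S+\Delta_S)\bigr)^{n-1}\ge \varepsilon/(A+1)$. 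One thus obtains a set of $(n-1)$-dimensional log Fano pairs $(S,\Delta_S)$ with coefficients in a finite set (determined by $I$ and the discrete possibilities for $A_{X,\Delta}(S)$) and anti-log-canonical volume uniformly bounded below.

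\emph{Second}, I need to upgrade this collection of log Fanos to a bounded family. This is where I expect the real difficulty: $(S,\Delta_S)$ is not a priori K-semistable, so Jiang's boundedness of K-semistable Fanos \cite{Jia-Kss-Fano-bdd} does not apply directly. The plan is to exploit the K-semistability of the log Fano cone $(X,\Delta,\xi)$ through the stable degeneration theory \cite{XZ-SDC}, choosing the Kollár component $S$ compatibly with the Reeb valuation $\xi$ (e.g.\ $T$-equivariantly, or via a rational approximation of $\xi$). Running an equivariant MMP on $Y$ should then degenerate $(S,\Delta_S)$ to a K-semistable log Fano $(S_0,\Delta_{S_0})$ of the same dimension and anti-canonical volume; \cite{Jia-Kss-Fano-bdd} gives a bounded family containing all such $(S_0,\Delta_{S_0})$, and boundedness of $(S,\Delta_S)$ follows because limits of bounded families remain in bounded families.

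\emph{Third}, once the Kollár components $(S,\Delta_S)$ lie in a bounded family, I reconstruct the germ of $(X,x)$: the data $(S,\Delta_S,\cO_S(-S))$ determines $\pi:Y\to X$ as the contraction given by the section ring $\bigoplus_{m\ge 0} H^0(S,\cO_S(-mS))$, with $\Delta$ recovered from $\Delta_S$ via the different. Since $\cO_S(-S)$ is ample with bounded intersection against $-(K_S+\Delta_S)$, it lies in a bounded part of $\Pic(S)$, and packaging this construction in families produces a bounded family containing $\cS$. The main obstacle throughout is Step 2: bridging the gap between a Kollár component witnessing merely $\mldk\le A$ and the K-semistability of the log Fano cone, so that the boundedness of \cite{Jia-Kss-Fano-bdd} can be brought to bear. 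I expect the bulk of the technical work to go into constructing the equivariant degeneration to $(S_0,\Delta_{S_0})$ and controlling its invariants.
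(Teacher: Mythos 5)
Your Step 1 is sound (up to the minor point that the infimum defining $\mldk$ is attained, so one may take a Koll\'ar component $S$ with $A_{X,\Delta}(S)\le A$): the inequality $\varepsilon\le \hvol_{X,\Delta}(\ord_S)=A_{X,\Delta}(S)\cdot\bigl(-(K_S+\Delta_S)\bigr)^{n-1}$ does give a lower bound on $\bigl(-(K_S+\Delta_S)\bigr)^{n-1}$, and the Cartier index and coefficients of $(S,\Delta_S)$ are controlled as in Lemma~\ref{lem:Cartier index on plt blowup}. The ``only if'' direction is also correct in outline.

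The gap is in Step 2, and it is fatal as stated. You claim that after degenerating $(S,\Delta_S)$ to a K-semistable log Fano $(S_0,\Delta_{S_0})$ lying in a bounded family, ``boundedness of $(S,\Delta_S)$ follows because limits of bounded families remain in bounded families.'' This is backwards: boundedness does not ascend from a special fiber to the general fiber. A one-parameter family $(S_t,\Delta_{S_t})$ with unbounded general fiber can degenerate to a fixed special object, so knowing $(S_0,\Delta_{S_0})$ is in a bounded family imposes no constraint at all on $(S,\Delta_S)$. This is exactly why the literature distinguishes ``special boundedness'' (the degenerations are bounded) from genuine boundedness; your argument silently assumes the implication that the whole theorem is designed to prove. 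A second difficulty: a Koll\'ar component $S$ witnessing $\mldk\le A$ has no a priori relation to the Reeb vector $\xi$ or the minimizing valuation of $\hvol_{X,\Delta}$, which is typically non-divisorial, so there is no natural ``$\xi$-compatible'' choice of $S$, and the equivariant MMP you envision has no clear starting point.

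The paper takes a different route that sidesteps bounding $S$ entirely. It perturbs $\xi$ to a quasi-regular polarization, realizes $x\in(X,\Delta)$ as an affine orbifold cone $C_a(V,L)$, and works with the projective orbifold cone $(\oX,\oDelta)$ and its divisor at infinity $V_\infty$. The key identity (Lemma~\ref{lem:Sasaki vol as global vol}) is $\hvol_{X,\Delta}(\ord_{V_0})=\vol\bigl(-(K_{\oX}+\oDelta+V_\infty)\bigr)$, which by K-semistability and Lemma~\ref{lem:vol<=n^n} is pinned between $\varepsilon$ and $n^n$. The technical heart is Proposition~\ref{prop:bir bdd proj cone}: an effective birationality statement for $|-m(K_{\oX}+\oDelta+V_\infty)|$ with $m=m(n,\varepsilon,A,I)$, proved via Nadel vanishing for Weil divisors, construction of isolated non-klt centers at the vertex, and an Izumi-type estimate that is exactly where the hypothesis $\mldk\le A$ enters through Lemma~\ref{lem:alpha gives Izumi const}. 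Birational boundedness of $(\oX,\oDelta)$ then upgrades to boundedness of the polarized cone by using the $\bG_m$-action. If you wish to pursue your Koll\'ar component approach, you would need a genuinely new argument that bounds $(S,\Delta_S)$ directly, not merely its K-semistable degeneration; the paper's effective birationality can be viewed as supplying exactly that missing ingredient, one dimension up.
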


This upgrades the special boundedness result from our previous work \cite{Z-mld^K-1} to actual boundedness. We also prove boundedness results for more general log Fano cone singularities, replacing the K-semistability requirement by lower bounds on stability thresholds (as introduced in \cite{Hua-thesis}), see Corollary \ref{cor:cone bdd delta bdd below}.

Comparing Conjecture \ref{conj:K-ss cone bdd} and Theorem \ref{main:K-ss cone bdd}, we are naturally led to the following conjecture, already raised in \cite{Z-mld^K-1}:

\begin{conj}[{\cite[Conjecture 1.7]{Z-mld^K-1}}] \label{conj:bdd for mldK, vol>epsilon}
Let $n\in\bN$, $\varepsilon>0$ and let $I\subseteq [0,1]$ be a finite set. Then there exists some constant $A$ depending only on $n,\varepsilon,I$ such that 
\[
\mldk(x,X,\Delta)\le A
\]
for any $n$-dimensional klt singularity $x\in (X,\Delta)$ with $\Coef(\Delta)\subseteq I$ and $\hvol(x,X,\Delta)\ge \varepsilon$.
\end{conj}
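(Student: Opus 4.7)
My plan is to combine the Stable Degeneration Theorem with an induction on the dimension $n$, using Jiang's boundedness of K-semistable Fanos \cite{Jia-Kss-Fano-bdd} in dimension $n-1$ as the key external input.

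\textbf{Step 1: reduction to K-semistable log Fano cones.} Given $x\in (X,\Delta)$ klt with $\hvol(x,X,\Delta) \geq \varepsilon$, its stable degeneration $o \in (X_0,\Delta_0)$ is a K-semistable log Fano cone singularity with $\hvol(o,X_0,\Delta_0) = \hvol(x,X,\Delta)$ and $\Coef(\Delta_0) \subseteq I$. To reduce the problem I would show that every $T$-equivariant Kollár component $E_0$ over $o$, arising from a rational ray in the Reeb cone of $(X_0,\Delta_0)$, lifts along the degeneration to a Kollár component $E$ over $x$ with $A_{X,\Delta}(E) \leq A_{X_0,\Delta_0}(E_0)$; this uses the $\bG_m$-equivariance of the degeneration together with deformation invariance of log discrepancies in flat families of klt pairs. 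Since, by the equivariant K-stability machinery, a $T$-equivariant Kollár component nearly computing $\mldk(o,X_0,\Delta_0)$ always exists, this yields $\mldk(x,X,\Delta) \leq \mldk(o,X_0,\Delta_0) + \eta$ for any $\eta > 0$, reducing the conjecture to the case of K-semistable log Fano cone singularities.

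\textbf{Step 2: bound mldK on K-semistable log Fano cones.} Assume inductively that the conjecture holds in dimensions less than $n$. In the rank-one case, $X_0 = C(V,L)$ is a classical cone over a K-semistable log Fano $(V, \Delta_V)$ of dimension $n-1$ with $-(K_V+\Delta_V) = rL$; the exceptional divisor of the blowup of the vertex is a Kollár component isomorphic to $V$ with log discrepancy $r$, so $\mldk(o,X_0,\Delta_0) \leq r$. From the identity $\hvol(o,X_0,\Delta_0) = r \cdot (-K_V-\Delta_V)^{n-1}$ and Fujita's bound $(-K_V-\Delta_V)^{n-1} \leq n^{n-1}$, one obtains $r \geq \varepsilon / n^{n-1}$; conversely, K-semistability of the cone constrains $r$ in terms of the delta invariant $\delta(V,\Delta_V)$, and combined with the fact that $(V,\Delta_V)$ lies in a bounded family via Jiang's theorem (once its anticanonical volume is bounded below), we obtain an upper bound on $r$, hence on $\mldk(o,X_0,\Delta_0)$. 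In higher torus rank, I would argue by induction on the rank of $T$, reducing via iterated equivariant degenerations to the rank-one case, again appealing to the inductive hypothesis on dimension.

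\textbf{Main obstacle.} The critical point is the lift in Step 1: the specialization of a Kollár component along the stable degeneration need not itself be a Kollár component on the central fiber, so the lift must proceed in the opposite direction and calls for a careful equivariant deformation argument building on the $T$-equivariant Kollár component machinery developed in \cite{Z-mld^K-1} and the present paper. A secondary obstacle in Step 2 is that the volume lower bound on $(V,\Delta_V)$ needed to invoke Jiang's theorem depends on an upper bound on $r$, which is exactly what we are trying to prove; closing this loop likely requires a quantitative form of the K-semistable cone inequality relating $r$ to $\delta(V,\Delta_V)$, perhaps combined with Shokurov-type boundedness of complements to control the denominators of $r$ in terms of $n$ and $I$.
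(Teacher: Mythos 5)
The statement you are trying to prove appears in the paper as an open \emph{conjecture}, not a theorem. The paper never proves it: it records (Proposition~\ref{prop:mld^K bdd, vol>epsilon, dim=3}, citing \cite{Z-mld^K-1}) that it holds in dimension at most three, and otherwise \emph{assumes} an $\mldk$ upper bound as one of the hypotheses of the main boundedness theorem (Theorem~\ref{thm:polarized cone bdd}). So a complete argument here would be a genuinely new result that the paper does not supply, and your proposal does not close the gap either.

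Both obstacles you flag are real, and neither is cosmetic. In Step~1, lifting a $T$-equivariant Koll\'ar component $E_0$ over the central fiber $X_0$ back to a Koll\'ar component over $X$ is not a deformation-theoretic triviality: the central fiber of a stable degeneration carries a \emph{strictly larger} torus action than $X$ (that extra symmetry is the whole point of the degeneration), so a $T$-equivariant prime divisor over $X_0$ has no reason to spread out to nearby fibers. ``Deformation invariance of log discrepancies'' is only usable once you have a family of divisors over the total space, which is exactly what is missing; no such mechanism is supplied. In Step~2, beyond the circularity you acknowledge, a quasi-regular K-semistable log Fano cone is an \emph{orbifold} cone over a polarized pair $(V,\Delta_V+\Delta_L;L)$, and the orbifold boundary $\Delta_L$ has coefficients of the form $\tfrac{b-1}{b}$ with $b$ a priori unbounded, hence not contained in any finite set determined by $I$; Jiang's theorem with fixed coefficient set therefore does not apply. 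Moreover --- and this is the obstruction the introduction of the paper spells out explicitly --- $\vol(-(K_V+\Delta_V+\Delta_L))$ is only a fraction of $\hvol(x,X,\Delta)$, with the fraction governed by the unbounded Weil index, so it can be arbitrarily small even when the local volume is bounded below. This is precisely why the paper abandons the ``bound the orbifold base'' strategy in favour of effective birationality of the projective orbifold cone (Proposition~\ref{prop:bir bdd proj cone}), and even that route yields boundedness only \emph{given} an $\mldk$ bound as input; it does not produce one.
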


This is known in dimension up to three \cite{Z-mld^K-1}. By Theorem \ref{main:K-ss cone bdd}, we then get a complete solution of Conjecture \ref{conj:K-ss cone bdd} in dimension three (the surface case was already treated in \cite{HLQ-vol-ACC}). The same result is also independently proved in \cite{LMS-bdd-dim-3} using a different method.

\begin{cor}[=Corollaries \ref{cor:3-dim Kss cone bdd}+\ref{cor:vol discrete in dim 3}]
Conjecture \ref{conj:K-ss cone bdd} holds in dimension $3$. Moreover, for any fixed finite coefficient set $I\subseteq [0,1]$, the set of possible local volumes of $3$-dimensional klt singularities is discrete away from zero.
\end{cor}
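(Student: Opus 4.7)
The plan is to combine Theorem \ref{main:K-ss cone bdd} with the known cases of Conjecture \ref{conj:bdd for mldK, vol>epsilon} and the Stable Degeneration Theorem, so that both parts of the corollary follow by a short reduction.

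For the first statement, let $\cS$ be a set of $3$-dimensional K-semistable log Fano cone singularities with $\Coef(\Delta)\subseteq I$ (finite) and $\hvol(x,X,\Delta)\ge \varepsilon$. The dimension-$\le 3$ case of Conjecture \ref{conj:bdd for mldK, vol>epsilon} (established in \cite{Z-mld^K-1}) immediately supplies a uniform constant $A$ with $\mldk(x,X,\Delta)\le A$ on $\cS$. The two bounds $\hvol\ge\varepsilon$ and $\mldk\le A$ are exactly the input needed to invoke Theorem \ref{main:K-ss cone bdd}, which yields boundedness of $\cS$.

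For the discreteness statement I would argue by contradiction. Suppose the set of local volumes of $3$-dimensional klt singularities with $\Coef(\Delta)\subseteq I$ has an accumulation point $v_*>0$, and choose a sequence $(x_i\in X_i,\Delta_i)$ with pairwise distinct local volumes $v_i\to v_*$. By the Stable Degeneration Theorem, each $(x_i\in X_i,\Delta_i)$ admits a canonical isotrivial degeneration to a K-semistable log Fano cone singularity $(x_i^0\in X_i^0,\Delta_i^0)$. Isotrivial degeneration preserves boundary coefficients and the local volume, so $\Coef(\Delta_i^0)\subseteq I$ and $\hvol(x_i^0,X_i^0,\Delta_i^0)=v_i\ge v_*/2$ for all $i\gg 0$. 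By the first part of the corollary, these degenerations form a bounded family. To finish, I would invoke constructibility of the local volume in a bounded family of K-semistable log Fano cones: organizing the $(x_i^0\in X_i^0,\Delta_i^0)$ into a family over a finite-type base $T$, the assignment $t\mapsto \hvol$ is constructible and hence takes only finitely many values on $T$, contradicting the pairwise distinctness of the $v_i$.

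The heavy lifting is already done by Theorem \ref{main:K-ss cone bdd}, so Part 1 is essentially automatic. The only delicate point I anticipate is in Part 2, namely verifying that once the K-semistable log Fano cones have been packaged into a bounded family over a finite-type base, the torus actions and the K-semistable Reeb vector fields can be arranged to vary algebraically (up to stratifying and finite covers), so that $\hvol$ is genuinely a constructible function on the base. This is the kind of fact that is standard in the K-moduli framework for log Fano cones, but it is the step where a careful reference or short argument will be required.
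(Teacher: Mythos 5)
Your proposal follows essentially the same route as the paper: Part 1 is exactly Corollary \ref{cor:K-ss cone bdd} combined with the dimension-$3$ case of Conjecture \ref{conj:bdd for mldK, vol>epsilon} (recorded in the paper as Proposition \ref{prop:mld^K bdd, vol>epsilon, dim=3}), and Part 2 is the Stable Degeneration Theorem plus boundedness plus constructibility of the local volume in families. The paper phrases Part 2 directly rather than by contradiction, but the ingredients are identical.

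One remark on the ``delicate point'' you flag at the end. Constructibility of the local volume function is a statement about $\bR$-Gorenstein families of klt singularities (the reference is \cite[Theorem 1.3]{Xu-quasi-monomial}, with the real-coefficient extension in \cite[Theorem 3.5]{HLQ-vol-ACC}); it concerns $\hvol$ as an invariant of the fiber singularity $b\in(\cX_b,\cD_b)$ and does not involve the Reeb vector or any choice of polarization. Moreover, ``bounded'' for a set of log Fano cone singularities is defined in the paper (Definition \ref{defn:cone bdd}) so that the singularities already fit into finitely many $\bR$-Gorenstein families with fiberwise good torus actions. Thus once Part 1 gives boundedness, you can apply the constructibility result directly to those families; there is no extra work needed to make the torus actions or Reeb vectors vary algebraically for the purposes of Part 2. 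The compatibility you worry about is real content, but it is already packaged into Theorem \ref{thm:polarized cone bdd}'s proof, not something left to verify at this stage.
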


\subsection{Strategy of the proof}

In addition to our previous work \cite{Z-mld^K-1}, the proof of Theorem \ref{main:K-ss cone bdd} relies on several new ingredients. For simplicity, we assume $\Delta=0$, so that we are dealing with Fano cone singularities. Every such singularity $x\in X$ is an orbifold cone over some Fano variety $V$, so a natural idea is to prove Theorem \ref{main:K-ss cone bdd} by showing the boundedness of the associated Fano variety $V$. % If the singularity is K-semistable, then the Fano base $V$ can be chosen to have stability threshold (as define in \cites{FO-delta,BJ-delta}) bounded from below. 

There are two main reasons why this na\"ive approach does not directly work. First, the orbifold base is highly non-unique; in fact, for a fixed Fano cone singularity the possible orbifold bases can be unbounded. For example, the simplest Fano cone singularity $0\in \bA^n$ can be realized as an orbifold cone over any weighted projective space of dimension $n-1$, but without further constraint, weighted projective spaces do not form a bounded family.

Moreover, even when there is a canonical choice of the orbifold base (e.g. when the singularity has a unique $\bG_m$-action), the anti-canonical volume of $V$ is only a fraction of the local volume of the singularity, where the factor is related to the Weil index of $V$ (defined as the largest number $q$ such that $-K_V\sim_\bQ qL$ for some Weil divisor $L$). In particular, the volume of $V$ can a priori be arbitrarily small, which needs to be ruled out if we want any boundedness of this sort.

Our solution is to turn the local boundedness question into a global one by considering the projective orbifold cone $\oX$ over $V$. A key observation is that by choosing the appropriate orbifold base $V$, the anti-canonical volume of $\oX$ approximates the local volume of $x\in X$ and in particular is bounded from both above and below. This takes care of the second issue mentioned above.

Still, as we make different choices of $V$, the corresponding projective orbifold cones $\oX$ can be unbounded. To circumvent this issue, we prove an effective birationality result for $\oX$. More precisely, we show that regardless of the choice of $V$, there is some positive integer $m$ depending only on $\hvol(x,X)$ and $\mldk(x,X)$ such that $|-mK_{\oX}|$ induces a birational map that restricts to an embedding on $X$. This is the main technical part of the proof, and ultimately reduces to the construction of certain isolated non-klt centers and some careful analysis of certain Izumi type constants, see Section \ref{ss:bir bdd}. Once the effective birationality is established, it is fairly straightforward to conclude the boundedness of $X$.

\subsection{Structure of the article}

In Sections \ref{ss:kc}--\ref{ss:bdd defn}, we give the necessary background on $\mldk$, local volumes, log Fano cone singularities and their boundedness. In sections \ref{ss:real coef}--\ref{ss:part I}, we collect some useful results in previous works; these include results from \cite{HLS-epsilon-plt-blowup} that tackles pairs with real coefficients, as well as modifications of some results from the prequel \cite{Z-mld^K-1} of the present work. In Section \ref{s:proof}, we present and prove a more general boundedness statement for polarized log Fano cone singularities, Theorem \ref{thm:polarized cone bdd}. Our main Theorem \ref{main:K-ss cone bdd} will follow as an application of Theorem \ref{thm:polarized cone bdd}.

\subsection*{Acknowledgement}

The author is partially supported by the NSF Grants DMS-2240926, DMS-2234736, a Clay research fellowship, as well as a Sloan fellowship. He would like to thank Harold Blum, J\'anos Koll\'ar, Yuchen Liu and Chenyang Xu for helpful discussions and comments. He also likes to thank the referees for careful reading of the manuscript and several helpful suggestions.

\section{Preliminaries}

\subsection{Notation and conventions}

We work over an algebraically closed field $\bk$ of characteristic $0$. We follow the standard terminology from \cites{KM98,Kol13}.

A pair $(X,\Delta)$ consists of a normal variety $X$ together with an effective $\bR$-divisor $\Delta$ on $X$ (a priori, we do not require that $K_X+\Delta$ is $\bR$-Cartier). A singularity $x\in (X,\Delta)$ consists of a pair $(X,\Delta)$ and a closed point $x\in X$. We will always assume that $X$ is affine and $x\in \Supp(\Delta)$ (whenever $\Delta\neq 0$). 

Suppose that $X$ is a normal variety. A prime divisor $F$ on some birational model $\pi\colon Y\to X$ (where $Y$ is normal and $\pi$ is proper) of $X$ is called a \emph{divisor over} $X$. Given an $\bR$-divisor $\Delta$ on $X$, we denote its strict transform on the birational model $Y$ by $\Delta_Y$. If $K_X+\Delta$ is $\bR$-Cartier, the log discrepancy $A_{X,\Delta}(F)$ is defined to be
\[
A_{X,\Delta}(F):=1+\ord_F(K_Y-\pi^*(K_X+\Delta)).
\]

A \emph{valuation} over a singularity $x\in X$ is an $\bR$-valued valuation $v: K(X)^* \to \bR$ (where $K(X)$ denotes the function field of $X$) such that $v$ is centered at $x$ (i.e. $v(f)>0$ if and only if $f\in \fm_x$) and $v|_{\bk^*}=0$. The set of such valuations is denoted as $\Val_{X,x}$. Given $\lambda\in\bR$, the corresponding \emph{valuation ideal} $\fa_\lambda (v)$ is
\[
\fa_\lambda (v):=\{f\in \cO_{X,x}\mid v(f)\ge \lambda\}. 
\]

When we refer to a constant $C$ as $C=C(n,\varepsilon,\cdots)$ it means $C$ only depends on $n,\varepsilon,\cdots$.

\subsection{Koll\'ar components} \label{ss:kc}

We first recall some definitions related to klt singularities and Koll\'ar components.

\begin{defn}[{\cite[Definition 2.34]{KM98}}]
We say a pair $(X,\Delta)$ is klt if $K_X+\Delta$ is $\bR$-Cartier, and for any prime divisor $F$ over $X$ we have $A_{X,\Delta}(F)>0$. We say $x\in (X,\Delta)$ is a klt singularity if $(X,\Delta)$ is klt.
\end{defn}

\begin{defn}[\cite{Xu-pi_1-finite}]
Let $x\in (X,\Delta)$ be a klt singularity and let $E$ be a prime divisor over $X$. If there exists a proper birational morphism $\pi\colon Y\to X$ such that $E= \pi^{-1}(x)$ is the unique exceptional divisor, $(Y,E+\Delta_Y)$ is plt and  $-(K_Y+\Delta_Y+E)$ is $\pi$-ample, we call $E$ a \emph{Koll\'ar component} over $x\in (X,\Delta)$ and $\pi\colon Y\to X$ the plt blowup of $E$.
\end{defn}

By adjunction, we may write 
\[
(K_Y+\Delta_Y+E)|_E = K_E+\Delta_E
\]
for some effective divisor $\Delta_E=\Diff_E(\Delta_Y)$ (called the different) on $E$, and $(E,\Delta_E)$ is a klt log Fano pair.

\begin{defn}[\cite{Z-mld^K-1}]
Let $x\in (X,\Delta)$ be a klt singularity. The minimal log discrepancy of Koll\'ar components, denoted $\mldk (x,X,\Delta)$, is the infimum of the log discrepancies $A_{X,\Delta}(E)$ as $E$ varies among all Koll\'ar components over $x\in (X,\Delta)$.
\end{defn}

If $x\in (X,\Delta)$ is a klt singularity, then we can write $\Delta=\sum_{i=1}^r a_i \Delta_i$ as a convex combination of $\bQ$-divisors $\Delta_i$ such that each $(X,\Delta_i)$ is klt. Let $r>0$ be an integer such that $r(K_X+\Delta_i)$ is Cartier for all $i$. Then as $A_{X,\Delta}(F)=\sum_{i=1}^r a_i A_{X,\Delta_i}(F)$, the possible values of log discrepancies $A_{X,\Delta}(F)$ belong to the discrete set $\left\{\left.\frac{1}{r}\sum_{i=1}^r a_i m_i\,\right|\, m_i\in\bN\right\}$. This implies that the infimum in the above definition is also a minimum.

The following result will be useful later. Recall that the log canonical threshold $\lct(X,\Delta;D)$ of an effective $\bR$-Cartier divisor $D$ with respect to a klt pair $(X,\Delta)$ is the largest number $t\ge 0$ such that $(X,\Delta+tD)$ is klt, and the $\alpha$-invariant of a log Fano pair $(X,\Delta)$ is defined as the infimum of the log canonical thresholds $\lct(X,\Delta;D)$ where $0\le D\sim_\bR -(K_X+\Delta)$. The log canonical threshold $\lct_x(X,\Delta;D)$ at a closed point $x\in X$ is defined analogously.

\begin{lem} \label{lem:alpha gives Izumi const}
Let $E$ be a Koll\'ar component over a klt singularity $x\in (X,\Delta)$. Then for any effective $\bR$-Cartier divisor $D$ on $X$, we have
\[
\lct_x(X,\Delta;D)\ge \min\{1,\alpha(E,\Delta_E)\} \cdot \frac{A_{X,\Delta}(E)}{\ord_E(D)}.
\]
\end{lem}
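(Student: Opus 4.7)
The plan is to work on the plt blowup $\pi\colon Y\to X$ of the Koll\'ar component $E$ and to reduce the statement to a log canonical threshold estimate on $E$ via inversion of adjunction. Set $a_E:=A_{X,\Delta}(E)$. For a parameter $t\ge 0$ I first write
\[
\pi^*(K_X+\Delta+tD)=K_Y+\Delta_Y+tD_Y+cE,\qquad c:=1-a_E+t\,\ord_E(D),
\]
using $K_Y+\Delta_Y+E=\pi^*(K_X+\Delta)+a_E E$. By definition, $\lct_x(X,\Delta;D)\ge t$ will follow once I show that $(Y,\Delta_Y+tD_Y+cE)$ is klt in a neighborhood of $E$.

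Next I would translate this into a condition on $E$ itself. Since $E$ is the unique $\pi$-exceptional divisor and $c<1$ whenever $t<a_E/\ord_E(D)$, a standard interpolation argument shows that it suffices to prove $(Y,\Delta_Y+tD_Y+E)$ is plt near $E$. By Kawakita's inversion of adjunction (and normality of $E$ coming from the Koll\'ar component hypothesis), this is in turn equivalent to the klt property of
\[
(E,\Delta_E+tD_Y|_E).
\]
To control this I use adjunction: restricting the identity $K_Y+\Delta_Y+E=\pi^*(K_X+\Delta)+a_E E$ to $E$ gives $K_E+\Delta_E=a_E\,E|_E$, because $\pi(E)=x$ forces $\pi^*(K_X+\Delta)|_E\equiv 0$. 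Combining this with $D_Y=\pi^*D-\ord_E(D)\,E$ yields
\[
D_Y|_E\sim_\bR -\ord_E(D)\,E|_E \sim_\bR \frac{\ord_E(D)}{a_E}\bigl(-(K_E+\Delta_E)\bigr).
\]

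From here the definition of the $\alpha$-invariant of the log Fano pair $(E,\Delta_E)$ gives
\[
\lct(E,\Delta_E;D_Y|_E)\ge \alpha(E,\Delta_E)\cdot\frac{a_E}{\ord_E(D)}
\]
by the usual rescaling trick (write $D_Y|_E=\lambda D''$ with $D''\sim_\bR-(K_E+\Delta_E)$ and $\lambda=\ord_E(D)/a_E$). Therefore whenever
\[
t<\min\{1,\alpha(E,\Delta_E)\}\cdot\frac{a_E}{\ord_E(D)},
\]
both $c<1$ and $(E,\Delta_E+tD_Y|_E)$ klt hold, which chains backwards through inversion of adjunction to give $(X,\Delta+tD)$ klt at $x$. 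Taking the supremum over such $t$ produces the desired inequality.

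I do not anticipate a serious obstacle: the only points that require care are the passage from the plt condition on $(Y,\Delta_Y+tD_Y+E)$ with coefficient $1$ on $E$ back to the klt condition with coefficient $c<1$ (handled by interpolating log discrepancies since $E$ is the only exceptional divisor), and ensuring that the $\bR$-linear proportionality between $D_Y|_E$ and $-(K_E+\Delta_E)$ is applied correctly when invoking the $\alpha$-invariant.
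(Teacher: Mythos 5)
Your proof is correct and follows essentially the same route as the paper: pass to the plt blowup of $E$, observe that $D_Y|_E$ is an $\bR$-multiple of $-(K_E+\Delta_E)$ because $\pi^*D|_E\sim_\bR 0$, invoke the $\alpha$-invariant of the Koll\'ar component, and apply inversion of adjunction to pull the log-canonicity back to $Y$ and then to $X$. The only minor points worth noting are that the inversion of adjunction needed here is the plt/klt version (e.g.\ \cite[Theorem 5.50]{KM98}) rather than Kawakita's lc statement, and that the paper streamlines the interpolation step by evaluating at the single value $t=A_{X,\Delta}(E)/\ord_E(D)$ (so that $E$ appears with coefficient exactly $1$ in the crepant pullback) and then rescaling $D$ by $\alpha\le 1$, which drops the coefficient of $E$ to $s\le 1$ and lets one work directly with lc rather than klt.
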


\begin{proof}
Let $\alpha = \min\{1,\alpha(E,\Delta_E)\}$, let $t=\frac{A_{X,\Delta}(E)}{\ord_E(D)}$ and let $\pi\colon Y\to X$ be the plt blowup of $E$. Then we have
\[
\pi^*(K_X+\Delta+tD) = K_Y+\Delta_Y+tD_Y+E,
\]
which gives $tD_Y|_E\sim_\bR -(K_Y+\Delta_Y+E)|_E \sim_\bR -(K_E+\Delta_E)$. By the definition of alpha invariants, the pair $(E,\Delta_E+\alpha tD_Y|_E)$ is log canonical, hence by inversion of adjunction \cite[Theorem 5.50]{KM98} we know that $(Y,\Delta_Y+ \alpha tD_Y+E)$ is lc around $E$. As $\alpha\le 1$, we also have
\[
\pi^*(K_X+\Delta+\alpha tD) = K_Y+\Delta_E+\alpha tD_Y+sE
\]
for some $s\le 1$, thus by the above discussion we deduce that $(Y,\Delta_Y+ \alpha tD_Y+sE)$ is sub-lc around $E$, and hence $(X,\Delta+\alpha tD)$ is lc at $x$. In other words, $\lct_x(X,\Delta;D)\ge \alpha t$ as desired.
\end{proof}

\subsection{Local volumes} \label{ss:local vol}

We next briefly recall the definition of the local volumes of klt singularities \cite{Li-normalized-volume}. Let $x\in (X,\Delta)$ be a klt singularity and let $n=\dim X$. The \emph{log discrepancy} function
\[
A_{X,\Delta}\colon \Val_{X,x}\to \bR \cup\{+\infty\},
\]
is defined as in \cite{JM-val-ideal-seq} and \cite[Theorem 3.1]{BdFFU-log-discrepancy}. It generalizes the usual log discrepancies of divisors; in particular, for divisorial valuations, i.e. valuations of the form $\lambda\cdot \ord_F$ where $\lambda>0$ and $F$ is a divisor over $X$, we have 
\[
A_{X,\Delta}(\lambda\cdot \ord_F)=\lambda\cdot A_{X,\Delta}(F).
\]
We denote by $\Val^*_{X,x}$ the set of valuations $v\in\Val_X$ with center $x$ and  $A_{X,\Delta}(v)<+\infty$. The \emph{volume} of a valuation $v\in\Val_{X,x}$ is defined as
\[
\vol(v)=\vol_{X,x}(v)=\limsup_{m\to\infty}\frac{\ell(\cO_{X,x}/\fa_m(v))}{m^n/n!}.
\] 

\begin{defn} \label{defn:local volume}
Let $x\in (X,\Delta)$ be an $n$-dimensional klt singularity. For any $v\in \Val^*_{X,x}$, we define the \emph{normalized volume} of $v$ as
\[
\hvol_{X,\Delta}(v):=A_{X,\Delta}(v)^n\cdot\vol_{X,x}(v).
\]
The \emph{local volume} of $x\in (X,\Delta)$ is defined as
\[
  \hvol(x,X,\Delta):=\inf_{v\in\Val^*_{X,x}} \hvol_{X,\Delta}(v).
\]
\end{defn}

By \cite[Theorem 1.2]{Li-normalized-volume}, the local volume of a klt singularity is always positive. We will frequently use the following properties of local volumes.

\begin{lem}[{\cite[Theorem 1.6]{LX-cubic-3fold}}] \label{lem:vol<=n^n}
Let $x\in(X,\Delta)$ be a klt singularity of dimension $n$. Then $\hvol(x,X,\Delta)\le n^n$.
\end{lem}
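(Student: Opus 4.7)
The plan is to exhibit a single valuation $v\in \Val^*_{X,x}$ realizing $\hvol_{X,\Delta}(v)\leq n^n$. The natural candidate is $v=\ord_E$ for a Koll\'ar component $E$ over $x\in (X,\Delta)$, whose existence is guaranteed by Xu's results. Write $A=A_{X,\Delta}(E)$ and let $\pi\colon Y\to X$ be the associated plt blowup.

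The first key computation is a volume formula for this divisorial valuation. Since $-E$ is $\pi$-ample and $\fa_m(\ord_E)=\pi_*\cO_Y(-mE)$, the standard short exact sequences
\[
0\to \cO_Y(-(m+1)E)\to \cO_Y(-mE)\to \cO_E(-mE|_E)\to 0
\]
together with asymptotic Riemann--Roch on $E$ give $\vol(\ord_E)=(-E|_E)^{n-1}$. The second ingredient is an adjunction computation: since $\pi(E)=\{x\}$, we have $\pi^*(K_X+\Delta)|_E\equiv 0$ numerically, and combined with $K_Y+\Delta_Y+E=\pi^*(K_X+\Delta)+A\cdot E$ this yields $-E|_E=\tfrac{1}{A}(-K_E-\Delta_E)$ inside $N^1(E)_\bR$. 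Together these produce the clean identity
\[
\hvol_{X,\Delta}(\ord_E)\;=\;A^n\vol(\ord_E)\;=\;A\cdot (-K_E-\Delta_E)^{n-1}.
\]

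It remains to choose $E$ so that the right-hand side is at most $n^n$. Using the Stable Degeneration Theorem (cited in the introduction), I would arrange that $(E,\Delta_E)$ is a K-semistable log Fano pair of dimension $n-1$. For such a pair, K.~Fujita's volume bound gives $(-K_E-\Delta_E)^{n-1}\leq n^{n-1}$, and a Kobayashi--Ochiai-type Fano index estimate for K-semistable Fanos gives $A\leq n$; together these yield $\hvol(\ord_E)\leq n\cdot n^{n-1}=n^n$ and therefore $\hvol(x,X,\Delta)\leq n^n$.

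The main obstacle is securing a Koll\'ar component satisfying both bounds simultaneously; the cleanest route goes through the Stable Degeneration Theorem, which is substantial input. An alternative approach bypasses stable degeneration by using Liu's formula $\hvol(x,X,\Delta)=\inf_{\fa \text{ }\fm_x\text{-primary}}\lct(X,\Delta;\fa)^n\cdot e(\fa)$ with $\fa=\fm_x$, reducing the problem to a klt analogue of the de~Fernex--Ein--Musta\c{t}\u{a} inequality $\lct(\fm_x)^n\cdot e(\fm_x)\leq n^n$. The latter can be proved by degenerating $(X,\Delta)$ to its tangent cone and invoking lower-semicontinuity of log canonical thresholds together with conservation of Hilbert--Samuel multiplicities under such degenerations.
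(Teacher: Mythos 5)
The lemma is quoted from \cite{LX-cubic-3fold}, so the paper itself supplies no proof; the comparison must therefore be against the argument in that reference, which follows your ``alternative approach'': apply Liu's minimizing formula $\hvol(x,X,\Delta)=\inf_{\fa}\lct(X,\Delta;\fa)^n e(\fa)$ with $\fa=\fm_x$ and then establish the klt analogue of the de~Fernex--Ein--Musta\c{t}\u{a} inequality $\lct_x(X,\Delta;\fm_x)^n\, e(\fm_x)\le n^n$ by degeneration to the tangent cone. That route is correct and is the intended one.

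Your main approach, by contrast, has a genuine gap at the step ``a Kobayashi--Ochiai-type Fano index estimate for K-semistable Fanos gives $A\le n$.'' The Kobayashi--Ochiai index bound controls the largest $q$ with $-K_E-\Delta_E\sim_{\bQ} qM$ for an \emph{integral Weil} divisor $M$. Here $L=-E|_E$ is only a $\bQ$-divisor coming from the Seifert structure, and it is in general \emph{not} an integral class; so $A$, the ratio of $-K_E-\Delta_E$ to $L$, can be strictly larger than $n$. A concrete counterexample in dimension $n=2$: take $X=\bA^2$, $\Delta=\tfrac13\cdot(\text{a coordinate line through }0)$, and let $E$ be the exceptional divisor of the $(3,2)$-weighted blowup. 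Then $(E,\Delta_E)\cong(\bP^1,\tfrac23 p_1+\tfrac23 p_2)$ is K-semistable and $\ord_E$ computes $\hvol(x,X,\Delta)=\tfrac83$; here $L=-E|_E$ has degree $\tfrac16$, so $A=A_{X,\Delta}(E)=4>2=n$, while $\vol(-K_E-\Delta_E)=\tfrac23$. The lemma's conclusion $\hvol\le n^n$ still holds, but only because $\vol(-K_E-\Delta_E)$ is well below the Fujita bound $n^{n-1}$; the two factors $A$ and $\vol$ cannot be bounded independently, only their product. Since $A\le n$ is false, the product estimate $A\cdot\vol\le n\cdot n^{n-1}$ does not follow from your two cited inequalities, and this strategy does not close.

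There is also a secondary gap: a Koll\'ar component $E$ with K-semistable $(E,\Delta_E)$ exists only when the normalized-volume minimizer is divisorial (equivalently, the K-semistable degeneration is quasi-regular). In the irregular case there is no K-semistable Koll\'ar component, and one would need an approximation argument in the Reeb cone; your write-up does not address this. Your final paragraph (Liu's formula plus the klt dFEM inequality) is the sound route and coincides with how the cited result is actually proved.
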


\begin{lem}[{\cite[Corollary 1.4]{XZ-minimizer-unique}}] \label{lem:index bound via volume}
Let $x\in(X,\Delta)$ be a klt singularity of dimension $n$ and let $D$ be a $\bQ$-Cartier Weil divisor on $X$. Then the Cartier index of $D$ is at most $\frac{n^n}{\hvol(x,X,\Delta)}$.
\end{lem}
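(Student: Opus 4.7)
The plan is to reduce the claim to the universal bound $\hvol \le n^n$ of Lemma \ref{lem:vol<=n^n} via a cyclic cover construction. Let $r$ be the Cartier index of $D$ at $x$, that is, the order of the class of $D$ in the local class group $\Cl(\cO_{X,x})$. I would first form the index-one cover of $X$ associated to $D$, namely the cyclic $\bZ/r$-cover
\[
\pi\colon \tilde X \,:=\, \Spec_X \bigoplus_{i=0}^{r-1} \cO_X(iD) \longrightarrow X,
\]
obtained after choosing a trivialization of $\cO_X(rD)$ near $x$. The minimality of $r$ ensures that $\tilde X$ is irreducible in a neighborhood of $\pi^{-1}(x)$, so $\pi^{-1}(x)$ consists of a single point $\tilde x$. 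By construction $\pi$ is étale in codimension one, so $K_{\tilde X}+\tilde\Delta=\pi^*(K_X+\Delta)$ where $\tilde\Delta:=\pi^*\Delta$; in particular $(\tilde X,\tilde\Delta)$ is again klt at $\tilde x$ and has dimension $n$.

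Next I would invoke the multiplicativity of the local volume under a finite Galois cover that is étale in codimension one and has connected fiber over $x$, which is the content of \cite{XZ-minimizer-unique}: one has
\[
\hvol(\tilde x,\tilde X,\tilde\Delta)=\deg(\pi)\cdot\hvol(x,X,\Delta)=r\cdot\hvol(x,X,\Delta).
\]
The mechanism is that any valuation $v\in \Val^*_{X,x}$ lifts to a valuation $\tilde v\in \Val^*_{\tilde X,\tilde x}$ with $A_{\tilde X,\tilde\Delta}(\tilde v)=A_{X,\Delta}(v)$ (since $\pi$ is crepant and étale in codimension one) and with $\vol(\tilde v)=r\cdot\vol(v)$ (the valuation ideals have colength scaled by the generic degree $r$); conversely any $\tilde v\in \Val^*_{\tilde X,\tilde x}$ can be averaged over the Galois action to produce such a lift without increasing the normalized volume, and one then compares the infima that define the two local volumes.

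Finally, applying Lemma \ref{lem:vol<=n^n} to the $n$-dimensional klt singularity $\tilde x\in(\tilde X,\tilde\Delta)$ gives $\hvol(\tilde x,\tilde X,\tilde\Delta)\le n^n$, and combining this with the displayed identity yields exactly $r\le n^n/\hvol(x,X,\Delta)$. The main technical ingredient is the multiplicativity of $\hvol$ under finite Galois covers étale in codimension one, which is the substantive content of \cite{XZ-minimizer-unique}; granted that, the remaining pieces — setting up the index-one cover, checking that $\pi^{-1}(x)$ is a single point, and the universal bound $n^n$ — are standard.
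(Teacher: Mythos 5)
The paper does not prove this lemma itself; it simply cites \cite[Corollary 1.4]{XZ-minimizer-unique}, and your proposal correctly reconstructs the argument behind that reference: form the index-one cyclic cover $\pi\colon\tilde X\to X$ associated to $D$, observe that $\pi$ is quasi-\'etale of degree equal to the Cartier index with a single point $\tilde x$ over $x$, invoke the finite degree formula $\hvol(\tilde x,\tilde X,\tilde\Delta)=\deg(\pi)\cdot\hvol(x,X,\Delta)$ (which is indeed the substantive theorem of that paper), and finish with the universal bound $\hvol\le n^n$ of Lemma \ref{lem:vol<=n^n}. This is the same approach as the cited source; the one place your ``mechanism'' sketch oversimplifies is the descent direction, where averaging a valuation over the Galois group does not literally produce a valuation and the actual proof goes through the uniqueness of the normalized volume minimizer, but since you explicitly take the multiplicativity as a black box from \cite{XZ-minimizer-unique}, this does not affect the correctness of your reduction.
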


\subsection{Log Fano cone singularities} \label{ss:log Fano cone}

In this subsection we recall the definition of log Fano cone singularities and their K-semistability. These notions originally appear in the study of Sasaki-Einstein metrics \cites{CS-Kss-Sasaki,CS-Sasaki-Einstein} and is further explored in works related to the Stable Degeneration Conjecture \cites{LX-stability-higher-rank,LWX-metric-tangent-cone}.

\begin{defn}
Let $X=\Spec(R)$ be a normal affine variety and $\bT=\bG_m^r$ ($r>0$) an algebraic torus. We say that a $\bT$-action on $X$ is {\it good} if it is effective and there is a unique closed point $x\in X$ that is in the orbit closure of any $\bT$-orbit. We call $x$ the vertex of the $\bT$-variety $X$, and call the corresponding singularity $x\in X$ a $\bT$-singularity.
\end{defn}

Let $N:=N(\bT)=\Hom(\bG_m, \bT)$ be the co-weight lattice and $M=N^*$ the weight lattice. We have a weight decomposition 
\[
R=\oplus_{\alpha\in M} R_\alpha,
\]
and the action being good implies that $R_0=\bk$ and every $R_\alpha$ is finite dimensional. For $f\in R$, we denote by $f_\alpha$ the corresponding component in the above weight decomposition.

\begin{defn}
A \emph{Reeb vector} on $X$ is a vector $\xi\in N_\bR$ such that $\langle \xi, \alpha \rangle>0$ for all $0\neq \alpha\in M$ with $R_{\alpha}\neq 0$. The set $\ft^+_{\bR}$ of Reeb vectors is called the Reeb cone.
% , where
% \[
% \Gamma =\{ \alpha\in  M \mid R_{\alpha}\neq 0\}.
% \]
\end{defn}

For any $\xi\in \ft^+_{\bR}$, we can define a valuation $\wt_\xi$ (called a toric valuation) by setting
\[
\wt_\xi (f):=\min\{\langle \xi, \alpha \rangle\mid \alpha\in M, f_\alpha\neq 0\}
\]
where $f\in R$. It is not hard to verify that $\wt_\xi\in \Val_{X,x}$.

\begin{defn}
A log Fano cone singularities is a klt singularities that admits a nontrivial good torus action. A polarized log Fano cone singularity $x\in (X,\Delta;\xi)$ consists of a log Fano cone singularity $x\in (X,\Delta)$ together with a Reeb vector $\xi$ (called a polarization). % The \emph{Sasaki volume} of $x\in (X,\Delta;\xi)$ is defined to be $\hvol_{X,\Delta}(\wt_\xi)$.
\end{defn}

By abuse of convention, a good $\bT$-action on a klt singularity $x\in (X,\Delta)$ means a good $\bT$-action on $X$ such that $x$ is the vertex and $\Delta$ is $\bT$-invariant. Using terminology from Sasakian geometry, we say a polarized log Fano cone $x\in (X,\Delta;\xi)$ is \emph{quasi-regular} if $\xi$ generates a $\bG_m$-action (i.e. $\xi$ is a real multiple of some element of $N$); otherwise, we say that $x\in (X,\Delta;\xi)$ is \emph{irregular}.

We will often use the following result to perturb an irregular polarization to a quasi-regular one.

\begin{lem} \label{lem:vol continuous in Reeb cone}
The function $\xi\mapsto \hvol_{X,\Delta}(\wt_\xi)$ defined on the Reeb cone is continuous and has a minimum.
\end{lem}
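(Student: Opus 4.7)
The plan is to write $\hvol_{X,\Delta}(\wt_\xi) = A_{X,\Delta}(\wt_\xi)^n \cdot \vol_{X,x}(\wt_\xi)$ and analyze each factor separately as a function of $\xi \in \ft^+_\bR$. For the log discrepancy, since $(X,\Delta)$ is klt, some multiple $\ell(K_X+\Delta)$ is Cartier; because the $\bT$-action fixes $x$ and preserves $\Delta$, the invertible sheaf $\cO_X(\ell(K_X+\Delta))$ is $\bT$-equivariant in a neighborhood of $x$ and admits a $\bT$-eigensection generator of some weight $\alpha_\omega \in M$. This produces the linear (hence continuous) formula
\[
A_{X,\Delta}(\wt_\xi) = \langle \xi,\beta \rangle, \qquad \beta := \alpha_\omega/\ell \in M_\bQ.
\]
Positivity of $A_{X,\Delta}$ on the open Reeb cone then forces $\beta$ to lie in the interior of the moment cone $\sigma := \mathrm{cone}\{\alpha \in M : R_\alpha \neq 0\}$.

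For continuity of the volume, the $\bT$-weight decomposition of valuation ideals gives
\[
\fa_m(\wt_\xi) = \bigoplus_{\alpha:\,\langle\xi,\alpha\rangle \ge m} R_\alpha, \qquad \dim_\bk R/\fa_m(\wt_\xi) = \sum_{\alpha:\,\langle\xi,\alpha\rangle < m} \dim R_\alpha.
\]
The asymptotic as $m\to\infty$ is controlled by the index character $F(\xi,s) := \sum_\alpha \dim(R_\alpha) e^{-s\langle\xi,\alpha\rangle}$. A standard Laplace--Tauberian argument gives the expansion $F(\xi,s) \sim a_0(\xi)\,s^{-n}$ as $s\to 0^+$ with $\vol_{X,x}(\wt_\xi) = n!\,a_0(\xi)$, and identifies $a_0(\xi)$ with a Duistermaat--Heckman integral that is real-analytic (and convex) on $\ft^+_\bR$.

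To extract the minimum I exploit the scaling identity $\wt_{c\xi} = c\,\wt_\xi$ for $c>0$, giving $A_{X,\Delta}(\wt_{c\xi}) = c\,A_{X,\Delta}(\wt_\xi)$, $\vol_{X,x}(\wt_{c\xi}) = c^{-n}\vol_{X,x}(\wt_\xi)$, and hence $\hvol_{X,\Delta}(\wt_{c\xi}) = \hvol_{X,\Delta}(\wt_\xi)$. So it suffices to minimize on the slice $S := \{\xi \in \ft^+_\bR : \langle\xi,\beta\rangle = 1\}$, which is a bounded open convex subset of an affine hyperplane in $N_\bR$ since $\beta$ lies in the interior of $\sigma$ (so $\overline{S}$ is compact). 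The crucial point is that $\vol_{X,x}(\wt_\xi) \to \infty$ as $\xi \in S$ approaches $\partial S \subseteq \partial\overline{\ft^+_\bR}$: if $\xi_k \to \xi_0 \in \partial\overline{\ft^+_\bR}$ and $0 \neq \alpha \in \sigma$ satisfies $\langle\xi_0, \alpha\rangle = 0$, then the weight slice $\{\alpha' \in \sigma : \langle\xi_k, \alpha'\rangle \leq 1\}$ extends to infinity along the face $\sigma \cap \xi_0^\perp$, and a weighted lattice-point count on this slice shows $a_0(\xi_k)\to\infty$. Since $A_{X,\Delta}(\wt_\xi) = 1$ is fixed on $S$, also $\hvol_{X,\Delta}(\wt_\xi) \to \infty$ along $\partial S$, so any minimizing sequence in $S$ stays in a compact subset and its limit realizes the minimum.

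I expect the main obstacle to be the rigorous verification of the two asymptotic statements about the index character — continuity of $a_0(\xi)$ on $\ft^+_\bR$, and its blow-up at the boundary. Both are standard in the Sasaki--Einstein / log Fano cone literature, but they rely on a careful Tauberian argument and on controlling the Duistermaat--Heckman integral uniformly in $\xi$ as the relevant face of $\sigma$ degenerates; once these analytic inputs are secured, the remaining scaling and compactness steps are routine convex geometry.
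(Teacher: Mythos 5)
The paper proves this lemma by a one-line citation to Li--Xu, so your reconstruction via index characters and Duistermaat--Heckman asymptotics is a different (and fuller) exposition, though it follows the same circle of ideas as the cited source. The decomposition into linearity of $A_{X,\Delta}(\wt_\xi)$, continuity of $\vol(\wt_\xi)$, and scale-invariance plus boundary properness is the right skeleton.

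There is, however, a genuine logical gap in the compactness step. You assert that positivity of $A_{X,\Delta}(\wt_\xi)=\langle\xi,\beta\rangle$ on the \emph{open} Reeb cone (the interior of the dual cone $\sigma^\vee$) forces $\beta\in\mathrm{int}(\sigma)$. That inference is false: $\langle\xi,\beta\rangle>0$ for all $\xi\in\mathrm{int}(\sigma^\vee)$ is equivalent only to $\beta\in\sigma\setminus\{0\}$, not to $\beta\in\mathrm{int}(\sigma)$ (take $\sigma=\sigma^\vee=\bR_{\ge 0}^2$ and $\beta=(1,0)$). If $\beta$ sat on the boundary of $\sigma$, the slice $S=\{\xi\in\ft^+_\bR : \langle\xi,\beta\rangle=1\}$ would have non-compact closure and a minimizing sequence could escape to infinity along a direction kept bounded by $\langle\cdot,\beta\rangle$, so the extraction of a minimizer collapses. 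The correct input is that $A_{X,\Delta}(\wt_\xi)>0$ holds for every $\xi$ in the \emph{closed} cone $\sigma^\vee$ with $\xi\neq 0$: each such $\wt_\xi$ is still a nontrivial quasi-monomial valuation over $X$ (its center may be a subvariety strictly containing $x$), and $(X,\Delta)$ being klt forces a strictly positive log discrepancy for it. Positivity of $\langle\cdot,\beta\rangle$ on all of $\sigma^\vee\setminus\{0\}$ is precisely the statement $\beta\in\mathrm{int}(\sigma)$, which is what makes $\overline{S}$ compact. Once this is repaired, your index-character and boundary-blowup outline (which you rightly flag as the analytic heart) carries through.
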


\begin{proof}
This follows from \cite[Theorem 2.15(3) and Proposition 2.39]{LX-stability-higher-rank}.
\end{proof}

\begin{defn}
We say a polarized log Fano cone singularity $x\in (X,\Delta;\xi)$ is \emph{K-semistable} if 
\[
\hvol(x,X,\Delta)=\hvol_{X,\Delta}(\wt_\xi).
\]
% In other words, the local volume coincides with its Sasaki volume.
\end{defn}

This definition differs from the original ones from \cites{CS-Kss-Sasaki,CS-Sasaki-Einstein}, but they are equivalent by \cite[Theorem 2.34]{LX-stability-higher-rank}. For our purpose, the above definition is more convenient. Since the minimizer of the normalized volume function is unique up to rescaling \cite{XZ-minimizer-unique}, the polarization $\xi$ is essentially determined by the K-semistability condition and hence we often omit the polarization and simply say $x\in (X,\Delta)$ is K-semistable.

\begin{defn} \label{def:vol ratio}
The \emph{volume ratio} of a polarized log Fano cone singularity $x\in (X,\Delta;
\xi)$ is defined to be
\[
\Theta(X,\Delta;
\xi):=\frac{\hvol(x,X,\Delta)}{\hvol_{X,\Delta}(\wt_\xi)}.
\]
The \emph{volume ratio} of a log Fano cone singularity $x\in (X,\Delta)$ is defined to be
\[
\Theta(x,X,\Delta):=\sup_{\xi} \Theta(X,\Delta;\xi),
\]
where the supremum runs over all polarizations on $X$.
\end{defn}

By definition, $0<\Theta(X,\Delta;
\xi)\le 1$ and $\Theta(X,\Delta;
\xi)=1$ if and only if $x\in (X,\Delta;
\xi)$ is K-semistable. By Lemma \ref{lem:vol continuous in Reeb cone}, similar statement holds in the unpolarized case.

\subsection{Bounded family of singularities} \label{ss:bdd defn}

In this subsection we define boundedness of singularities and recall some properties of singularity invariants in bounded families. They will be useful in proving the easier direction of Theorem \ref{main:K-ss cone bdd}.

\begin{defn}
We call $B\subseteq (\cX,\cD)\to B$ an $\bR$-Gorenstein family of klt singularities (over a normal but possibly disconnected base $B$) if
\begin{enumerate}
    \item $\cX$ is flat over $B$, and $B\subseteq \cX$ is a section of the projection,
    \item For any closed point $b\in B$, $\cX_b$ is connected, normal and is not contained in $\Supp(\cD)$,
    \item $K_{\cX/B}+\cD$ is $\bR$-Cartier and $b\in (\cX_b,\cD_b)$ is a klt singularity for any $b\in B$.
\end{enumerate}
\end{defn}

\begin{lem} \label{lem:vol and mld^K bdd in family}
Let $B\subseteq (\cX,\cD)\to B$ be an $\bR$-Gorenstein family of klt singularities. Then there exist constants $\varepsilon,A>0$ such that 
\[
\hvol(b,\cX_b,\cD_b)\ge \varepsilon \quad \mathrm{and}\quad \mldk(b,\cX_b,\cD_b)\le A
\]
for all closed point $b\in B$.
\end{lem}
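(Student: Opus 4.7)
The plan is to reduce to uniform bounds on a dense open subset via Noetherian induction on $B$. After passing to connected components and then to irreducible components, it suffices to prove both inequalities after replacing $B$ by a dense open subset; iterating on the resulting finite locally-closed stratification then produces the global constants $\varepsilon$ and $A$.

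For the lower bound on $\hvol$, I would appeal to the constructibility (and lower semi-continuity) of normalized volumes in $\bR$-Gorenstein families of klt singularities, one of the outputs of the Stable Degeneration Conjecture \cite{XZ-minimizer-unique, XZ-SDC}. This implies that $b \mapsto \hvol(b, \cX_b, \cD_b)$ takes only finitely many values on $B$, each of which is positive by \cite{Li-normalized-volume}. Taking the minimum yields the desired $\varepsilon > 0$.

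For the upper bound on $\mldk$, fix the generic point $\eta$ of an irreducible component of $B$ and choose any Koll\'ar component $E_\eta$ over $\eta \in (\cX_\eta, \cD_\eta)$, realized by a plt blowup $\pi_\eta \colon Y_\eta \to \cX_\eta$. By a standard spreading-out argument, after replacing $B$ by a dense open subset I may extend $\pi_\eta$ to a proper birational morphism $\pi \colon \cY \to \cX$ carrying a $\pi$-exceptional prime divisor $\cE \subseteq \cY$ whose restriction to the generic fiber recovers $E_\eta$. A further shrinking of $B$ (using openness of flatness for $\cE \to B$, irreducibility of its fibers, openness of the plt property for $\bR$-pairs in families, and openness of relative ampleness) ensures that for every closed point $b$ in the shrunken $B$, the pair $(\cY_b, (\pi_*^{-1}\cD)_b + \cE_b)$ is plt, $\cE_b$ is an irreducible prime divisor with $\cE_b = \pi^{-1}(b)$, and $-(K_{\cY_b} + (\pi_*^{-1}\cD)_b + \cE_b)$ is ample over $b$. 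Thus $\cE_b$ is a Koll\'ar component over $b \in (\cX_b, \cD_b)$, and since log discrepancies are constant in flat $\bR$-Gorenstein families, $\mldk(b, \cX_b, \cD_b) \le A_{\cX_b, \cD_b}(\cE_b) = A_{\cX, \cD}(\cE)$ furnishes a uniform upper bound on this open set.

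The main obstacle is the simultaneous preservation of plt-ness and $\pi$-anti-ampleness after spreading out, especially given the $\bR$-coefficients in $\cD$, for which one cannot simply clear denominators. Each of the required conditions is nonetheless open on the base (the plt case being in the spirit of \cite{HLS-epsilon-plt-blowup}), so finitely many shrinkings suffice; Noetherian induction then assembles the constants across the strata into a single $A > 0$ and completes the proof.
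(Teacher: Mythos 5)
Your approach is essentially the same as the paper's: both bounds are established by combining constructibility/openness in families with Noetherian induction on the base. The difference is that the paper simply cites the relevant results — lower semi-continuity of $\hvol$ from \cite{BL-vol-lsc} (with constructibility from \cite[Theorem 1.3]{Xu-quasi-monomial} offered as an alternative, which is what you use), and \cite[Theorem 2.34]{HLQ-vol-ACC} for the existence of a flat family of Koll\'ar components with locally constant log discrepancy — whereas you sketch the spreading-out argument underlying the $\mldk$ part from scratch.

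That sketch is correct in spirit but glosses over one genuine subtlety that the paper's citation handles: a Koll\'ar component $E_\eta$ chosen over the (non-algebraically-closed) residue field $\kappa(\eta)$ need not be \emph{geometrically} irreducible, and if it is not, its spreading out $\cE$ will have reducible fibers $\cE_b$ at closed points, so they will not be Koll\'ar components. One must either choose $E_\eta$ to be geometrically irreducible, or (equivalently) first pass to a quasi-finite surjective cover of $B$ over which such a component is defined. This is exactly why the paper's statement of \cite[Theorem 2.34]{HLQ-vol-ACC} includes the clause ``after a (quasi-finite and surjective) base change.'' Your Noetherian-induction framework absorbs such a base change without difficulty — the bound pulls back along any surjection of bases — but the step deserves an explicit mention rather than being filed under ``irreducibility of its fibers,'' which is not by itself an open condition. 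A second, minor, point: the log discrepancy that is locally constant is the \emph{relative} one computed from $K_{\cY/B} - \pi^*(K_{\cX/B}+\cD)$, i.e.\ $A_{\cX_b,\cD_b}(\cE_b)$; writing it as $A_{\cX,\cD}(\cE)$ conflates this with the total-space log discrepancy involving $K_{\cX}$, which is a different quantity. With these two clarifications your argument is a sound self-contained version of the paper's.
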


\begin{proof}
The uniform lower bound of the local volumes follows from their lower semi-continuity \cite{BL-vol-lsc} in families together with Noetherian induction. Note that although the main result of \cite{BL-uks-openness} is stated for families with $\bQ$-coefficients, it remains valid for real coefficients. This is because there exist $\bQ$-divisors $\cD'\le \cD$ such that $K_{\cX/B}+\cD'$ is $\bQ$-Cartier and the coefficients of $\cD-\cD'$ are arbitrarily small; in particular, $\hvol(b,\cX_b,\cD_b)\ge \hvol(b,\cX_b,\cD'_b)$ and their difference can be made arbitrarily small for any fixed $b\in B$. Thus the lower semicontinuity of $\hvol(b,\cX_b,\cD_b)$ in $b\in B$ follows from the lower semicontinuity of $\hvol(b,\cX_b,\cD'_b)$. Alternatively, the local volume lower bound follows from the constructibility of the volume function \cite[Theorem 1.3]{Xu-quasi-monomial}, which also holds for families with real coefficients by the discussions in Theorem \ref{thm:SDC}.

The uniform upper bound of $\mldk$ is a direct consequence of \cite[Theorem 2.34]{HLQ-vol-ACC}: after a (quasi-finite and surjective) base change, the family admits a flat family of Koll\'ar components (in the sense of \cite[Definition 2.32]{HLQ-vol-ACC}), whose log discrepancy is locally constant and hence uniformly bounded.
\end{proof}

We next define the boundedness of log Fano cone singularities. Note that our definition is a priori stronger than their boundedness as klt singularities (it's not clear to us whether they are equivalent).

\begin{defn} \label{defn:cone bdd}
We say that a set $\cS$ of polarized log Fano cone singularities is bounded if there exists finitely many $\bR$-Gorenstein families $B_i\subseteq (\cX_i,\cD_i)\to B_i$ of klt singularities, each with a fiberwise good $\bT_i$-action for some nontrivial algebraic torus $\bT_i$, such that every $x\in (X,\Delta;\xi)$ in $\cS$ is isomorphic to $b\in (\cX_{i,b},\cD_{i,b};\xi_b)$ for some $i$, some $b\in B_i$ and some $\xi_b\in N(\bT_i)_\bR$. We say $\cS$ is log bounded if we only ask $x\in (X;\xi)$ to be isomorphic to $b\in (\cX_{i,b};\xi_b)$ and that $\Supp(\Delta)\subseteq \Supp(\cD_{i,b})$ under this isomorphism.

We say that a set $\cC$ of log Fano cone singularities is bounded if it's the underlying set of singularities from a bounded set of polarized log Fano cone singularities.
\end{defn}

\begin{lem} \label{lem:density bdd in family}
Let $\cC$ be a bounded set of log Fano cone singularities. Then there exists some constant $\theta>0$ such that $\Theta(x,X,\Delta)\ge \theta$ for all $x\in(X,\Delta)$ in $\cC$.
\end{lem}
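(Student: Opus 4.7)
The plan is to use Definition~\ref{defn:cone bdd} to reduce to the case of a single $\bR$-Gorenstein family $B\subseteq(\cX,\cD)\to B$ of klt singularities, with projection $\pi\colon\cX\to B$ and a fiberwise good $\bT$-action; passing to a connected component of $B$, I may assume $B$ is connected. By Lemma~\ref{lem:vol and mld^K bdd in family} there is a uniform $\varepsilon>0$ with $\hvol(b,\cX_b,\cD_b)\ge\varepsilon$ for every $b\in B$. Rewriting
\[
\Theta(b,\cX_b,\cD_b)\;=\;\frac{\hvol(b,\cX_b,\cD_b)}{\inf_{\xi\in\ft^+_\bR}\hvol_{\cX_b,\cD_b}(\wt_{\xi})},
\]
it suffices to produce a constant $C$, independent of $b$, such that $\inf_{\xi}\hvol_{\cX_b,\cD_b}(\wt_{\xi})\le C$.

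For this upper bound, the idea is to exhibit a \emph{single} Reeb vector $\xi^{\ast}$ valid on every fiber. By flatness of $\pi$ and $\bT$-equivariance, each weight piece $(\pi_\ast\cO_\cX)_\alpha$ is a flat $\cO_B$-module with fiber $(R_b)_\alpha$. Since a good $\bT$-action forces the weights of a generating set of $R_b$ to lie in a strongly convex rational cone, only finitely many monomials have any fixed weight $\alpha$, so $(\pi_\ast\cO_\cX)_\alpha$ is coherent; being flat and coherent it is locally free of locally constant rank. Consequently the support of the grading of $R_b$, and hence the Reeb cone $\ft^+_\bR\subseteq N(\bT)_\bR$, is independent of $b$. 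Fix once and for all any $\xi^{\ast}$ in this common Reeb cone.

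For this fixed $\xi^{\ast}$, both factors of $\hvol_{\cX_b,\cD_b}(\wt_{\xi^{\ast}})=A_{\cX_b,\cD_b}(\wt_{\xi^{\ast}})^n\cdot\vol_{\cX_b,b}(\wt_{\xi^{\ast}})$ are determined by combinatorial data constant in $b$: the log discrepancy is the linear function of $\xi^{\ast}$ read off from the $\bT$-weight of the relative canonical form of $K_{\cX/B}+\cD$ (which is $\bR$-Cartier and $\bT$-equivariant, hence of constant weight), while the toric volume equals
\[
\vol_{\cX_b,b}(\wt_{\xi^{\ast}})\;=\;\lim_{m\to\infty}\frac{n!}{m^n}\sum_{\langle\xi^{\ast},\alpha\rangle<m}\dim_{\bk}(R_b)_\alpha,
\]
which depends only on the locally constant ranks of the weight spaces. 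Hence $\hvol_{\cX_b,\cD_b}(\wt_{\xi^{\ast}})$ is constant on each component; as boundedness provides only finitely many components, we obtain a uniform bound $C$, and the lemma follows with $\theta=\varepsilon/C$. The main obstacle I anticipate is verifying the local constancy of the weight-space ranks $\dim_{\bk}(R_b)_\alpha$ (and hence the invariance of the Reeb cone) across the family; once this combinatorial invariance is in hand, the remainder is a direct computation with toric valuations.
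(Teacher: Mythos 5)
Your proposal follows the same overall strategy as the paper: reduce to a single connected component of an $\bR$-Gorenstein family with fiberwise good $\bT$-action, use Lemma~\ref{lem:vol and mld^K bdd in family} for the lower bound on $\hvol(b,\cX_b,\cD_b)$, show the Reeb cone is constant in $b$ (by flatness and finiteness of the weight spaces), fix a single Reeb vector, and bound $\hvol_{\cX_b,\cD_b}(\wt_\xi)$ from above uniformly. The volume factor is treated identically in both proofs.

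Where you diverge is the treatment of $A_{\cX_b,\cD_b}(\wt_\xi)$. The paper deliberately picks $\xi$ \emph{primitive in $N$}, so that $\wt_\xi=\ord_{\cE}$ is a divisorial valuation for a single divisor $\cE$ over $\cX$; this makes $A_{\cX_b,\cD_b}(\wt_\xi)=A_{\cX,\cD}(\cE)$ for general $b$, and Noetherian induction then gives constructibility (hence a uniform upper bound) in $b$. You instead invoke the formula that $A_{\cX_b,\cD_b}(\wt_{\xi^*})$ is the pairing of $\xi^*$ with the $\bT$-weight of a $\bT$-eigengenerator of (a Cartier multiple of) $K_{\cX/B}+\cD$ near the section, and you conclude the log discrepancy is \emph{constant} in $b$, not merely bounded. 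This is a genuine fact and gives a cleaner statement, but it is less self-contained: you need to justify (a) that a $\bT$-equivariant trivialization with a well-defined weight exists for the merely $\bR$-Cartier (not $\bQ$-Cartier) divisor $K_{\cX/B}+\cD$ — which requires writing it as an $\bR$-linear combination of $\bT$-invariant $\bQ$-Cartier divisors and checking independence of presentation — and (b) the identity $A_{X,\Delta}(\wt_\xi)=\langle\xi,\rho\rangle$ for a general (possibly irrational) Reeb vector $\xi$, which is a known but nontrivial statement about log discrepancies of toric valuations on $\bT$-varieties, rather than a direct consequence of the definitions. The paper's choice of an integral $\xi$ and Noetherian induction avoids needing either of these, at the modest cost of only getting boundedness rather than constancy. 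Your argument is correct modulo making (a) and (b) precise, and in particular you should be aware that the paper's restriction to $\xi\in N$ is doing real work in keeping the log-discrepancy step elementary.
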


\begin{proof}
By definition, it suffices to show that for any $\bR$-Gorenstein family of log Fano cone singularities $B\subseteq (\cX,\cD)\to B$ with a fiberwise good torus $\bT$-action, we have a uniform positive lower bound of $\Theta(b,\cX_b,\cD_b)$. We may assume that $B$ is connected. Replacing $\cX$ by $\bT$-invariant affine subset, We may also assume that $\cX=\Spec(\cR)$ is affine. We have a weight decomposition $\cR=\oplus_{\alpha\in M} \cR_\alpha$ which reduces to the weight decomposition on the fibers. By flatness of $\cX\to B$, each $\cR_\alpha$ is flat over $B$. It follows that the Reeb cone $\ft^+_{\bR,b}\subseteq N_\bR$ is locally constant in $b\in B$, hence is constant as $B$ is connected. Choose any primitive $\xi\in N$ that lies in the (constant) Reeb cone. It suffices to show that $\Theta(b,\cX_b,\cD_b;\xi)$ is uniformly bounded from below. Since the $\bT$-action is good, the $\cR_\alpha$'s are also finite over $B$, hence locally free by flatness. From the defition of volume, this easily implies that $\vol(\wt_\xi)$ is constant as $b\in B$ varies. Since $\xi\in N$ comes from a $\bG_m$-action, there is a divisor $\cE$ over $\cX$ such that $\ord_{\cE}=\wt_\xi$ as valuations over $\cX$ and hence $A_{\cX_b,\cD_b}(\wt_\xi)=A_{\cX_b,\cD_b}(\ord_{\cE_b})=A_{\cX,\cD}(\cE)$ for general $b\in B$. By Noetherian induction, this implies that $A_{\cX_b,\cD_b}(\wt_\xi)$ is constructible in $b\in B$ and thus uniformly bounded from above. Finally, by Lemma \ref{lem:vol and mld^K bdd in family} we know that $\hvol(b,\cX_b,\cD_b)\ge \varepsilon$ for some constant $\varepsilon>0$. Putting these together we deduce from the definition that $\Theta(b,\cX_b,\cD_b;\xi)$ is uniformly bounded from below.
\end{proof}

% \begin{lem} \label{lem:analytic bdd = bdd for cone}
% Any two analytically isomorphic log Fano cone singularities are also algebraically isomorphic.
% \end{lem}

% \begin{lem}
% Let $\bT=\bG_m^r$ be a torus and let $\cS$ be a bounded set of singularities with good $\bT$-action. Then there exists a family $B\subseteq \cX\to B$ of singularities with a fiberwise good torus action such that any $x\in X$ in $\cS$ is isomorphic to $b\in \cX_b$ for some $b\in B$.
% \end{lem}

% \begin{cor} \label{cor:fiberwise T-action, log Fano cone}
% Let $\bT$ be a torus and let $\cS$ be a bounded set of log Fano cone singularities with good $\bT$-action. Then there exists a $\bQ$-Gorenstein family $B\subseteq (\cX,\cD)\to B$ of log Fano cone singularities with a fiberwise good torus action such that any $x\in (X,\Delta)$ in $\cS$ is isomorphic to $b\in (\cX_b,\cD_b)$ for some $b\in B$.
% \end{cor}

\subsection{Real coefficients} \label{ss:real coef}

The following result will help us reduce many questions about pairs with real coefficients to ones with rational coefficients. We will often use it without explicit mention when we refer to results that are originally stated for $\bQ$-coefficients.

\begin{lem} \label{lem:R to Q}
Let $n\in \bN$ and let $I\subseteq [0,1]$ be a finite set. Then there exists a finite set $I'\subseteq [0,1]\cap\bQ$ depending only on $n$ and $I$ such that the following holds.

For any $n$-dimensional klt singularity $x\in (X,\Delta)$ with coefficients in $I$, there exists some effective $\bQ$-divisor $\Delta'\ge \Delta$ on $X$ with coefficients in $I'$, such that $\Supp(\Delta)=\Supp(\Delta')$, $x\in (X,\Delta')$ is klt, $\hvol(x,X,\Delta')\ge 2^{-n}\hvol(x,X,\Delta)$ and $\mldk(x,X,\Delta')\le 2\cdot \mldk(x,X,\Delta)$. If in addition $x\in (X,\Delta;\xi)$ is a log Fano cone singularity, then $\Theta(x,X,\Delta';\xi)\ge 4^{-n}\Theta(x,X,\Delta;\xi)$.
\end{lem}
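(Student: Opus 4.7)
The plan is to construct $\Delta'$ as a small $\bR$-Cartier perturbation of $\Delta$ chosen to have rational coefficients in a fixed finite set. Fix a $\bQ$-basis $\{1 = v_0, v_1, \ldots, v_{s-1}\}$ of the finite-dimensional $\bQ$-subspace $V := \mathrm{span}_\bQ(I \cup \{1\}) \subseteq \bR$; both $s$ and the $v_k$'s depend only on $I$. Each $a \in I$ admits a unique expression $a = \sum_k q_a^{(k)} v_k$ with $q_a^{(k)} \in \bQ$. Letting $\Gamma_a$ denote the sum of prime components of $\Delta$ with coefficient $a$, we obtain the decomposition
\[
\Delta \;=\; G_0 + \sum_{k=1}^{s-1} v_k\, G_k, \qquad G_k := \sum_{a \in I} q_a^{(k)} \Gamma_a,
\]
where the $G_k$ are $\bQ$-divisors supported on $\Supp(\Delta)$ (with possibly mixed signs for $k \geq 1$).

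Extending $\{v_0, \ldots, v_{s-1}\}$ to a $\bQ$-basis of $\bR$ and decomposing the $\bR$-Cartier divisor $K_X + \Delta$ coefficientwise along this basis gives the key rationality fact: $K_X + G_0$ and each $G_k$ ($k \geq 1$) are individually $\bQ$-Cartier. For each $k \geq 1$ I then pick a rational $v_k^* \in \bQ$ close to $v_k$ from a fixed finite set depending only on $n$ and $I$, and set $\Delta' := G_0 + \sum_{k \geq 1} v_k^* G_k$. This is a $\bQ$-divisor supported on $\Supp(\Delta)$ with $K_X + \Delta'$ being $\bQ$-Cartier; its coefficient on each $\Gamma_a$ is $\sum_k v_k^* q_a^{(k)}$ (with $v_0^* := 1$), and these values range over a finite rational set $I' \subseteq [0,1] \cap \bQ$. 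The $v_k^*$'s are chosen so that $\sum_{k \geq 1} (v_k^* - v_k) q_a^{(k)} \geq 0$ for all $a \in I$, ensuring $\Delta' \geq \Delta$; this is a rational polyhedral cone condition in $\bR^{s-1}$ containing the origin, and degenerate cases (where the cone has empty interior) are handled by additionally perturbing $G_0$ via its $\bQ$-Cartier freedom.

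Given $\Delta'$ as above, I then quantify how the invariants change. Choosing $v_k^*$'s close enough to $v_k$ that the $\bR$-Cartier perturbation $G := \Delta' - \Delta$ satisfies $(X, \Delta + 2G) = (X, 2\Delta' - \Delta)$ being klt (openness of klt for small $G$), we have $A_{X, \Delta + 2G}(v) > 0$ for every $v \in \Val^*_{X, x}$, hence
\[
A_{X, \Delta'}(v) \;=\; A_{X, \Delta}(v) - v(G) \;\geq\; \tfrac{1}{2} A_{X, \Delta}(v).
\]
Taking $n$-th powers, multiplying by $\vol(v)$, and taking infima over $v$ yields $\hvol(x, X, \Delta') \geq 2^{-n} \hvol(x, X, \Delta)$; applied to $v = \wt_\xi$ this gives $\hvol_{X, \Delta'}(\wt_\xi) \geq 2^{-n} \hvol_{X, \Delta}(\wt_\xi)$, whence $\Theta(X, \Delta'; \xi) \geq 2^{-n} \Theta(X, \Delta; \xi) \geq 4^{-n} \Theta(X, \Delta; \xi)$. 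For $\mldk$: since $G$ is small, the Koll\'ar components of $(X, \Delta)$ and $(X, \Delta')$ coincide by openness of plt, and because $\Delta' \geq \Delta$ we have $A_{X, \Delta'}(E) \leq A_{X, \Delta}(E)$ on each such $E$, so $\mldk(x, X, \Delta') \leq \mldk(x, X, \Delta) \leq 2 \cdot \mldk(x, X, \Delta)$.

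The main obstacle is ensuring the rational approximations $v_k^*$ (and hence the finite set $I'$) can be chosen uniformly over all klt pairs $(X, \Delta)$ with coefficients in $I$; equivalently, I need a quantitative openness of klt for $(X, 2\Delta' - \Delta)$ depending only on $n$ and $I$. Such a uniform bound can be extracted from Lemma \ref{lem:alpha gives Izumi const}: the log canonical threshold $\lct_x(X, \Delta; G)$ is bounded below by $\min\{1, \alpha(E, \Delta_E)\} \cdot A_{X, \Delta}(E) / \ord_E(G)$ for any Koll\'ar component $E$, and once the coefficients lie in the finite set $I$ the relevant $\alpha$-invariants and Izumi-type constants admit uniform bounds (compare \cite{HLS-epsilon-plt-blowup}), translating into a uniform bound on how close $v_k^*$ must be to $v_k$.
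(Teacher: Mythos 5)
Your overall blueprint matches the paper's: perturb the real coefficients to nearby rationals inside the relevant rational affine subspace, use convexity of log discrepancies to control $\hvol$, and observe that Koll\'ar components are preserved so $\mldk$ is controlled. The paper's proof, however, invokes \cite[Theorem 5.6]{HLS-epsilon-plt-blowup} precisely to supply the \emph{uniform} openness of klt and plt that you correctly identify as the crux, and this is where your argument has a genuine gap.

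In your last paragraph you try to extract that uniformity from Lemma \ref{lem:alpha gives Izumi const}, asserting that once the coefficients lie in a fixed finite set $I$, the $\alpha$-invariants $\alpha(E,\Delta_E)$ and the associated Izumi-type constants admit uniform bounds. This is not available with the hypotheses of the lemma. The lemma is supposed to hold for \emph{all} $n$-dimensional klt singularities with coefficients in $I$, with no lower bound on $\hvol$ or upper bound on $\mldk$; without such extra constraints there is no uniform control on a Koll\'ar component $E$ over $(X,\Delta)$: its log discrepancy $A_{X,\Delta}(E)$ can be arbitrarily small (no Cartier index bound), the log Fano pair $(E,\Delta_E)$ need not lie in a bounded family, and $\ord_E(G)$ for the perturbation $G$ has no a priori upper bound. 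In the paper these kinds of bounds on $(E,\Delta_E)$ are established only in Proposition \ref{prop:bir bdd proj cone}, and there they rely crucially on $\hvol(x,X,\Delta)\geq\varepsilon$ and $\mldk(x,X,\Delta)\leq A$; none of those hypotheses are present in Lemma \ref{lem:R to Q}. So the reduction cannot be bootstrapped from Lemma \ref{lem:alpha gives Izumi const}; the correct tool is the uniform rational polytope theorem \cite[Theorem 5.6]{HLS-epsilon-plt-blowup}, which is itself a substantial result (ultimately resting on boundedness of complements). Your ``compare \cite{HLS-epsilon-plt-blowup}'' gesture points at the right reference, but the independent derivation you sketch does not close.

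Two smaller points. First, in the volume-ratio step you deduce ``$\hvol_{X,\Delta'}(\wt_\xi)\geq 2^{-n}\hvol_{X,\Delta}(\wt_\xi)$, whence $\Theta(X,\Delta';\xi)\geq 2^{-n}\Theta(X,\Delta;\xi)$'': a lower bound on the denominator of $\Theta$ is the wrong direction. What is actually needed is an \emph{upper} bound $\hvol_{X,\Delta'}(\wt_\xi)\leq 2^n\hvol_{X,\Delta}(\wt_\xi)$ (the paper gets this from $2\va-\va'\in U$, i.e.\ $A_{X,\Delta'}(v)\leq 2A_{X,\Delta}(v)$), or more simply $\hvol_{X,\Delta'}(\wt_\xi)\leq\hvol_{X,\Delta}(\wt_\xi)$ from $\Delta'\geq\Delta$; the conclusion is salvageable but the stated implication is incorrect. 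Second, the paper does not bother with the explicit basis-decomposition $\Delta=G_0+\sum v_kG_k$ and the $\bQ$-Cartier pieces; it works directly with the rational envelope of the coefficient vector $\va$ and picks a rational $\va'$ with $2\va'-\va$ and $2\va-\va'$ both in the uniform open set $U$. Your basis-decomposition is a valid reformulation, but the extra structure is not what carries the argument.
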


\begin{proof}
This is essentially a consequence of \cite[Theorem 5.6]{HLS-epsilon-plt-blowup}. Without loss of generality we may assume $1\in I$. Write $I=\{a_1,\dots,a_m\}$ and let $\va=(a_1,\dots,a_m)\in \bR^m$. Let $V\subseteq \bR^m$ be the rational envelope of $\va$, i.e. the smallest affine subspace defined over $\bQ$ that contains $\va$. By \cite[Theorem 5.6]{HLS-epsilon-plt-blowup}, there exists an open neighbourhood $U$ of $\va\in V$ depending only on $n$ and $I$ such that for any $n$-dimensional singularity $x\in X$ and any Weil divisors $\Delta_1,\dots,\Delta_m\ge 0$, if $x\in (X,\sum_{i=1}^m a_i \Delta_i)$ is klt (resp. plt) then $x\in (X,\sum_{i=1}^m a'_i \Delta_i)$ is klt (resp. plt) for any $\va'=(a'_1,\dots,a'_m)\in U$. Note that the plt case implies that if $E$ is a Koll\'ar component over $x\in (X,\sum_{i=1}^m a_i \Delta_i)$ then it is also a Koll\'ar component over $x\in (X,\sum_{i=1}^m a'_i \Delta_i)$.

Choose some $\va'=(a'_1,\dots,a'_m)\in U\cap\bQ^m$ such that $2\va'-\va, 2\va-\va'\in U$. We claim that $I'=\{a'_1,\dots,a'_m\}$ satisfies the required conditions. Indeed, for any $n$-dimensional klt singularity $x\in (X,\Delta=\sum_{i=1}^m a_i \Delta_i)$ (where the $\Delta_i$'s are Weil divisors), if we set $\Delta'=\sum_{i=1}^m a'_i \Delta_i$ and let $\Delta_1=2\Delta'-\Delta$, then by our choice of $U$ and $\va'$ we know that $\Supp(\Delta')=\Supp(\Delta)$, $x\in (X,\Delta')$ and $x\in (X,\Delta_1)$ are both klt, and any Koll\'ar component over $x\in (X,\Delta)$ is also a Koll\'ar component over $x\in (X,\Delta')$. Moreover, since $\Delta'=\frac{1}{2}(\Delta+\Delta_1)$, we have $A_{X,\Delta'}(v)=\frac{1}{2}(A_{X,\Delta}(v)+A_{X,\Delta_1}(v))\ge \frac{1}{2}A_{X,\Delta}(v)$ for any valuation $v\in \Val_{X,x}^*$. Similarly using $2\va-\va'\in U$ we get $A_{X,\Delta'}(v)\le 2A_{X,\Delta}(v)$. In particular we see that $2^{-n}\hvol_{X,\Delta}(v)\le \hvol_{X,\Delta'}(v)\le 2^n\hvol_{X,\Delta}(v)$. These imply the inequalities about $\hvol$, $\mldk$ and the volume ratio (all by definition).
\end{proof}

We also note the Stable Degeneration Conjecture holds for pairs with real coefficients.

\begin{thm}[\cites{Blu-minimizer-exist,LX-stability-higher-rank,Xu-quasi-monomial,XZ-minimizer-unique,XZ-SDC}] \label{thm:SDC}
Every klt singularity $x\in (X,\Delta)$ has a special degeneration to a K-semistable log Fano cone singularity $x_0\in (X_0,\Delta_0)$ with $\hvol(x,X,\Delta)=\hvol(x_0,X_0,\Delta_0)$
\end{thm}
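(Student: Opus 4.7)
The plan is to extend the rational-coefficient SDC, as established in \cites{Blu-minimizer-exist,LX-stability-higher-rank,Xu-quasi-monomial,XZ-minimizer-unique,XZ-SDC}, to real coefficients by approximating $\Delta$ with nearby $\bQ$-divisors via Lemma \ref{lem:R to Q}. The argument naturally splits into four ingredients: existence of a normalized-volume minimizer $v^\ast$; its quasi-monomiality; finite generation of the associated graded ring of $v^\ast$; and K-semistability of the resulting degeneration with matching local volume.

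For the first two ingredients, the extension to real coefficients presents no serious difficulty. Blum's existence argument \cite{Blu-minimizer-exist} uses only lower semicontinuity of $A_{X,\Delta}$ and properness on the rescaled valuation space, and since the log discrepancy function is defined for arbitrary effective $\bR$-divisors, a minimizer $v^\ast\in\Val^\ast_{X,x}$ of $\hvol_{X,\Delta}$ exists verbatim. For quasi-monomiality \cite{Xu-quasi-monomial}, pick by Lemma \ref{lem:R to Q} a $\bQ$-divisor $\Delta'\ge\Delta$ with $\Supp(\Delta')=\Supp(\Delta)$ and $(X,\Delta')$ klt; every Koll\'ar component over $(X,\Delta)$ is also a Koll\'ar component over $(X,\Delta')$, and $A_{X,\Delta}$ and $A_{X,\Delta'}$ agree on any fixed valuation up to a factor of $2$. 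Since quasi-monomiality of $v^\ast$ is a property of the valuation, independent of the boundary, it transfers from the rational pair $(X,\Delta')$, where \cite{Xu-quasi-monomial} applies directly, back to $(X,\Delta)$.

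The main obstacle is the finite generation step, which in \cite{XZ-SDC} proceeds by exhibiting $v^\ast$ as a monomial lc place of a carefully chosen $\bN$-complement of a slight perturbation of the original pair, and then invoking higher rank finite generation. To adapt this to real coefficients I would again apply Lemma \ref{lem:R to Q}, replacing $(X,\Delta)$ by a nearby klt $\bQ$-pair $(X,\Delta')$ in the rational-envelope neighborhood; the inputs needed to produce the complement, namely boundedness of complements, ACC of log canonical thresholds, and openness of klt/plt in the coefficients, all survive this perturbation. The resulting finitely generated graded ring of $v^\ast$ then yields the degeneration $(X_0,\Delta_0)$, and the local-volume identity $\hvol(x,X,\Delta)=\hvol(x_0,X_0,\Delta_0)$ is preserved under this canonical degeneration. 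K-semistability of $(X_0,\Delta_0)$ finally follows from uniqueness of the $\hvol$-minimizer up to rescaling \cite{XZ-minimizer-unique}, which itself extends to real coefficients by one further application of Lemma \ref{lem:R to Q} combined with the continuity of $\hvol_{X_0,\Delta_0}(\wt_\xi)$ along the Reeb cone supplied by Lemma \ref{lem:vol continuous in Reeb cone}.
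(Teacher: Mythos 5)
Your overall split into existence, quasi-monomiality, finite generation, and K-semistability is sensible, but the approximation argument via Lemma~\ref{lem:R to Q} has a genuine gap at the quasi-monomiality step (and, for the same reason, at the finite-generation step). When you replace $\Delta$ by a nearby $\bQ$-divisor $\Delta'$, the normalized volume function $\hvol_{X,\Delta'}$ is a genuinely different functional from $\hvol_{X,\Delta}$, and its minimizer need not coincide with your $v^\ast$. The theorem of \cite{Xu-quasi-monomial} applied to $(X,\Delta')$ asserts quasi-monomiality of the minimizer of $\hvol_{X,\Delta'}$ --- a different valuation --- and says nothing about $v^\ast$. The observation that ``quasi-monomiality is a property of the valuation, independent of the boundary'' is true but irrelevant: you have not yet established that $v^\ast$ is quasi-monomial for \emph{any} boundary, so there is nothing to transfer. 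To rescue this line of argument you would need a convergence statement identifying the minimizers of $\hvol_{X,\Delta_t}$ as $\Delta_t\to\Delta$ inside the rational envelope, which you do not attempt and which is not trivially available. The same structural objection applies to your proposed treatment of the complement construction in \cite{XZ-SDC}: the complement must be constructed with respect to the actual minimizer $v^\ast$ of $\hvol_{X,\Delta}$, not the minimizer of a nearby rational pair.

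The paper circumvents this by a different, more direct route. It observes that the arguments in \cites{Blu-minimizer-exist,LX-stability-higher-rank,XZ-minimizer-unique,XZ-SDC} are not sensitive to the rationality of coefficients and extend verbatim, and that the only genuinely $\bQ$-sensitive ingredient --- the monotonic $N$-complements used in \cite{Xu-quasi-monomial} --- has already been extended to $\bR$-coefficients in \cite[Theorems~3.3 and~3.4]{HLQ-vol-ACC}. With quasi-monomiality and finite generation in hand for $\bR$-pairs, the local-volume identity then follows from \cite[Lemma~2.58]{LX-stability-higher-rank}, since $\hvol_{X,\Delta}(v)=\hvol_{X_0,\Delta_0}(\wt_{\xi_v})$ and both sides compute the respective local volumes (the left because $v$ is the minimizer, the right because the central fiber is K-semistable with polarization $\xi_v$). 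If you want to follow the perturbation strategy, the missing ingredient you would have to supply is control of the minimizer under small perturbations of the boundary; citing \cite[Theorems~3.3 and~3.4]{HLQ-vol-ACC} is the shorter path.
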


\begin{proof}
The arguments in \cites{Blu-minimizer-exist,LX-stability-higher-rank,XZ-minimizer-unique,XZ-SDC} extend directly to real coefficients, since they are not sensitive to the coefficients. The proof in \cite{Xu-quasi-monomial} uses the existence of monotonic $N$-complement which requires $\bQ$-coefficients, but the main results in \emph{loc. cit.} can be extended to $\bR$-coefficients by \cite[Theorems 3.3 and 3.4]{HLQ-vol-ACC}. Thus \cite[Theorem 1.2]{XZ-SDC} holds for $\bR$-pairs. In particular, the minimizer $v$ of the normalized volume function $\hvol_{X,\Delta}$ induces a special degeneration of $x\in (X=\Spec(R),\Delta)$ to a K-semistable log Fano cone $x_0\in (X_0=\Spec(\gr_v R),\Delta_0;
\xi_v)$. By \cite[Lemma 2.58]{LX-stability-higher-rank}, we have
\[
\hvol_{X,\Delta}(v)=\hvol_{X_0,\Delta_0}(\wt_{\xi_v}).
\]
Since $\hvol(x,X,\Delta)=\hvol_{X,\Delta}(v)$ (as $v$ is the minimizer of $\hvol_{X,\Delta}$) and $\hvol(x_0,X_0,\Delta_0)=\hvol_{X_0,\Delta_0}(\wt_{\xi_v})$ (as $x_0\in (X_0,\Delta_0;\xi_v)$ is K-semistable), we see that the degeneration preserves the local volume.
\end{proof}

\subsection{Results from Part I} \label{ss:part I}

In this subsection we collect slight modification of several results from \cite{Z-mld^K-1} that we need in later proofs. For the first result, recall that we say a singularity $x\in (X,\Delta)$ is of klt type if there exists some effective $\bR$-divisor $D$ on $X$ such that $x\in (X,\Delta+D)$ is klt.

\begin{lem} \label{lem:klt type threshold}
Let $n\in\bN$ and let $\varepsilon>0$. Then there exists some constant $c=c(n,\varepsilon)>0$ such that for any $n$-dimensional klt singularity $x\in (X,\Delta)$ with $\hvol(x,X,\Delta)\ge \varepsilon$, we have that $x\in (X,(1+c)\Delta)$ is of klt type.
\end{lem}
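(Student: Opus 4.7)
To show $x\in(X,(1+c)\Delta)$ is of klt type, I would construct an effective $\bR$-Cartier divisor $H$ on $X$ with $H\ge c\Delta$ componentwise and $(X,\Delta+H)$ klt. Given such $H$, setting $D:=H-c\Delta\ge 0$ provides the witness: $K_X+(1+c)\Delta+D = K_X+\Delta+H$ is $\bR$-Cartier, and $(X,(1+c)\Delta+D)=(X,\Delta+H)$ is klt. The whole problem therefore reduces to building such an $H$.

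First, I would decompose $\Delta=\sum_j t_j B_j$ as a convex combination of $\bQ$-boundaries $B_j$ with each $(X,B_j)$ klt, and apply Lemma~\ref{lem:R to Q} to reduce to the case that $\Delta$ has $\bQ$-coefficients (at the cost of shrinking $\varepsilon$ by a factor $2^{-n}$); then $K_X+\Delta$ is $\bQ$-Cartier with Cartier index at most $n^n/\varepsilon$ by Lemma~\ref{lem:index bound via volume}. Writing $\Delta=\sum_i a_i\Delta_i$ with $a_i\in\bQ_{>0}$, letting $\fa_i\subset\cO_X$ be the ideal of $\Delta_i$, and choosing $N$ sufficiently divisible, I would take $f$ to be a very general element of the ideal $\fa:=\prod_i \fa_i^{\lceil cNa_i\rceil}$ and set $H:=\tfrac{1}{N}(f)$. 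This $H$ is an effective $\bQ$-Cartier divisor with $H\ge c\Delta$, and for any divisorial valuation $v$ over $x$ we have $v(H)\ge c\cdot v(\Delta)$ with equality (up to bounded rounding) for any fixed prescribed $v$.

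To verify $(X,\Delta+H)$ is klt, I would pick a Koll\'ar component $E$ over $x\in(X,\Delta)$ approximating the minimizer $v^\ast$ of $\hvol_{X,\Delta}$ (which exists as a quasi-monomial valuation by \cite{Blu-minimizer-exist, Xu-quasi-monomial, XZ-minimizer-unique}). Via the stable degeneration (Theorem~\ref{thm:SDC}), the induced pair $(E,\Delta_E)$ is arbitrarily close to K-semistable, so $\alpha(E,\Delta_E)\ge\alpha_0(n)>0$. Taking $f$ generic for this particular $E$, Lemma~\ref{lem:alpha gives Izumi const} gives
\[
\lct_x(X,\Delta;H) \;\ge\; \alpha_0\cdot\frac{A_{X,\Delta}(E)}{\ord_E(H)} \;=\; \frac{\alpha_0}{c}\cdot\frac{A_{X,\Delta}(E)}{\ord_E(\Delta)},
\]
so $(X,\Delta+H)$ is klt provided $c$ is chosen, say, to be half of $\alpha_0\cdot A_{X,\Delta}(E)/\ord_E(\Delta)$.

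The main obstacle is a uniform lower bound of the form $A_{X,\Delta}(E)/\ord_E(\Delta)\ge \mu_0(n,\varepsilon)>0$ for the chosen Koll\'ar component $E$. On the K-semistable log Fano cone $(X_0,\Delta_0;\xi)$ produced by the degeneration, this ratio equals $A_{X_0,\Delta_0}(\wt_\xi)/\wt_\xi(\Delta_0)$; in the quasi-regular (essentially toric) situation it can be bounded below by an AM--GM estimate applied to the weight decomposition combined with $\hvol\ge\varepsilon$, and the irregular case is handled by a small perturbation of the Reeb vector $\xi$ (Lemma~\ref{lem:vol continuous in Reeb cone}). Descending the bound from $(X_0,\Delta_0)$ back to $(X,\Delta)$ while keeping uniform control on the Koll\'ar component approximations is the technical heart of the argument, and is precisely where the Izumi-type estimates and structural results from Part~I \cite{Z-mld^K-1} are needed.
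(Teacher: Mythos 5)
Your proposal takes a genuinely different route from the paper, but it has real gaps.

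The paper's proof is short and structural: by \cite{BCHM} one extracts a small birational morphism $\pi\colon Y\to X$ with $K_Y$ $\bQ$-Cartier and $\pi$-ample; since $\pi$ is small, $\Delta_Y$ is $\bR$-Cartier and $\hvol(y,Y,\Delta_Y)\ge\hvol(x,X,\Delta)\ge\varepsilon$ for every $y\in\pi^{-1}(x)$ by \cite[Lemma 2.9(2)]{LX-cubic-3fold}; then \cite[Theorem 3.1]{Z-mld^K-1} is applied as a black box to conclude $(Y,(1+c)\Delta_Y)$ is klt, and finally $-(K_Y+(1+c)\Delta_Y)\sim_{\bQ,\pi} cK_Y$ shows $\pi$ is the ample model of $-(K_X+(1+c)\Delta)$, so that $x\in(X,(1+c)\Delta)$ is of klt type by \cite[Lemma 2.4]{Z-direct-summand}. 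In contrast, you try to directly build an $\bR$-Cartier $H\ge c\Delta$ with $(X,\Delta+H)$ klt and invoke Lemma~\ref{lem:alpha gives Izumi const}; in effect you are attempting to reprove the content of \cite[Theorem 3.1]{Z-mld^K-1} rather than use it.

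Two concrete problems. First, you invoke Lemma~\ref{lem:R to Q} to pass to $\bQ$-coefficients, but that lemma produces a constant depending on a \emph{fixed finite coefficient set} $I$, while the statement you are proving asserts $c=c(n,\varepsilon)$ with no coefficient hypothesis at all. Your reduction quietly weakens the target statement. Second, and more seriously, the uniform bounds that make the Izumi estimate useful --- namely a lower bound $\alpha(E,\Delta_E)\ge\alpha_0(n)$ together with a lower bound $A_{X,\Delta}(E)/\ord_E(\Delta)\ge\mu_0(n,\varepsilon)$ --- are not established. Your justification for the alpha bound ("$(E,\Delta_E)$ is arbitrarily close to K-semistable, so $\alpha\ge\alpha_0(n)$") does not follow: the Koll\'ar components approximating the normalized volume minimizer $v^*$ are not themselves K-semistable in general (when $v^*$ is not divisorial, K-semistability is a property of the quasi-monomial limit, not of the approximating sequence), so no alpha bound on $(E,\Delta_E)$ is implied by the stable degeneration. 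You acknowledge in the last paragraph that "descending the bound from $(X_0,\Delta_0)$ back to $(X,\Delta)$ \ldots is the technical heart of the argument," but this is precisely the step that would need to be carried out, and it is exactly what \cite[Theorem 3.1]{Z-mld^K-1} supplies. As written, the proposal sketches a plausible strategy but does not close it; the paper's proof, by routing through a small $\bQ$-factorialization and citing the prequel, avoids rederiving that input.
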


\begin{proof}
Since $x\in (X,\Delta)$ is klt, by \cite{BCHM} (\emph{cf.} \cite[Lemma 4.7]{Z-mld^K-1}) there exists a small birational morphism $\pi\colon Y\to X$ such that $K_Y$ is $\bQ$-Cartier and relatively ample. Since $\pi$ is small, we have $K_Y+\Delta_Y=\pi^*(K_X+\Delta)$, hence by \cite[Lemma 2.9(2)]{LX-cubic-3fold} (\emph{cf.} the proof of \cite[Lemma 2.10]{Z-mld^K-1}) we have 
\[
\hvol(y,Y,\Delta_Y)\ge \hvol(x,X,\Delta)\ge \varepsilon
\]
for all $y\in \pi^{-1}(x)$. By \cite[Theorem 3.1]{Z-mld^K-1}, there exists some constant $c=c(n,\varepsilon)>0$ such that the pair $(Y,(1+c)\Delta_Y)$ is klt for all $y\in \pi^{-1}(x)$. Note that $-(K_Y+(1+c)\Delta_Y)\sim_{\bQ,\pi} cK_Y$, hence $\pi$ is also the ample model of $-(K_X+(1+c)\Delta)$. By \cite[Lemma 2.4]{Z-direct-summand}, this implies that $x\in (X,(1+c)\Delta)$ is of klt type.
\end{proof}

The second result is extracted from the proof of \cite[Theorem 4.1]{Z-mld^K-1}.

\begin{lem} \label{lem:Cartier index on plt blowup}
Let $n\in\bN$, let $\varepsilon,A>0$, and let $I\subseteq [0,1]\cap \bQ$ be a finite set. Then there exists some integer $N=N(n,\varepsilon,A,I)>0$ such that for any $n$-dimensional klt singularity $x\in (X,\Delta)$ with coefficients in $I$ and $\hvol(x,X,\Delta)\ge \varepsilon$, and for any Koll\'ar component $E$ over $x\in (X,\Delta)$ with $A_{X,\Delta}(E)\le A$, we have that $NE$ and $N(K_Y+\Delta_Y+E)$ are Cartier on the plt blowup $Y\to X$ of $E$.
\end{lem}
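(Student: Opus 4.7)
The strategy is to apply Lemma \ref{lem:index bound via volume} at every closed point of $Y$ by exhibiting a klt pair whose local volume is uniformly bounded from below. Write $a:=A_{X,\Delta}(E)\in(0,A]$ for brevity. By Lemma \ref{lem:index bound via volume} applied at $x$, $K_X+\Delta$ has Cartier index at most $n^n/\varepsilon$ at $x$; combined with the rationality of $I$, this yields an integer $N_1=N_1(n,\varepsilon,I)$ such that $N_1(K_X+\Delta)$ is Cartier in a neighborhood of $x$. Shrinking $X$ to this neighborhood, the Cartier index of $K_Y+\Delta_Y+E=\pi^*(K_X+\Delta)$ at any $y\in Y\setminus E$ is bounded by $N_1$ (since $\pi$ is an isomorphism there and $E$ is locally trivial). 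Moreover $\ord_E(N_1\pi^*(K_X+\Delta))=N_1(1-a)\in\bZ$, so $N_1 a\in\bZ$ and $a$ is confined to a finite set of rationals depending only on $n,\varepsilon,A,I$.

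For the main case $y\in E$, introduce the effective $\bQ$-divisor
\[
\Delta':=\Delta_Y+\max\{0,\,1-a\}\cdot E,
\]
whose coefficients have denominator bounded in terms of $n,\varepsilon,A,I$. Since $(Y,\Delta_Y+E)$ is plt and we have only decreased the coefficient of $E$, the pair $(Y,\Delta')$ is klt. The key claim is that $\hvol(y,Y,\Delta')\ge\hvol(x,X,\Delta)\ge\varepsilon$. Any $v\in\Val^*_{Y,y}$ is a valuation on the common function field $K(X)=K(Y)$ whose center on $X$ is $\pi(y)=x$, so we may compare its normalized volumes on $Y$ and on $X$. For the log discrepancies: when $a\le 1$ the pair $(Y,\Delta')$ is precisely the crepant pullback of $(X,\Delta)$, giving $A_{Y,\Delta'}(v)=A_{X,\Delta}(v)$; when $a>1$ we have $\Delta'=\Delta_Y$ with $K_Y+\Delta'=\pi^*(K_X+\Delta)+(a-1)E$, giving $A_{Y,\Delta'}(v)=A_{X,\Delta}(v)+(a-1)v(E)\ge A_{X,\Delta}(v)$. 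For the volumes, the inclusion $\cO_{X,x}\hookrightarrow\cO_{Y,y}$ and the identity $\fa_m^X(v)=\fa_m^Y(v)\cap\cO_{X,x}$ give an injection $\cO_{X,x}/\fa_m^X(v)\hookrightarrow\cO_{Y,y}/\fa_m^Y(v)$, hence $\vol_X(v)\le\vol_Y(v)$. Multiplying yields $\hvol_{Y,\Delta'}(v)\ge\hvol_{X,\Delta}(v)\ge\varepsilon$, and taking infimum over $v\in\Val^*_{Y,y}$ proves the claim.

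Now applying Lemma \ref{lem:index bound via volume} to the klt pair $(Y,\Delta')$ at $y\in E$ bounds the Cartier index of every $\bQ$-Cartier Weil divisor there by $n^n/\varepsilon$; in particular it bounds the Cartier index of $E$. To handle $K_Y+\Delta_Y+E$, use the identity
\[
K_Y+\Delta_Y+E=\pi^*(K_X+\Delta)+a\cdot E,
\]
noting that the first summand is $N_1$-Cartier globally and the second becomes Cartier after multiplication by any common multiple of $N_1$ and the Cartier index of $E$ (using $N_1 a\in\bZ$). Taking $N$ to be the least common multiple of all these quantities---all depending only on $n,\varepsilon,A,I$---yields the desired integer. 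The step I expect to be most delicate is the volume comparison $\vol_X(v)\le\vol_Y(v)$; once the elementary ring-theoretic inclusion is invoked, the remainder is routine bookkeeping with Cartier indices and denominators.
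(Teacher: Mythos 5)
Your overall strategy---produce a klt pair on the plt blowup $Y$ whose local volume at points $y\in E$ is uniformly bounded below, then invoke Lemma \ref{lem:index bound via volume}---is precisely the content of the result [Z-mld\^{}K-1, (4.1)] that the paper cites in one line, so reproving it is a reasonable route. The ring-theoretic volume comparison $\vol_X(v)\le\vol_Y(v)$, which you flagged as the delicate step, is correct and unproblematic. The discreteness argument giving $N_1 a\in\bZ$ and the final Cartier-index bookkeeping are also fine.

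The genuine gap is a sign error in the log discrepancy comparison when $a:=A_{X,\Delta}(E)>1$. From $K_Y+\Delta'=\pi^*(K_X+\Delta)+(a-1)E$ with $\Delta'=\Delta_Y$, pulling back to a common resolution $Z\to Y\to X$ and reading off orders along a prime divisor $F$ with $\ord_F=v$ gives
\[
A_{Y,\Delta'}(v)=A_{X,\Delta}(v)-(a-1)\,v(E),
\]
not $A_{X,\Delta}(v)+(a-1)v(E)$. Indeed $\Delta_Y$ is \emph{larger} than the crepant pullback $\Delta_Y+(1-a)E$ (whose coefficient of $E$ is negative), and enlarging the boundary decreases log discrepancies. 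Concretely, for $0\in X=\bA^2$ with $\pi$ the point blowup ($a=2$) and $F$ the exceptional divisor of a further blowup at $y\in E$, one has $A_{Y,\Delta_Y}(F)=2$ but $A_{X,\Delta}(F)=3$. So in the case $a>1$ your inequality $A_{Y,\Delta'}(v)\ge A_{X,\Delta}(v)$ is false, and $\hvol(y,Y,\Delta')\ge\hvol(x,X,\Delta)$ does not follow by your argument.

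The gap is repairable, and in fact one can work with $\Delta_Y$ throughout. By pltness of $(Y,\Delta_Y+E)$ one has $A_{Y,\Delta_Y}(v)\ge v(E)$ for every $v\in\Val^*_{Y,y}$. When $a\le 1$ the corrected identity already gives $A_{Y,\Delta_Y}(v)=A_{X,\Delta}(v)+(1-a)v(E)\ge A_{X,\Delta}(v)$. When $a>1$, rewriting the identity as $A_{X,\Delta}(v)=A_{Y,\Delta_Y}(v)+(a-1)v(E)$ and using $v(E)\le A_{Y,\Delta_Y}(v)$ together with $a\le A$ gives $A_{X,\Delta}(v)\le a\cdot A_{Y,\Delta_Y}(v)\le A\cdot A_{Y,\Delta_Y}(v)$. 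In either case $\hvol_{Y,\Delta_Y}(v)\ge \min\{1,A^{-n}\}\,\hvol_{X,\Delta}(v)$, hence $\hvol(y,Y,\Delta_Y)\ge \min\{1,A^{-n}\}\,\varepsilon$. This weaker but still uniform constant (depending only on $n,\varepsilon,A$, matching the paper's citation) is all that the application of Lemma \ref{lem:index bound via volume} requires, and the rest of your proof then goes through.
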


\begin{proof}
By \cite[(4.1)]{Z-mld^K-1}, the local volumes $\hvol(y,Y,\Delta_Y)$ (where $y\in E$) are bounded from below by some constants that only relies on the given constants $n,\varepsilon,A$. Thus by Lemma \ref{lem:index bound via volume} we get the desired Cartier index bound.
\end{proof}

% The last result comes from the proof of \cite{Z-mld^K-1}*{Theorem 4.8}.

% \begin{lem} \label{lem:reduce coef>epsilon to bounded denominator}
% Let $n\in \bN$ and $\varepsilon>0$. Then there exists $r\in \bN^*$ and $\varepsilon_0>0$ depending only on $n,\varepsilon$ such that for any $n$-dimensional klt singularity $x\in (X,\Delta)$ with coefficients $\ge \varepsilon$ and $\hvol(x,X,\Delta)\ge \varepsilon$, there exists some effective $\bQ$-divisor $\Delta_+$ on $X$ such that $r\Delta_+$ has integer coefficients, $x\in (X,\Delta_+)$ is klt, and $\hvol(x,X,\Delta_+)\ge \varepsilon_0$.
% \end{lem}

% \begin{proof}
% After perturbing the coefficients (using e.g. Lemma \ref{lem:R to Q}) and replacing $\varepsilon$ by $\frac{\varepsilon}{2}$, we may assume that $\Coef(\Delta)\in \bQ$. The proof of \cite{Z-mld^K-1}*{Theorem 4.8} then applies. Note that although \cite{Z-mld^K-1}*{Theorem 4.8} assumes Conjecture \ref{conj:bdd for mldK, vol>epsilon}, this assumption is only used in the paragraph of its proof, and the remaining part of that proof exactly gives our stated lemma.
% \end{proof}

\section{Boundedness} \label{s:proof}

In this section, we prove the following statement on boundedness of log Fano cone singularities, which will imply the main results of this paper.

\begin{thm} \label{thm:polarized cone bdd}
Let $n\in \bN$ and let $I\subseteq [0,1]$ be a finite set. Let $\varepsilon,\theta,A>0$. Let $\cS$ be the set of $n$-dimensional polarized log Fano cone singularities $x\in (X,\Delta;\xi)$ with coefficients in $I$ such that
\[
\hvol(x,X,\Delta)\ge \varepsilon,\quad 
\Theta(X,\Delta;\xi)\ge \theta,\quad\mathrm{and}\quad
\mldk(x,X,\Delta)\le A.
\]
Then $\cS$ is bounded.
\end{thm}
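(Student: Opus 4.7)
The plan is to follow the strategy sketched in the introduction. First, using Lemma \ref{lem:R to Q}, I would reduce to the case where $\Delta$ has rational coefficients, adjusting $\varepsilon,\theta,A$ by universal factors. Then, using the continuity of $\hvol_{X,\Delta}(\wt_\xi)$ on the Reeb cone (Lemma \ref{lem:vol continuous in Reeb cone}), I would perturb $\xi$ to a nearby quasi-regular polarization while maintaining a uniform lower bound on $\Theta$. After this reduction, $X$ admits a good $\bG_m$-action (possibly after dividing by a finite subgroup), and $X=\Spec\bigoplus_{m\ge 0}H^0(V,mL)$ is an orbifold cone over the GIT quotient $(V,\Delta_V;L)$. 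Compactifying by a divisor at infinity $V_\infty$ gives a projective orbifold cone $\oX$ with boundary $\oDelta=\Delta_{\oX}+V_\infty$ and $-(K_{\oX}+\oDelta)$ ample. A direct computation shows that $(-(K_{\oX}+\oDelta))^n$ is proportional to $\hvol_{X,\Delta}(\wt_\xi)$ by a bounded rational factor depending on the polarization weight; combining the bounds $\hvol(x,X,\Delta)\in[\varepsilon,n^n]$ (Lemma \ref{lem:vol<=n^n}) with $\Theta\ge\theta$ yields that the anti-canonical volume of $\oX$ lies in a fixed compact interval $[c_1,c_2]$ depending only on $n,\varepsilon,\theta$.

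The heart of the proof, and the main obstacle, is an effective birationality statement for $\oX$: I seek an integer $m=m(n,\varepsilon,\theta,A,I)$ such that $|-m(K_{\oX}+\oDelta)|$ defines a birational map whose restriction to $X\subseteq\oX$ is an embedding. Let $E$ be a Kollár component over $x\in(X,\Delta)$ with $A_{X,\Delta}(E)\le A$. Lemma \ref{lem:Cartier index on plt blowup} bounds the Cartier index of $K_Y+\Delta_Y+E$ on the plt blowup of $E$; combined with the uniform volume lower bounds for points on $E$ established in \cite{Z-mld^K-1}, this gives a uniform lower bound $\alpha(E,\Delta_E)\ge\alpha_0>0$. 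Lemma \ref{lem:alpha gives Izumi const} then produces a uniform Izumi-type comparison $\lct_x(X,\Delta;D)\ge \alpha_0\cdot A_{X,\Delta}(E)/\ord_E(D)$ for any effective $\bR$-Cartier divisor $D$. The strategy at this point is: given any point $y\in\oX$, first cut out a divisor $D\sim_\bQ -t(K_{\oX}+\oDelta)$ of bounded coefficient $t$ with high multiplicity at $y$ (using the bounded anti-canonical volume of $\oX$ together with a Riemann--Roch style multiplicity estimate), then use the Izumi comparison to control how the non-klt locus propagates toward the vertex $x$, and finally tie-break to an isolated non-klt center through $y$. Nadel vanishing then produces sections of $-m(K_{\oX}+\oDelta)$ separating points and tangent directions for some uniform $m$, giving birationality; the embedding on $X$ is automatic because the polarization pulls back the hyperplane class to a power of the cone generator.

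Once effective birationality is established, a standard log birational boundedness argument (\emph{cf.} \cite{Birkar-bab-1}) places $(\oX,\oDelta)$ in a log bounded family. The vertex $x$ and the divisor $V_\infty$ are recovered intrinsically from the cone structure, so the $\bG_m$-action extends to this family, and Lemma \ref{lem:index bound via volume} forces the polarization $\xi$ to have bounded denominators, allowing only finitely many quasi-regular polarizations. The full torus $\bT$ sits inside $\Aut(X,\Delta)$, which has bounded connected components once $(X,\Delta)$ is bounded; passing to closures and stratifying, we obtain the finitely many $\bR$-Gorenstein families with good $\bT$-actions required by Definition \ref{defn:cone bdd}. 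The hardest step will undoubtedly be the effective birationality, since constructing isolated non-klt centers for bounded $m$ requires coupling global multiplicity estimates on $\oX$ with the delicate Izumi-type control of singularities near the cone vertex.
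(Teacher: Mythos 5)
Your overall framework matches the paper: reduce to rational coefficients (Lemma~\ref{lem:R to Q}), perturb to a quasi-regular polarization (Lemma~\ref{lem:vol continuous in Reeb cone}), pass to the projective orbifold cone $\oX$, establish effective birationality of $|-m(K_{\oX}+\oDelta+V_\infty)|$, then use log (birational) boundedness plus the automorphism group scheme to produce the required families with torus actions. You also correctly identify effective birationality (Proposition~\ref{prop:bir bdd proj cone}) as the technical core.

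However, your proposed route to effective birationality has two genuine gaps that differ from what the paper actually does. First, you plan to construct isolated non-klt centers through \emph{arbitrary} points $y\in\oX$ via multiplicity cutting and tie-breaking, and then somehow feed this into the Izumi estimate. This is the standard Hacon--McKernan/Birkar mechanism, but it does not mesh with Lemma~\ref{lem:alpha gives Izumi const}, which is a statement localized at the vertex $x$; it is unclear how ``Izumi controls the propagation of the non-klt locus toward $x$'' when the non-klt center is created at $y\neq x$. The paper's argument is different and cleaner: one only constructs an isolated non-klt center at the vertex $x$, using the fact that $\varphi^*H-aV_0$ is semiample (via the orbifold $\bP^1$-bundle $\oY\to V$), so a general member of this $\bQ$-linear system is klt away from $x$. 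Nadel vanishing then gives a surjection onto $H^0(\cO_{\oX}/\fa_{(m-1)a})$, and the remaining work is purely local at $x$: show $\fa_{(m-1)a}\subseteq\fm_x^2$ for $m$ depending only on $n,\varepsilon,A,I$. This requires bounding the Izumi-type constant $\inf\{\lct(f)\mid f\in\fm_x\setminus\fm_x^2\}$, and the essential missing ingredient in your sketch is the upper bound on $d_E:=\sup\{\ord_E(f)\mid f\in\fm_x\setminus\fm_x^2\}$, obtained in the paper by showing that the section ring $\bigoplus_k H^0(E,\lfloor kL\rfloor)\cong\bigoplus_k\fb_k/\fb_{k+1}$ is generated in bounded degree and deducing $\fb_{N+1}\subseteq\fm_x^2$ by descending induction. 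Without this $d_E$ bound, the $\alpha$-invariant bound alone does not control $\lct(f)$ uniformly.

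Second, your assertion that ``the embedding on $X$ is automatic because the polarization pulls back the hyperplane class to a power of the cone generator'' is not the correct reason and would not survive scrutiny. The paper's argument is that $|mH|$ separating tangent directions at $x$ forces the base locus and ramification locus of the induced map $\varphi|_X$ to avoid $x$; since both are $\bG_m$-invariant closed subsets of $X$ and the vertex $x$ lies in every orbit closure, they must be empty. Quasi-finiteness of $\varphi|_X$ plus $\bG_m$-equivariance then gives injectivity (any collision would contract an orbit closure to $\varphi(x)$), and unramified + injective gives an embedding. This $\bG_m$-propagation trick is what allows the entire argument to be localized at $x$ and is absent from your sketch. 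You also omit the step of bounding $\vol(-(K_{\oX}+\oDelta+V_\infty)|_{\oDelta})$ (needed for \emph{log} birational boundedness), which the paper gets from Lemmas~\ref{lem:klt type threshold} and~\ref{lem:cone klt type}; without it you cannot conclude that $\Supp(\oDelta)$ lands in a bounded family.
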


Recall that $\Theta(X,\Delta;\xi)$ is the volume ratio of the log Fano cone singularity (Definition \ref{def:vol ratio}). We refer to Sections \ref{ss:kc}--\ref{ss:bdd defn} for the other relevant definitions.

\subsection{Orbifold cones} \label{ss:orbifold}

Since the volume ratio is continuous in the Reeb vector $\xi$ (Lemma \ref{lem:vol continuous in Reeb cone}), by perturbing the polarization, we see that it suffices to prove Theorem \ref{thm:polarized cone bdd} when $\cS$ consists of quasi-regular log Fano cones. Every quasi-regular log Fano cone singularity has a natural affine orbifold cone structure induced by the polarization. The proof of Theorem \ref{thm:polarized cone bdd} relies on the associated projective orbifold cone construction. In this subsection, we first fix some notation and recall some basic properties of orbifold cones from \cite{Kol-Seifert-bundle}.

\begin{defn}
Let $V$ be a normal projective variety. Let $L$ be an ample $\bQ$-Cartier $\bQ$-divisor on $V$. 
\begin{enumerate}
    \item The \emph{affine orbifold cone} $C_a(V,L)$ is defined as
    \[
    C_a(V,L):=\Spec\bigoplus_{m=0}^\infty H^0(V,\cO_{V}(\lfloor mL \rfloor)).
    \]
    \item The \emph{projective orbifold cone} $C_p(V,L)$ is defined as
    \[
    C_p(V,L):=\Proj\bigoplus_{m=0}^\infty\bigoplus_{i=0}^\infty H^0(V,\cO_{V}(\lfloor mL \rfloor)\cdot s^i,
    \]
    where the grading of $H^0(V,\cO_{V}(\lfloor mL \rfloor))$ and $s$ are $m$ and $1$, respectively.
\end{enumerate}
\end{defn}

For ease of notation, denote $X=C_a(V,L)$, $\oX=C_p(V,L)$ and let $x\in X$ be the vertex of the orbifold cone. On the projective orbifold cone, we also denote by $V_\infty$ the divisor at infinity, i.e., the divisor corresponding to $(s=0)$. We have $X\cong \oX\setminus V_\infty$. Note that $X\setminus \{x\}$ is a Seifert $\bG_m$-bundle over $V$ (in the sense of \cite{Kol-Seifert-bundle}). Thus for any effective $\bR$-divisor $\Delta_V$ on $V$, we can define the affine (resp. projective) orbifold cone over the polarized pair $(V,\Delta_V;L)$ as the pair $(X,\Delta)$ (resp. $(\oX,\oDelta)$), where  $\Delta$ (resp. $\oDelta$) is the closure of the pullback of $\Delta_V$ to $X\setminus \{x\}$ (since the projection $X\setminus \{x\}\to V$ is equidimensional, the pullback of a Weil divisor is well-defined). Every quasi-regular polarized log Fano cone singularity $x\in (X,\Delta;\xi)$ arises in this way, so we can talk about its associated projective orbifold cone $(\oX,\oDelta)$.

Alternatively, the Seifert $\bG_m$-bundle $X\setminus \{x\}$ can be compactified by adding $V_\infty$ and the missing zero section $V_0$; the resulting space $\oY$ is also the orbifold blowup of $\oX$ at $x$. Similarly, let $Y$ be the orbifold blowup of $X$ at $x$ (i.e. $Y=\oY\setminus V_\infty$). Write the fractional part of $L$ as $\{L\}=\sum_{i=1}^k \frac{a_i}{b_i}D_i$ for some prime divisors $D_i$ on $V$ and $0<a_i<b_i$ coprime integers. We denote $\Delta_L:=\sum_{i=1}^k\frac{b_i-1}{b_i}D_i$. Then the pairs $(V_0,\Diff_{V_0}(0))$ and $(V_\infty, \Diff_{V_\infty}(0))$ obtained by taking adjunction over some big open set of $V$ from the pair $(\oX, V_0+V_\infty)$ are both isomorphic to $(V,\Delta_L)$ (this is a local computation, see \cite[Section 4]{Kol-Seifert-bundle}).

Under these notation, we have the following well known result \cite{Kol-Seifert-bundle} (\emph{cf.} \cite[Section 3.1]{Kol13} for analogous statement for usual cones).

\begin{lem} \label{lem:orbifold cone property}
The following conditions are equivalent:    
\begin{enumerate}
    \item $x\in (X,\Delta)$ is klt;
    \item $(\oX,\oDelta+V_\infty)$ is plt;
    \item $(V,\Delta_V+\Delta_L)$ is a log Fano pair, and $-(K_V+\Delta_V+\Delta_L)\sim_\bR rL$ for some $r>0$.
\end{enumerate}
Moreover, when the above conditions are satisfied, we have $K_{\oX}+\oDelta\sim_{\bR}-(1+r)V_\infty$ and $A_{X,\Delta}(V_0)=r$.
\end{lem}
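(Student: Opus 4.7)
The plan is to reduce all three implications and the moreover part to two intersection-theoretic facts about the orbifold cone, then apply adjunction and inversion of adjunction at $V_\infty$ and $V_0$. The two facts are: (I) whenever $K_X+\Delta$ is $\bR$-Cartier, the cone divisor $K_{\oX}+\oDelta$ is $\bR$-linearly equivalent to a multiple $cV_\infty$, because the $\bG_m$-invariance of $K_X+\Delta$ on the affine cone forces it to be $\bR$-linearly trivial on $X$ (the only $\bG_m$-invariant regular functions being constants); (II) $V_\infty|_{V_\infty}\sim_\bR L$ on $V_\infty\cong V$, and analogously $V_0|_{V_0}\sim_\bR -L$ after pulling back to $Y$. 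I will verify (II) by a standard self-intersection computation on the orbifold $\bP^1$-bundle $\oY\to V$, where $V_0$ and $V_\infty$ are disjoint sections satisfying $V_\infty-V_0\sim_\bR \pi_V^*L$; both (I) and (II) ultimately reduce to the case where $L$ is Cartier by restricting to a big open subset of $V$, since $\bR$-linear equivalences of Weil divisors are insensitive to codimension $\geq 2$ loci.

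The equivalence $(1)\Leftrightarrow(3)$ is the standard orbifold cone klt criterion from \cite{Kol-Seifert-bundle}, which can also be recovered directly from (I)--(II) by computing log discrepancies on the orbifold blowup $Y\to X$. The implication $(2)\Rightarrow(1)$ is the trivial restriction of plt on $\oX$ to the open subset $X=\oX\setminus V_\infty$. For the remaining direction $(3)\Rightarrow(2)$, using (I)--(II) together with the adjunction relation $(K_{\oX}+\oDelta+V_\infty)|_{V_\infty} = K_V+\Delta_V+\Delta_L$ recorded before the statement, assumption (3) will force
\[
K_{\oX}+\oDelta+V_\infty\sim_\bR -rV_\infty.
\]
Klt-ness of the different $(V,\Delta_V+\Delta_L)$ from (3), combined with inversion of adjunction \cite[Theorem 5.50]{KM98} at $V_\infty$, then delivers plt along $V_\infty$; plt away from $V_\infty$ is exactly klt of $(X,\Delta)$, which is already available via $(3)\Rightarrow(1)$.

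For the moreover part, the linear equivalence $K_{\oX}+\oDelta\sim_\bR -(1+r)V_\infty$ is precisely the content of the previous display, rewritten via $c=-(1+r)$. For $A_{X,\Delta}(V_0)=r$, I will work on the orbifold blowup $\pi\colon Y\to X$: by (I) applied to $X$ one has $K_X+\Delta\sim_\bR 0$, so pulling back yields $K_Y+\Delta_Y\sim_\bR (A_{X,\Delta}(V_0)-1)V_0$; restricting to $V_0$ via the adjunction $(K_Y+\Delta_Y+V_0)|_{V_0}=K_V+\Delta_V+\Delta_L$ and the analog of (II) at $V_0$, we obtain
\[
-A_{X,\Delta}(V_0)\cdot L\sim_\bR K_V+\Delta_V+\Delta_L\sim_\bR -rL,
\]
hence $A_{X,\Delta}(V_0)=r$ by comparing coefficients of the ample class $L$. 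The hard part will be tracking the orbifold contribution $\Delta_L$ carefully through the adjunction formulas and verifying (I)--(II) in the orbifold setting; the cleanest route is to pass to a big open subset of $V$ where $L$ is Cartier and the Seifert bundle becomes a genuine $\bP^1$-bundle, which handles this at the expense of only introducing codimension $\geq 2$ loci that do not affect any of the $\bR$-linear equivalences or log-discrepancy computations we care about.
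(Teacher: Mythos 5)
The paper gives no proof of this lemma — it is cited as well known from \cite{Kol-Seifert-bundle} (with \cite[Section~3.1]{Kol13} as a reference for the cone case), so there is no ``paper proof'' to compare against. Your proposal follows the standard route those references take: reduce to intersection-theoretic identities on the $\bP^1$-bundle $\oY\to V$, then use adjunction and inversion of adjunction at $V_0$ and $V_\infty$. The overall architecture is correct, the three implications and the moreover part are handled in a sensible order, and the final comparison of the coefficient of the ample class $L$ to get $A_{X,\Delta}(V_0)=r$ is exactly right.

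One piece of the reasoning is not quite right as written. In fact (I) you assert that $K_X+\Delta$ being $\bR$-Cartier forces $K_X+\Delta\sim_\bR 0$ on $X$ ``because the only $\bG_m$-invariant regular functions are constants.'' The parenthetical does not by itself yield the conclusion: the fact that $R_0=\bk$ says invariant units are scalars, but says nothing about whether an invariant Cartier divisor admits a global trivializing section. The correct reason is that $\Pic(X)=0$ for the normal affine cone $X=C_a(V,L)$ (every Cartier divisor on $X$ is principal); this is e.g.\ in \cite[Section~3.1]{Kol13}, or can be seen analytically since $X^{\mathrm{an}}$ is Stein and contractible. With that fixed, the rest of (I) (that $K_{\oX}+\oDelta$ differs from a principal divisor by something supported on $V_\infty$, hence is $\bR$-Cartier and $\sim_\bR cV_\infty$) goes through as you describe. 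I'd also suggest making explicit, in the direction $(3)\Rightarrow(2)$, that the $\bR$-Cartierness of $K_{\oX}+\oDelta+V_\infty$ — which you need before invoking inversion of adjunction — is obtained precisely from $(3)\Rightarrow(1)$ followed by (I); as written it is implicit and a reader might worry about circularity.
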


The next result is a key observation in the proof of Theorem \ref{thm:polarized cone bdd}. It expresses the normalized volume of a polarized log Fano cone singularities as the global volume of the associated projective orbifold cone.

\begin{lem} \label{lem:Sasaki vol as global vol}
Assume that $x\in (X,\Delta)$ is klt. Then under the notation of Lemma \ref{lem:orbifold cone property} we have
\[
\hvol_{X,\Delta}(\ord_{V_0})=\vol(-(K_{\oX}+\oDelta+V_{\infty}))=r\cdot \vol(-(K_V+\Delta_V+\Delta_L)).
\]
\end{lem}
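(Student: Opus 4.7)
The plan is to verify that all three expressions equal $r^n \cdot L^{n-1}$, using asymptotic Riemann--Roch on $V$ for the first equality and adjunction along $V_\infty$ on $\oX$ for the second.

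First I would compute $\hvol_{X,\Delta}(\ord_{V_0})$. By Lemma \ref{lem:orbifold cone property} we already know $A_{X,\Delta}(V_0)=r$, so it remains to identify $\vol_{X,x}(\ord_{V_0})$. By construction of the affine orbifold cone, the valuation $\ord_{V_0}$ is precisely the valuation reading off the natural grading of $R = \bigoplus_{m\ge 0} H^0(V,\cO_V(\lfloor mL\rfloor))$, so the valuation ideals are
\[
\fa_k(\ord_{V_0}) = \bigoplus_{m\ge k} H^0\bigl(V,\cO_V(\lfloor mL\rfloor)\bigr),
\]
and therefore $\ell(R/\fa_k)=\sum_{m=0}^{k-1}h^0(V,\lfloor mL\rfloor)$. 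Applying asymptotic Riemann--Roch for the ample $\bQ$-divisor $L$ on the $(n-1)$-dimensional normal projective variety $V$, one has $h^0(V,\lfloor mL\rfloor)=\frac{L^{n-1}}{(n-1)!}m^{n-1}+O(m^{n-2})$, and summing gives $\vol_{X,x}(\ord_{V_0})=L^{n-1}$. Combined with $A_{X,\Delta}(V_0)=r$, this yields $\hvol_{X,\Delta}(\ord_{V_0})=r^n\cdot L^{n-1}$.

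Next I would compute the global volume on $\oX$. From $K_{\oX}+\oDelta\sim_{\bR}-(1+r)V_\infty$ (Lemma \ref{lem:orbifold cone property}) one obtains $-(K_{\oX}+\oDelta+V_\infty)\sim_\bR rV_\infty$, reducing the question to $V_\infty^n$. Since $(\oX,\oDelta+V_\infty)$ is plt, $V_\infty$ is normal and adjunction applies; by the setup recalled before Lemma \ref{lem:orbifold cone property}, the different is given by $(K_{\oX}+\oDelta+V_\infty)|_{V_\infty}=K_V+\Delta_V+\Delta_L\sim_\bR -rL$. Restricting $-(K_{\oX}+\oDelta+V_\infty)\sim_\bR rV_\infty$ to $V_\infty$ therefore gives $V_\infty|_{V_\infty}\sim_\bR L$, whence $V_\infty^n=L^{n-1}$ and $\vol(-(K_{\oX}+\oDelta+V_\infty))=r^n L^{n-1}$. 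The remaining equality is immediate: $r\cdot\vol(-(K_V+\Delta_V+\Delta_L))=r\cdot(rL)^{n-1}=r^n L^{n-1}$.

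The main subtlety is bookkeeping rather than a genuine obstacle. One must check that $V_\infty$ is $\bR$-Cartier on $\oX$ so that $V_\infty^n$ is well-defined and $\vol(rV_\infty)=r^n V_\infty^n$, which follows from its being the ``$\cO(1)$'' of the $\Proj$ construction (and is already implicit in the $\bR$-linear equivalence $K_\oX+\oDelta\sim_\bR -(1+r)V_\infty$). Once these routine points are in place, the three short computations above identify all three quantities with $r^n L^{n-1}$ and the lemma follows.
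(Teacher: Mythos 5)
Your proposal is correct and follows essentially the same path as the paper: both reduce all three quantities to $r^n L^{n-1}$ using $A_{X,\Delta}(V_0)=r$, the identification of $\ord_{V_0}$ as the grading valuation (so $\vol(\ord_{V_0})=L^{n-1}$), the relation $-(K_{\oX}+\oDelta+V_\infty)\sim_\bR rV_\infty$, and adjunction along $V_\infty\cong V$. The only cosmetic difference is that you derive $\vol_{X,x}(\ord_{V_0})=L^{n-1}$ by an explicit asymptotic Riemann--Roch count on $V$, whereas the paper invokes the standard cone formula $\vol(\ord_{V_0})=\vol(-V_0|_{V_0})$ directly.
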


\begin{proof}
This follows from a direct calculation. First we have $A_{X,\Delta}(V_0)=r$ by Lemma \ref{lem:orbifold cone property}. We also have $-V_0|_{V_0}\cong L$, hence 
\[
\hvol_{X,\Delta}(\ord_{V_0})=A_{X,\Delta}(V_0)^n\cdot\vol_{X,\Delta}(\ord_{V_0})= r^n\vol(-V_0|_{V_0})=r^n L^{n-1}
\]
where $n=\dim X$. On the other hand, by Lemma \ref{lem:orbifold cone property} we get $-(K_{\oX}+\oDelta+V_{\infty})\sim_\bR rV_\infty$; recall also that $-(K_{\oX}+\oDelta+V_{\infty})|_{V_\infty}\sim_\bR -(K_V+\Delta_V+\Delta_L)\sim_\bQ rL$ (where we identify $V_\infty$ with $V$), thus
\begin{align*}
    \vol(-(K_{\oX}+\oDelta+V_{\infty})) & = \left( -(K_{\oX}+\oDelta+V_{\infty})\right)^{n-1}\cdot (rV_\infty) \\
    & = r\cdot \vol(-(K_V+\Delta_V+\Delta_L)) = r\cdot (rL)^{n-1} = r^n L^{n-1},
\end{align*}
which proves the desired equality.
\end{proof}

We also need a slight generalization of Lemma \ref{lem:orbifold cone property}.

\begin{lem} \label{lem:cone klt type}
Notation as before. Assume that $x\in (X,\Delta)$ is of klt type. Then $-(K_V+\Delta_V+\Delta_L)$ is big.
\end{lem}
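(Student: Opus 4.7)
My plan is to establish the lemma by a direct computation on the orbifold blowup extracting $V_0$. Let $D$ be an effective $\bR$-divisor on $X$ such that $(X, \Delta+D)$ is klt, let $\pi\colon Y \to X$ denote the orbifold blowup of $x$ (so $V_0 \cong V$ is the unique exceptional divisor), and set $a := A_{X, \Delta+D}(V_0)$. Since $(X, \Delta+D)$ is klt, $K_X+\Delta+D$ is $\bR$-Cartier and $a > 0$. Pulling back by $\pi$ gives
\[
\pi^*(K_X + \Delta + D) = K_Y + \Delta_Y + D_Y + (1-a)V_0.
\]

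I would then restrict both sides to $V_0 \cong V$. The left hand side vanishes as a divisor class, because $\pi$ contracts $V_0$ to the point $x$. For the right hand side, I use three inputs from the orbifold cone geometry recalled before Lemma \ref{lem:orbifold cone property}: adjunction yields $(K_Y+V_0)|_{V_0} = K_V + \Delta_L$; the normal bundle of the zero section satisfies $-V_0|_{V_0} = L$ (since $Y$ is the total space of $L^\vee$ over $V$ in the orbifold sense); and $\Delta_Y$ is the pullback of $\Delta_V$ along $Y \to V$, which restricts to $\Delta_V$ on $V_0$. Combining these, the restricted equation reads
\[
0 = K_V + \Delta_V + \Delta_L + D_Y|_{V_0} + aL,
\]
or equivalently
\[
-(K_V + \Delta_V + \Delta_L) \sim_{\bR} aL + D_Y|_{V_0}.
\]

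Since $a > 0$ and $L$ is an ample $\bQ$-Cartier $\bQ$-divisor, $aL$ is ample. The term $D_Y|_{V_0}$ is an effective $\bR$-divisor on $V$, because $V_0$ is exceptional for $\pi$ and therefore not contained in the support of $D_Y$, so the restriction is well defined and nonnegative. Consequently $-(K_V+\Delta_V+\Delta_L)$ is $\bR$-linearly equivalent to the sum of an ample and an effective divisor, hence is big.

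The one delicate point is that $D$ need not be $\bT$-invariant, so $D_Y$ has no direct cone structure; but the calculation does not require this, only that the strict transform $D_Y$ restricts to an effective divisor on $V_0$, which follows from $V_0$ being exceptional. Apart from that, the argument is structurally identical to the klt case of Lemma \ref{lem:orbifold cone property}, with the single new ingredient being the extra effective term $D_Y|_{V_0}$ on the right hand side that upgrades ampleness of $-(K_V+\Delta_V+\Delta_L)$ (in the klt case) to bigness (in the of klt type case).
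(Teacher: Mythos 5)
Your proof is correct and is essentially the same argument as the paper's. The paper writes the relative $\bR$-linear equivalence $K_Y+\Delta_Y+D_Y+V_0 \sim_\bR A_{X,\Delta+D}(V_0)\cdot V_0$ over $X$ and restricts to $V_0$, whereas you pull back $K_X+\Delta+D$ and restrict; these are the same computation, and both conclude $-(K_V+\Delta_V+\Delta_L)\sim_\bR aL + D_Y|_{V_0}$ with $a>0$, which is ample plus effective, hence big.
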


\begin{proof}
% Let $\Delta_Y$ (resp. $D_Y$) be the strict transform of $\Delta$ (resp. $D$) on $Y$.
By assumption, there exists some effective $\bR$-divisor $D$ such that $x\in (X,\Delta+D)$ is klt (note that $D$ is not necessarily invariant under the $\bG_m$-action). In particular, $A_{X,\Delta+D}(V_0)>0$ and we have
\[
K_Y+\Delta_Y+D_Y+V_0\sim_\bR A_{X,\Delta+D}(V_0)\cdot V_0
\]
over $X$. It follows that $-(K_Y+\Delta_Y+D_Y+V_0)|_{V_0}\sim_\bR A_{X,\Delta+D}(V_0)\cdot L$ is ample. By adjunction, this implies that 
\[
-(K_V+\Delta_V+\Delta_L)=-(K_Y+\Delta_Y+D_Y+V_0)|_{V_0}+D_Y|_{V_0}
\]
is the sum of an ample divisor and an effective divisor, hence is big.
\end{proof}

\subsection{Effective birationality} \label{ss:bir bdd}

Given Lemma \ref{lem:Sasaki vol as global vol}, a na\"ive idea to prove Theorem \ref{thm:polarized cone bdd} is to associate to each log Fano cone singularity a projective orbifold cone and show that the corresponding set of projective orbifold cones is bounded. In general, this is too much to hope for, as the projective orbifold cone depends on the (auxiliary) choice of a quasi-regular polarization. The following example shows that even for a fixed singularity we can get an unbounded family of projective orbifold cones by choosing different polarizations.

\begin{expl}
Let $a_1,\dots,a_n\in\bN$ be pairwise coprime positive integers. Then $\xi=(a_1,\dots,a_n)\in \bN^n$ gives a polarization of the Fano cone singularity $0\in\bA^n$; it generates the $\bG_m$-action with weights $a_1,\dots,a_n$ on the coordinates. This endows $\bA^n$ with an affine orbifold cone structure $C_a(V,L)$ where $V=\bP(a_1,\dots,a_n)$ and $L=\cO_V(1)$. The associated projective orbifold cone is $\oX=\bP(1,a_1,\dots,a_n)$, which is clearly unbounded as the weights $a_i$'s vary. By choosing appropriate weights, we can even ensure that the normalized volume $\hvol(\wt_\xi)=\frac{(a_1+\dots+a_n)^n}{a_1\dots a_n}$ is fixed.
\end{expl}

Nonetheless, we observe that in the above example the projective orbifold cones we get satisfy the following interesting property: the linear system $|-K_{\oX}|$ always defines a birational map that is an embedding at the vertex $[1:0:\dots:0]$. In fact, if $[s:x_1:\dots:x_n]$ are the weighted homogeneous coordinates of $\oX$, then for every $i\in \{1,\dots,n\}$ there exists some $k_i\in \bN$ such that $s^{k_i} x_i\in H^0(-K_{\oX})$ (this is possible because $s$ has weight $1$); it is not hard to see that the sub-linear system spanned by $s^{k_i} x_i$ ($i=0,\dots,n$) is base point free and restricts to an embedding on the affine chart $\bA^n=\oX\setminus (s=0)$.

This motivates us to raise the following question.

\begin{que}
Let $n\in\bN$ and $\varepsilon>0$. Let $I\subseteq [0,1]\cap \bQ$ be a finite set. Is there some integer $m=m(n,\varepsilon,I)>0$ such that for any $n$-dimensional quasi-regular polarized log Fano cone $x\in (X,\Delta;\xi)$ with $\hvol(x,X,\Delta)\ge \varepsilon$, we have that $|-m(K_{\oX}+\oDelta+V_\infty)|$ defines a birational map that restricts to an embedding on $X$?
\end{que}

As before, $(\oX,\oDelta)$ denotes the associated projective orbifold cone, and $V_\infty=\oX\setminus X$ is the divisor at infinity.

The technical core of the proof of Theorem \ref{thm:polarized cone bdd} is the answer to this question after imposing upper bounds on the minimal log discrepancies of Koll\'ar components.

\begin{prop} \label{prop:bir bdd proj cone}
Let $n\in\bN$ and $\varepsilon,A>0$. Let $I\subseteq [0,1]\cap \bQ$ be a finite set. Then there is some integer $m=m(n,\varepsilon,A,I)>0$ such that for any $n$-dimensional quasi-regular polarized log Fano cone $x\in (X,\Delta;\xi)$ with $\hvol(x,X,\Delta)\ge \varepsilon$ and $\mldk(x,X,\Delta)\le A$, we have that $|-m(K_{\oX}+\oDelta+V_\infty)|$ defines a birational map that restricts to an embedding on $X$.
\end{prop}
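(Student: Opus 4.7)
The plan is to apply the classical effective-birationality framework—producing isolated non-klt centers via volume estimates and tie-breaking, then lifting sections by Nadel vanishing—to the projective orbifold cone $(\oX,\oDelta+V_\infty)$, using the Koll\'ar component coming from $\mldk\leq A$ to supply the uniform local bounds at the vertex $x$.

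First I would extract the basic numerical inputs. By Lemma \ref{lem:Sasaki vol as global vol}, the global volume $\vol(-(K_{\oX}+\oDelta+V_\infty))=\hvol_{X,\Delta}(\ord_{V_0})\geq\hvol(x,X,\Delta)\geq\varepsilon$, and by Lemma \ref{lem:orbifold cone property} the pair $(\oX,\oDelta+V_\infty)$ is plt, hence klt on $X=\oX\setminus V_\infty$. Next, fix a Koll\'ar component $E$ over $x\in(X,\Delta)$ with $A_{X,\Delta}(E)\leq A+1$; by the special boundedness results of \cite{Z-mld^K-1} used in the proof of Lemma \ref{lem:Cartier index on plt blowup}, the family of such $E$ (and of the adjoint log Fano $(E,\Delta_E)$) is bounded in terms of $n,\varepsilon,A,I$. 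This yields uniform constants for (i) the Cartier indices of $E$ and $K_Y+\Delta_Y+E$ on the plt blowup $Y\to X$, (ii) a positive lower bound on $\min\{1,\alpha(E,\Delta_E)\}$, and (iii) Izumi-type comparisons between $\ord_E$ and $\ord_{\fm_x}$. Combined with Lemma \ref{lem:index bound via volume} applied at points of $V_\infty$ (whose local structure is governed by the adjoint klt log Fano $(V,\Delta_V+\Delta_L)$), these give a uniform $N_0$ making $-N_0(K_{\oX}+\oDelta+V_\infty)$ Cartier.

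To prove the proposition it suffices to separate every pair of distinct points $y_1,y_2\in X$ together with tangent directions at a general point. The volume lower bound, combined with Riemann--Roch and Kawamata--Viehweg vanishing on the plt pair $(\oX,\oDelta+V_\infty)$, produces an effective divisor $D_1\sim_\bQ -m(K_{\oX}+\oDelta+V_\infty)$ with arbitrarily large prescribed multiplicity at $y_1$ for some $m$ bounded in terms of $n$ and $\varepsilon$. Tie-breaking then yields $c\in(0,1)$ for which $(\oX,\oDelta+V_\infty+cD_1)$ has a minimal non-klt center $Z\ni y_1$; auxiliary general members of $|-m'(K_{\oX}+\oDelta+V_\infty)|$ (with $m'$ uniformly bounded) cut $Z$ down to an isolated non-klt center at $y_1$, after which Nadel vanishing lifts a section of a uniform multiple of $-(K_{\oX}+\oDelta+V_\infty)$ separating $y_1$ from $y_2$.

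The main obstacle is to carry out the tie-breaking uniformly when $y_1$ equals, or lies very close to, the vertex $x$: there the global singularities of $\oX$ afford no effective control on log canonical thresholds, and a naive argument loses bounds as the Weil index of $L$ varies. This is where Lemma \ref{lem:alpha gives Izumi const} and the Izumi constants enter essentially: the lemma yields $\lct_x(X,\Delta;D)\geq\min\{1,\alpha(E,\Delta_E)\}\cdot A_{X,\Delta}(E)/\ord_E(D)$, and the Izumi comparisons between $\ord_E$ and $\ord_{\fm_x}$, uniform in the bounded family of Koll\'ar components, allow one to translate multiplicity at $x$ into controlled $\ord_E$-values and hence into uniform log-canonical-threshold estimates. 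A careful analysis of these Izumi constants, together with the verification (using the product structure $\oX\setminus\{x\}\to V$ near $V_\infty$) that the resulting non-klt centers can be kept inside $X$, is the technical heart of the argument. Once these uniform estimates are in hand, the standard effective-birationality machinery delivers the required $m=m(n,\varepsilon,A,I)$.
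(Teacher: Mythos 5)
Your proposal correctly identifies several of the right ingredients (the volume identity from Lemma~\ref{lem:Sasaki vol as global vol}, the uniform Cartier index, the Izumi-type lower bound on $\lct_x$ via Lemma~\ref{lem:alpha gives Izumi const} and the boundedness of the Koll\'ar component), but the overall strategy diverges from the paper's and has a genuine gap at its center. You propose to run the classical effective-birationality machinery, creating an isolated non-klt center at a general point $y_1$ via Riemann--Roch and tie-breaking and then lifting sections by Nadel vanishing. That machinery, even if carried out uniformly, only yields that $|mH|$ (with $H=-(K_{\oX}+\oDelta+V_\infty)$) defines a \emph{birational map}; it does not give an embedding on all of $X$. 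The proposition asks for much more: separation of \emph{every} pair of points of $X$ and of tangent directions at \emph{every} point, including the vertex and the positive-dimensional singular locus. Your proposal acknowledges the difficulty near the vertex and wants to patch it with Izumi estimates, but the Izumi/Koll\'ar-component control gives $\lct$ bounds only at $x$, not along the rest of the singular locus; and you have no mechanism to promote generic separation to global separation. As written, the argument would stall at a birational statement that is strictly weaker than what is needed.

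The paper's proof avoids this entirely by exploiting the $\bG_m$-action in two essential ways that are missing from your proposal. First, instead of creating non-klt centers at general points, it creates one \emph{exactly at the vertex}: on the orbifold blowup $\oY\to\oX$ one has $\varphi^*H-aV_0\sim_\bQ -\pi^*(K_V+\Delta_V+\Delta_L)$, which is semiample, so a general member $D\sim_\bQ H$ of $\varphi_*|\varphi^*H-aV_0|_\bQ$ makes $(\oX,\oDelta+mD)$ klt away from $x$ with $\cJ(\oX,\oDelta+mD)\subseteq\fa_{(m-1)a}(\ord_{V_0})$; this replaces Riemann--Roch and tie-breaking altogether. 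Second, after Nadel vanishing gives a surjection onto $\cO_{\oX}/\fa_{(m-1)a}$, the Izumi analysis (exactly the one you sketch, via $\alpha(E,\Delta_E)$, $A_{X,\Delta}(E)\geq 1/m_0$, and the bound $d_E\leq N$ coming from generation degree of the section ring of $(E,L)$) is used only to show $\fa_{(m-1)a}\subseteq\fm_x^2$ for bounded $m$, i.e.\ tangent-direction separation \emph{at the single point $x$}. The step that closes the argument, and which your proposal never makes, is the $\bG_m$-equivariance bootstrap: the base locus and ramification locus of $\varphi$ are $\bG_m$-invariant closed subsets of $X$ not containing $x$, hence empty (because every orbit closure contains $x$); quasi-finiteness plus the same orbit-closure argument then forces $\varphi|_X^{-1}(\varphi(x))=\{x\}$ scheme-theoretically and, by upper semicontinuity, degree one fibers everywhere. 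Without this equivariance argument, separation at $x$ does not propagate to an embedding on $X$, and your approach cannot conclude.
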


We remark that the integer $m$ is independent of the polarization $\xi$.

\begin{proof}
We start with some reductions. Let $H:=-(K_{\oX}+\oDelta+V_\infty)$. We view $X$ as an affine orbifold cone $C_a(V,L)$ and keep the notation (e.g. $\Delta_V,\Delta_L$, etc) from Section \ref{ss:orbifold}. First note that since the coefficients of $\oDelta$ belong to a fixed finite set $I$ of rational numbers, we can choose some $m_0$ depending only on $I$ such that $m_0 H$ has integer coefficients. Since $\hvol(x,X,\Delta)\ge \varepsilon$, by Lemma \ref{lem:index bound via volume} we know that the Cartier index of $m_0 H$ at $x$ is bounded from above by $\frac{n^n}{\varepsilon}$. Thus replacing $m_0$ by a sufficiently large fixed multiple (e.g. $\lfloor\frac{n^n}{\varepsilon}\rfloor !$) we may further assume that $m_0 H$ is Cartier in a neighbourhood of $x$. By $\bG_m$-translation, this implies that $m_0 H$ is Cartier on $X$.

The next step is to produce enough sections (of some multiple of $m_0 H$) that do not vanish at $x$. We do this by creating isolated non-klt centers at $x$ and apply Nadel vanishing. Let $\varphi\colon\oY\to \oX$ be the orbifold blowup of the vertex $x$, and let $\Delta_{\oY}$ be the strict transform of $\oDelta$. Let $V_0$ be the exceptional divisor as before. Recall that $\oY$ has an orbifold $\bP^1$-bundle structure $\pi\colon \oY\to V$. Then we have (see \cite[Proposition 40 and Corollary 41]{Kol-Seifert-bundle})
\[
-(K_{\oY}+\Delta_{\oY}+V_0+V_\infty)\sim_\bQ -\pi^*(K_V+\Delta_V+\Delta_L).
\]
The left hand side is $\sim_\bQ \varphi^*H-aV_0$ where $a=A_{X,\Delta}(V_0)>0$, while the right hand side is semiample since $-(K_V+\Delta_V+\Delta_L)$ is ample on $V$ by Lemma \ref{lem:orbifold cone property}. From here we deduce that $\varphi^*H-aV_0$ is semiample. For any positive integer $m$, if we take $D$ to be a general member of the $\bQ$-linear system $\varphi_*|\varphi^*H-aV_0|_\bQ$, then $D\sim_\bQ H$ and $(\oX,\oDelta+mD)$ is klt away from $x$ (since this is the only base point of the $\bQ$-linear system). In particular, the multiplier ideal $\cJ(\oX,\oDelta+mD)$ is co-supported at $x$. Furthermore, we have
\[
\ord_{V_0}\cJ(\oX,\oDelta+mD)>m\cdot \ord_{V_0}(D)-A_{\oX,\oDelta}(V_0) = (m-1)a
\]
by the definition of multiplier ideals. Thus we obtain 
\begin{equation} \label{eq:multiplier contained in valuation ideal}
    \cJ(\oX,\oDelta+mD)\subseteq \fa_{(m-1)a},
\end{equation}
where $\fa_{(m-1)a}:=\fa_{(m-1)a}(\ord_{V_0})$ denotes the valuation ideals. Recall that $m_0 H$ is Cartier at $x$, hence as
\[
mH-(K_{\oX}+\oDelta+mD)\sim_\bQ -(K_{\oX}+\oDelta)
\]
is ample, by the following Lemma \ref{lem:Nadel vanishing for Weil div} (Nadel vanishing for Weil divisor), we have
\[
H^1(\oX, \cJ(\oX,\oDelta+mD)\otimes \cO_{\oX}(mH))=0
\]
for all $m\in\bN$ that's divisible by $m_0$. From the long exact sequence we then deduce that the natural map
\[
H^0(\oX,\cO_{\oX}(mH))\to H^0(\oX,\cO_{\oX}/\cJ(\oX,\oDelta+mD))
\]
is surjective. Combined with \eqref{eq:multiplier contained in valuation ideal}, we get a surjection
\begin{equation} \label{eq:separate jets from valuation ideal}
    H^0(\oX,\cO_{\oX}(mH))\to H^0(\oX,\cO_{\oX}/\fa_{(m-1)a})
\end{equation}
for all $m\in\bN$ that's divisible by $m_0$. 

At this point, we have produced sections of $mH$ that separate the jets in $\cO_{\oX}/\fa_{(m-1)a}$. Our goal is to make $|mH|$ into a birational map that restricts to an embedding on $X$. Clearly, a necessary condition is that $|mH|$ separates tangent directions at $x$. Let us first show that this latter condition can be achieved. In view of \eqref{eq:separate jets from valuation ideal}, it suffices to show that 
\begin{equation} \label{eq:val ideal in m^2}
    \fa_{(m-1)a}\subseteq \fm_x^2,
\end{equation}
which becomes a local question. If $X$ is fixed, this is certainly true for $m\gg 0$, so the main point is to make sure that the bound on $m$ depend only on the given data $n,\varepsilon,A$ rather than on $X$. To this end, note that one way to interpret the opposite condition $\fa_{(m-1)a}\not\subseteq \fm_x^2$ is that there exists some $f\in \fm_x\setminus \fm_x^2$ such that $\ord_{V_0}(f)\ge (m-1)a$; in particular, $\lct(f):=\lct(X,\Delta;(f=0))\le \frac{1}{m-1}$. This suggests to us that we should try to analyze the Izumi type constant
\begin{equation} \label{eq:min of lct}
    \inf\{\lct(f)\mid f\in \fm_x\setminus \fm_x^2\}
\end{equation}
for the singularities in question.

Let $E$ be a Koll\'ar component over $x\in (X,\Delta)$ such that $A_{X,\Delta}(E)\le A$, whose existence is guaranteed by our assumption that $\mldk(x,X,\Delta)\le A$. Let $f\colon Z\to X$ be the plt blowup that extract $E$ and let $\Delta_E=\Diff_E(\Delta_Z)$ be the different. By Lemma \ref{lem:alpha gives Izumi const}, we have
\begin{equation} \label{eq:lct bound Izumi type}
    \lct(f)\ge \min\{1,\alpha(E,\Delta_E)\}\cdot \frac{A_{X,\Delta}(E)}{\ord_E(f)}.
\end{equation}
In order to give uniform estimate of \eqref{eq:min of lct} using \eqref{eq:lct bound Izumi type}, we need further information about the log discrepancy $A_{X,\Delta}(E)$, the $\alpha$-invariant $\alpha(E,\Delta_E)$, and the value of 
\[
d_E:=\sup\{\ord_E(f)\mid f\in \fm_x\setminus \fm_x^2\}
\]
(the last one can be thought of as measuring how well the valuation ideals of $\ord_E$ approximate the first two powers of the maximal ideal $\fm_x$).

% A key ingredient here is \cite{Z-mld^K-1}*{Lemma 3.4}, which provides explicit estimates of the Izumi type constants using some auxiliary divisor $E$ over $x\in (X,\Delta)$. Combined with the assumption $\hvol(x,X,\Delta)\ge \varepsilon$, it gives
% \begin{equation}
%     \lct(f)\ge C\cdot \frac{A_{X,\Delta}(E)}{\hvol_{X,\Delta}(E)\cdot\ord_E(f)}
% \end{equation}
% for some constant $C=C(n,\varepsilon)>0$. 

% We apply this estimate to a Koll\'ar component $E$ over $x\in (X,\Delta)$ that computes $\mldk$; in particular, $A_{X,\Delta}(E)\le A$. 

For the log discrepancy, recall from the previous discussion that $m_0(K_X+\Delta)$ is Cartier. Since $(X,\Delta)$ is klt, this implies that $A_{X,\Delta}(E)\ge \frac{1}{m_0}$. For the $\alpha$-invariant, we know by Lemma \ref{lem:Cartier index on plt blowup} that there exists some positive integer $N_1=N_1(n,\varepsilon,A,I)$ divisible by $m_0$ such that $N_1 E$ and $N_1(K_Z+\Delta_Z+E)$ are Cartier. By adjunction, we see that $N_1(K_E+\Delta_E)$ is also Cartier. By \cite[Corollary 1.8]{HMX-ACC}, the log Fano pair $(E,\Delta_E)$ belongs to a bounded family (which relies only on $N_1$). This implies that $\alpha(E,\Delta_E)$ is uniformly bounded below.
% It follows that 
% \[
% \hvol_{X,\Delta}(E)=A_{X,\Delta}(E)\cdot \vol(-K_E-\Delta_E)\le A\cdot \vol(-K_E-\Delta_E)
% \]
% is bounded from above by some constant that only depend on $A$ and $N_1$. 
Finally, to give an upper bound for $d_E$, let $L=-E|_E$ be the (ample) $\bQ$-divisor defined as in \cite[Definition A.4]{HLS-epsilon-plt-blowup} and let
\[
\fb_k:=\fa_k(\ord_E)= f_*\cO_Z(-kE)
\]
where $k=1,2,\dots$. Then $N_1 L$ is Cartier as $N_1 E$ is Cartier. Since $-(K_E+\Delta_E)\sim_\bQ A_{X,\Delta}(E)\cdot L$ and $A_{X,\Delta}(E)\ge \frac{1}{m_0}$, we see that the coefficient and degree of $L$ are bounded, hence the triple $(E,\Delta,L)$ belongs to a bounded family. Thus there exists an integer $N=N(n,\varepsilon,A,I)>0$ such that the section ring $\oplus_{k\in\bN} H^0(E,\lfloor kL \rfloor)$ is generated in degree $\le N$. We claim that
\begin{equation} \label{eq:b_(N+1) contained in m^2}
    \fb_{N+1}\subseteq \fm_x^2.
\end{equation}
Taking this for granted, it follows immediately that $d_E\le N$. Putting these information together we deduce from \eqref{eq:lct bound Izumi type} that \eqref{eq:min of lct} is bounded below by some positive constant that only rely on $n,\varepsilon,A,I$. From the discussion right above \eqref{eq:min of lct} we also know that \eqref{eq:min of lct} is bounded from above by $\frac{1}{m-1}$ where $m$ is the largest integer such that $\fa_{(m-1)a}\not\subseteq \fm_x^2$. Thus  we see that there is some fixed $m$ depending only on $n,\varepsilon,A,I$ such that $mH$ is Cartier  and \eqref{eq:val ideal in m^2} holds. Combined with \eqref{eq:separate jets from valuation ideal}, we deduce that
\begin{equation} \label{eq:separate tangent}
    H^0(\oX,\cO_{\oX}(mH))\to H^0(\oX,\cO_{\oX}/\fm_x^2)
\end{equation}
is surjective, i.e. $|mH|$ separates tangent direction at $x$. 

Before we proceed to show that $|mH|$ also defines a birational map, let us finish the proof of the claim \eqref{eq:b_(N+1) contained in m^2}. In fact, we shall prove by descending induction that $\fb_k\subseteq \fm_x^2$ for all $k\ge N+1$. This is clear when $k\gg 0$. Suppose that $\fb_{k+1}\subseteq \fm_x^2$ and $k\ge N+1$, then since $\fb_{k+1}\subseteq \fb_k$ for all $k$ and since $\oplus_{k\in\bN}\fb_k/\fb_{k+1}\cong \oplus_{k\in\bN} H^0(E,\lfloor kL \rfloor)$ (see \cite[Section 2.4]{LX-stability-kc} or \cite[Proposition 2.10]{LZ-Tian-sharp}) is generated in degree $\le N$, we see that 
\[
\fb_k/\fb_{k+1}\subseteq \sum_{i=1}^N (\fb_i/\fb_{i+1})\cdot (\fb_{k-i}/\fb_{k-i+1})\subseteq \fb_1^2/\fb_{k+1};
\]
in other words, $\fb_k\subseteq \fb_{k+1}+\fb_1^2$. As $\fb_{k+1}\subseteq \fm_x^2$ by induction hypothesis and clearly $\fb_1\subseteq \fm_x$, we obtain $\fb_k\subseteq \fm_x^2$ as desired.

We are finally in a position to show that $|mH|$ defines a birational map that restricts to an embedding on $X$. By \eqref{eq:separate tangent}, we already know that in a neighbourhood of $x$, the linear system $|mH|$ is base point free and the induced map is unramified. Since the base locus of $|mH|$ is closed and invariant under the $\bG_m$-action (coming from the orbifold cone structure), we see that $|mH|$ has no base point in $X$. Similarly, as the ramification locus of the induced map $\varphi$ on $X$ is a closed $\bG_m$-invariant subset, we see that $\varphi$ is unramified on $X$ and thus it is quasi-finite.  This implies that $\varphi(x')\neq \varphi(x)$ for all $x'\neq x\in X$, otherwise by $\bG_m$-translation we deduce that $\varphi$ contracts the closed orbit $\overline{\bG_m\cdot x'}$. In particular, $\varphi|_X^{-1}(\varphi(x))$ is supported at $x$; as $\varphi$ is also unramified, the scheme-theoretic pre-image $\varphi|_X^{-1}(\varphi(x))$ equals $\{x\}$. By upper semi-continuity and the $\bG_m$-action, this implies that $\varphi|_X^{-1}(\varphi(x'))$ has length $1$ and thus consists of a single point for all $x'\in X$. It follows that $\varphi|_X$ is an embedding on $X$ and we finish the proof.
\end{proof}

We have used the following vanishing result in the above proof. This should be well-known to experts, but we are unable to find a suitable reference.

\begin{lem} \label{lem:Nadel vanishing for Weil div}
Let $(X,\Delta)$ be a pair such that $K_X+\Delta$ is $\bQ$-Cartier. Let $L$ be a $\bQ$-Cartier Weil divisor such that $L-(K_X+\Delta)$ is nef and big. Assume that $(X,\Delta)$ is klt along the non-Cartier locus of $L$. Then
\[
H^i(X,\cJ(X,\Delta)\otimes \cO_X(L))=0
\]
for all $i>0$.
\end{lem}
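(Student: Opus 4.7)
The plan is to reduce the statement to the standard Kawamata--Viehweg vanishing on a log resolution. First, I would take a log resolution $\pi\colon Y\to X$ of $(X,\Delta+L)$, so that the union $\Ex(\pi)\cup \pi^{-1}_*(\Delta+L)$ has simple normal crossing support, and introduce the $\bQ$-divisor
\[
N := K_Y - \pi^*(K_X+\Delta) + \pi^*L
\]
on $Y$, which is well defined since $K_X+\Delta$ is $\bR$-Cartier and $L$ is $\bQ$-Cartier. A direct computation yields
\[
\lceil N \rceil - K_Y = \pi^*\bigl(L-(K_X+\Delta)\bigr) + \{-N\},
\]
where $\{-N\}$ is the (effective) fractional part, supported on an snc divisor by construction.

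Next, I would apply Kawamata--Viehweg vanishing in both its absolute and relative forms to the Weil divisor $\lceil N\rceil$. Since $L-(K_X+\Delta)$ is nef and big on $X$, its pullback $\pi^*(L-(K_X+\Delta))$ is nef and big on $Y$; as the pullback of an $\bR$-Cartier divisor under a birational map, it is moreover $\pi$-numerically trivial, hence $\pi$-nef (and $\pi$-big, trivially, since $\pi$ is birational). The snc form of Kawamata--Viehweg vanishing therefore gives both
\[
H^i\bigl(Y,\cO_Y(\lceil N\rceil)\bigr)=0 \quad \mathrm{and} \quad R^i\pi_*\cO_Y(\lceil N\rceil)=0
\]
for all $i>0$. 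The Leray spectral sequence then yields $H^i\bigl(X,\pi_*\cO_Y(\lceil N\rceil)\bigr)=0$ for all $i>0$, so it suffices to identify $\pi_*\cO_Y(\lceil N\rceil)$ with $\cJ(X,\Delta)\otimes \cO_X(L)$.

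This identification is the main obstacle, and is precisely where the hypothesis that $(X,\Delta)$ is klt along the non-Cartier locus of $L$ is used. Let $U\subseteq X$ denote the open subscheme on which $L$ is Cartier, so that by normality $X\setminus U$ has codimension at least two. On $U$, the projection formula applied to the line bundle $\cO_X(L)|_U$, together with the definition $\cJ(X,\Delta)=\pi_*\cO_Y(\lceil K_Y-\pi^*(K_X+\Delta)\rceil)$, identifies the two sheaves directly. On a neighbourhood of $X\setminus U$, the klt assumption gives $\cJ(X,\Delta)=\cO_X$, so the target sheaf simplifies to $\cO_X(L)$; a careful but routine computation of the rounding then shows that $\lceil N\rceil$ differs from $\pi^{-1}_*L$ only by an effective $\pi$-exceptional divisor there (the log discrepancies of all prime divisors on $Y$ lying over $X\setminus U$ being strictly positive by the klt hypothesis), whence $\pi_*\cO_Y(\lceil N\rceil)=\cO_X(L)$ locally as well. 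Both sheaves are reflexive (the pushforward from $Y$ smooth to $X$ normal of a divisorial sheaf, and the reflexive tensor product of two rank-one reflexives, respectively), so agreement in codimension one forces agreement globally, completing the proof.
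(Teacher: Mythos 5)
You follow essentially the same route as the paper's own proof: take a log resolution $\pi\colon Y\to X$ of $(X,\Delta+L)$, form $N = K_Y - \pi^*(K_X+\Delta) + \pi^*L$ (so that your $\lceil N\rceil$ and $\{-N\}$ are, in the paper's notation, exactly $L'$ and $D$), apply Kawamata--Viehweg vanishing on $Y$ and relatively for $\pi$, pass through the Leray spectral sequence, and then identify $\pi_*\cO_Y(\lceil N\rceil)$ with $\cJ(X,\Delta)\otimes\cO_X(L)$ locally by splitting $X$ into the Cartier locus $U$ of $L$ and an open klt neighbourhood of $X\setminus U$. The projection-formula argument on $U$ is correct and matches the paper's.

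The one imprecise step is in the klt case: you claim that $\lceil N\rceil - \pi^{-1}_*L$ is effective and $\pi$-exceptional, and justify the effectivity solely by the positivity of log discrepancies. That justification is incomplete and the claim can in fact fail. If $E_i$ is $\pi$-exceptional with discrepancy $a_i$ and we set $c_i := \ord_{E_i}(\pi^*L)-\ord_{E_i}(\pi^{-1}_*L)$, then the coefficient of $E_i$ in $\lceil N\rceil - \pi^{-1}_*L$ works out to $\lceil a_i+c_i\rceil$; since $L$ need not be effective, $c_i$ can be negative enough that $a_i+c_i\le -1$ even with $a_i>-1$. The correct comparison divisor, as in the paper, is $\lfloor\pi^*L\rfloor$ rather than $\pi^{-1}_*L$: writing $b_i=\{c_i\}\in[0,1)$, on the klt locus one has $\lceil N\rceil - \lfloor\pi^*L\rfloor = \sum_i\lceil a_i+b_i\rceil E_i\ge 0$, and $\pi_*\cO_Y(\lfloor\pi^*L\rfloor+F)=\cO_X(L)$ for any effective $\pi$-exceptional $F$, which gives the wanted identification $\pi_*\cO_Y(\lceil N\rceil)=\cO_X(L)$. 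Finally, the closing reflexivity argument is superfluous: $U$ together with a klt open neighbourhood of $X\setminus U$ already forms an open cover of $X$ on which you have verified the identification directly, so there is nothing left to propagate across a codimension-two locus.
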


\begin{proof}
If $L$ is a line bundle this is just the usual Nadel vanishing; in the general case we follow the proof of Nadel vanishing. Let $f\colon Y\to X$ be a log resolution. We may write
\begin{align*}
    K_Y+\Delta_Y &=f^*(K_X+\Delta)+\sum a_i E_i ,\\
    f^*L &= \lfloor f^*L \rfloor + \sum b_i E_i ,
\end{align*}
where the $E_i$'s are the exceptional divisors, and $0\le b_i<1$. Let
\begin{align*}
    L' &= \lfloor f^*L \rfloor -\lfloor \Delta_Y \rfloor + \sum \lceil a_i+b_i \rceil E_i ,\\
    D &= \{\Delta_Y\}+\sum\{-a_i-b_i\}E_i.
\end{align*}
Then it's easy to check that $(Y,D)$ is klt (i.e. $\{D\}=0$ since $Y$ is a log resolution) and $L'-(K_Y+D)\sim_\bQ f^*(L-K_X-\Delta)$ is nef and big. By Kawamata-Viehweg vanishing we have $H^i(Y,\cO_Y(L'))=0$ and $R^if_*\cO_Y(L')=0$ for all $i>0$. It follows that $Rf_*\cO_Y(L')=f_*\cO_Y(L')$ and hence
\[
H^i(X,f_*\cO_Y(L'))=H^i(X,Rf_*\cO_Y(L'))=H^i(Y,\cO_Y(L'))=0.
\]
It remains to show that
\begin{equation} \label{eq:pushforward = multiplier ideal}
    f_*\cO_Y(L')=\cJ(X,\Delta)\otimes \cO_X(L).
\end{equation}
This is a local question, so it suffices to check the equality at any $x\in X$. If $(X,\Delta)$ is klt at $x$, then locally $\cJ(X,\Delta)=\cO_{X}$, $\lfloor \Delta_Y \rfloor = 0$, and $a_i>-1$ which implies $\lceil a_i+b_i \rceil\ge 0$. It follows that
\[
f_*\cO_Y(L')=\cO_X(L)=\cJ(X,\Delta)\otimes \cO_X(L).
\]
If $(X,\Delta)$ is not klt at $x$, then $L$ is Cartier around $x$, which gives $b_i=0$ for every $E_i$ whose image contains $x$, thus locally
\[
f_*\cO_Y(L')=f_*\cO_Y(f^*L+\lceil -\Delta_Y+\sum a_i E_i\rceil)=\cJ(X,\Delta)\otimes \cO_X(L)
\]
by projection formula and the definition of the multiplier ideal. Thus we see that \eqref{eq:pushforward = multiplier ideal} always holds. This completes the proof.
\end{proof}

\subsection{Proof of Theorem \ref{thm:polarized cone bdd}}

We are now in a position to prove Theorem \ref{thm:polarized cone bdd}. 

\begin{proof}
First assume that $I\subseteq [0,1]\cap \bQ$. Let $x\in (X,\Delta;\xi)$ be a polarized log Fano cone singularity in $\cS$ and let $\bT$ be the torus generated by $\xi$. By Lemma \ref{lem:vol continuous in Reeb cone}, there exists some quasi-regular polarization $\xi'$ that is sufficiently close to $\xi$ in the Reeb cone such that $\Theta(X,\Delta;\xi')\ge \frac{\theta}{2}$. Using $\xi'$, we may realize $(X,\Delta)$ as an orbifold cone over some polarized pair $(V,\Delta_V;L)$. Moreover, if $V_0$ is the exceptional divisor of the orbifold vertex blowup as in Section \ref{ss:orbifold}, then $\ord_{V_0}$ is proportional to $\wt_{\xi'}$. By assumption and Lemma \ref{lem:vol<=n^n}, we obtain 
\begin{equation} \label{eq:Sasaki vol upper bdd}
    \hvol_{X,\Delta}(\ord_{V_0})=\hvol_{X,\Delta}(\wt_{\xi'})\le 2\theta^{-1}\hvol(x,X,\Delta)\le 2\theta^{-1}n^n.
\end{equation}

Let $(\oX,\oDelta)$ be the projective orbifold cone over $(V,\Delta_V;L)$ and $V_\infty$ the divisor at infinity. By Proposition \ref{prop:bir bdd proj cone}, there exists some integer $m$ depending only on $n,\varepsilon,A,I$ such that $|-m(K_{\oX}+\oDelta+V_\infty)|$ defines a birational map $\varphi$ that restricts to an embedding on $X$. By \eqref{eq:Sasaki vol upper bdd} and Lemma \ref{lem:Sasaki vol as global vol}, we see that
\[
\vol(-m(K_{\oX}+\oDelta+V_\infty))=m^n\cdot \hvol_{X,\Delta}(\ord_{V_0}) \le 2\theta^{-1}(mn)^n
\]
is bounded above.

Let us show that $\vol(-m(K_{\oX}+\oDelta+V_\infty)|_{\oDelta})$ is also bounded from above, so that the pair $(\oX,\oDelta)$ belongs to a birationally bounded family. By Lemma \ref{lem:klt type threshold}, we know that $x\in (X,(1+c)\Delta)$ is of klt type for some constant $c=c(n,\varepsilon)>0$. By Lemma \ref{lem:cone klt type}, this implies that $-(K_V+(1+c)\Delta_V+\Delta_L)$ is big. By a similar calculation as in Lemma \ref{lem:Sasaki vol as global vol}, it follows that
\begin{align*}
    \vol(-(K_{\oX}+\oDelta+V_{\infty})|_{\oDelta}) & = \left( -(K_{\oX}+\oDelta+V_{\infty})\right)^{n-2}\cdot rV_\infty\cdot \oDelta \\
    % & = r\cdot \left( -(K_{\oX}+\oDelta+V_{\infty})|_{V_\infty}\right)^{n-2}\cdot \oDelta|_{V_\infty} \\
    & = r\cdot \left( -(K_V+\Delta_V+\Delta_L) \right)^{n-2}\cdot \Delta_V \\
    & \le c^{-1}r\cdot \left( -(K_V+\Delta_V+\Delta_L) \right)^{n-1} \\
    & = c^{-1}\hvol_{X,\Delta}(\ord_{V_0}) \le 2(c\theta)^{-1}n^n,
\end{align*}
where the first equality is by Lemma \ref{lem:orbifold cone property}, the second by adjunction along $V_\infty\cong V$, the next inequality by the bigness of $-(K_V+(1+c)\Delta_V+\Delta_L)$, and the last equality by Lemma \ref{lem:Sasaki vol as global vol}. Thus $\vol(-m(K_{\oX}+\oDelta+V_\infty)|_{\oDelta})$ is also bounded from above as desired. Note that $\varphi|_{\Supp(\oDelta)}$ is also birational since $\varphi$ is an embedding at $x$. Therefore, the image $(W,\Delta_W)$ of $(\oX,\oDelta)$ under the birational map induced by $|-m(K_{\oX}+\oDelta+V_\infty)|$ belongs to a fixed bounded family $(\cW,\ocD)\to B$.

By construction, $(\oX,\oDelta)$ carries an effective $\bT$-action and $V_\infty$ is $\bT$-invariant. It follows that $|-m(K_{\oX}+\oDelta+V_\infty)|$ is a $\bT$-invariant linear system and the induced birational map $\varphi$ is $\bT$-equivariant. In particular, the image $(W,\Delta_W)$ also carries an effective $\bT$-action. We claim that there exists finitely many morphisms $B_i\to B$ (depending only on the family $(\cW,\ocD)\to B$) such that
\begin{enumerate}
    \item after base change, each family $(\cW_i,\ocD_i)=(\cW,\ocD)\times_B B_i\to B_i$ admits an effective fiberwise $\bT_i$-action for some torus $\bT_i$, 
    \item $(W,\Delta_W)\cong (\cW_{b_i},\ocD_{b_i})$ for some $b_i\in B_i$, and under this isomorphism, the $\bT$-action on $W$ is induced by some group homomorphism $\bT\to \bT_i$ (in other words, the $\bT$-action on $W$ is induced by the $\bT_i$-action on $\cW_{b_i}$).
\end{enumerate}

To see this, first observe that for any torus action on a projective variety, the induced action on the Picard scheme is trivial. This is because both $\Pic^0$ (an abelian variety) and $\Pic/\Pic^0$ (a discrete group) have no $\bG_m$-action. Thus if $\cL$ is a relatively ample line bundle on $\cW$, then it is invariant under any fiberwise torus action. Consider the relative automorphism group scheme $\bG$ over $B$, which parametrizes the automorphisms of the polarized fibers $(\cW_b,\ocD_b;\cL_b)$. Note that $\bG$ is affine over $B$. Possibly after stratifying the base $B$, we may also assume that $\bG$ is smooth over $B$. By \cite[Expos\'e XII, Th\'eor\`em 1.7(a)]{SGA3-II}, after a further stratification of $B$, we may assume that the dimension of the maximal torus of $\bG_b$ is locally constant in $b\in B$. We may discard the components of $B$ where this torus dimension is zero. By \cite[Expos\'e XII, Th\'eor\`em 1.7(b)]{SGA3-II}, it then follows that there exists an (\'etale) cover $\cup B_i\to B$ of the remaining components of $B$ and a subgroup scheme $\bG_i\subseteq\bG\times_B B_i$ that reduces to the maximal tori on the fibers. Passing to a further finite cover of the $B_i$'s, we may assume that the $\bG_i$'s are split, i.e. $\bG_i=\bT_i\times B_i$ for some torus $\bT_i$. In particular, the $\bG$-action on $(\cW,\ocD)$ induces a fiberwise $\bT_i$-action on $(\cW_i,\ocD_i):=(\cW,\ocD)\times_B B_i$. This gives the family in (1). Since $(W,\Delta_W)$ has a non-trivial torus action, we see that $(W,\Delta_W)$ appears as a fiber of $(\cW_i,\ocD_i)\to B_i$ for some $i$. Since all maximal tori in $\Aut(W,\Delta_W)$ are conjugate to each other, we see that $\bT$ is conjugate to a subtorus of the maximal torus $\bT_i$. In other words, there exists an isomorphism $(W,\Delta_W)\cong (\cW_{b_i},\ocD_{b_i})$ such that (2) holds. This proves the claim.

Taking the fiberwise isolated $\bT_i$-fixed points $(\cW_i,\ocD_i)\to B_i$ for all $i$, we get families $B_i\subseteq (\cX_i,\cD_i)\to B_i$ of singularities (possibly after a refinement of the $B_i$'s), each with an effective fiberwise torus $\bT_i$-action. Since $\varphi$ restricts to an embedding on $X$, by the second part of the above claim we see that $x\in (X,\Delta;\xi)$ is isomorphic to $b_i\in (\cW_{b_i},\cD_{b_i};\xi_{b_i})$ for some $b_i\in B_i$ and some $\xi_{b_i}\in N(\bT_i)_\bR$. In particular, this gives log boundedness. Note that when $I\not\subseteq \bQ$, we can still apply the above argument to the singularities and the coefficient set constructed from Lemma \ref{lem:R to Q}, thus the same conclusion holds.

To get boundedness, we need to further stratify the family $(\cW_i,\ocD_i)\to B_i$ so that it becomes $\bR$-Gorenstein klt. By \cite[Lemma 4.44]{Kol-moduli-book} and inversion of adjunction, there exists a finite collection of locally closed subset $S_j$ of $\cup_i B_i$ such that the family becomes $\bR$-Gorenstein after base change to $\cup_j S_j$ and enumerates exactly all the klt fibers of $\cup_i (\cW_i,\ocD_i)\to \cup_i B_i$ (note that \cite[Lemma 4.44]{Kol-moduli-book} requires the family to be proper but the proof applies to our situation, essentially because the section $B_i\subseteq \cX_i$ is proper over the base). Replacing the $B_i$'s by the $S_j$'s, we obtain the desired family. The proof is now complete.
\end{proof}

\subsection{Applications}

We now explain how to deduce the other main results of this paper from Theorem \ref{thm:polarized cone bdd}. First we prove the boundedness criterion for K-semistable log Fano cone singularities.

\begin{cor} \label{cor:K-ss cone bdd}
Let $\cS$ be a set of $n$-dimensional K-semistable log Fano cone singularities with coefficients in a fixed finite set $I\subseteq [0,1]$. Then $\cS$ is bounded if and only if there exist some $\varepsilon,A>0$ such that
\[
\hvol(x,X,\Delta)\ge \varepsilon \quad \mathit{and} \quad \mldk(x,X,\Delta)\le A
\]
for all $x\in (X,\Delta)$ in $\cS$.
\end{cor}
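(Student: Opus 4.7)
The plan is to extract this corollary almost directly from Theorem \ref{thm:polarized cone bdd} (for the non-trivial implication) and from Lemma \ref{lem:vol and mld^K bdd in family} (for the easy one). The key observation that makes the argument essentially formal is that K-semistability of $(X,\Delta;\xi)$ is by definition the same as $\Theta(X,\Delta;\xi)=1$ (see the remark right after Definition \ref{def:vol ratio}); thus the volume ratio hypothesis of Theorem \ref{thm:polarized cone bdd} becomes free of charge with $\theta=1$, and therefore disappears from the present statement.

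For the forward implication, I would proceed as follows. Assuming that $\cS$ is bounded, Definition \ref{defn:cone bdd} provides finitely many $\bR$-Gorenstein families $B_i\subseteq(\cX_i,\cD_i)\to B_i$ of klt singularities whose fibers cover the underlying klt singularities of $\cS$. Applying Lemma \ref{lem:vol and mld^K bdd in family} to each $B_i$ produces constants $\varepsilon_i,A_i>0$ with $\hvol(b,\cX_{i,b},\cD_{i,b})\ge\varepsilon_i$ and $\mldk(b,\cX_{i,b},\cD_{i,b})\le A_i$ for every closed point $b\in B_i$. Taking $\varepsilon=\min_i\varepsilon_i$ and $A=\max_i A_i$ yields the desired uniform estimates, and no difficulty is anticipated.

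For the reverse implication, I would first invoke the uniqueness (up to rescaling) of the normalized volume minimizer from \cite{XZ-minimizer-unique}, which assigns to each K-semistable log Fano cone singularity $x\in(X,\Delta)$ in $\cS$ a canonical K-semistable polarization $\xi$. Endowing each element of $\cS$ with such a $\xi$ produces a set $\cS'$ of polarized log Fano cone singularities with $\Theta(X,\Delta;\xi)=1$, while the local volume and $\mldk$ bounds are inherited from the hypothesis. All assumptions of Theorem \ref{thm:polarized cone bdd} are then satisfied (with $\theta=1$), and that theorem produces the boundedness of $\cS'$. Finally, the last sentence of Definition \ref{defn:cone bdd} transfers boundedness from $\cS'$ back to $\cS$, completing the argument. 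No step in this direction is expected to be a serious obstacle, as all the technical content has been absorbed into Theorem \ref{thm:polarized cone bdd}.
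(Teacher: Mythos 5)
Your proposal is correct and follows exactly the paper's own proof: the forward implication is Lemma \ref{lem:vol and mld^K bdd in family}, and the reverse implication is Theorem \ref{thm:polarized cone bdd} applied with $\theta=1$, using the remark after Definition \ref{def:vol ratio} that K-semistability is equivalent to $\Theta(X,\Delta;\xi)=1$ for the canonical polarization. The extra bookkeeping you supply (taking $\min_i\varepsilon_i$, $\max_i A_i$ across families, and transferring from polarized to unpolarized boundedness via the last clause of Definition \ref{defn:cone bdd}) is implicit in the paper's one-line argument and does not change the route.
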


\begin{proof}
One direction follows from Lemma \ref{lem:vol and mld^K bdd in family}, while the other direction is implied by Theorem \ref{thm:polarized cone bdd} as the volume ratio of a K-semistable log Fano cone singularity (with the K-semistable polarization) is equal to $1$.
\end{proof}

Similarly, we have an unpolarized version of Theorem \ref{thm:polarized cone bdd}.

\begin{cor} \label{cor:cone bdd unpolarized version}
Let $\cS$ be a set of $n$-dimensional log Fano cone singularities with coefficients in a fixed finite set $I\subseteq [0,1]$. Then $\cS$ is bounded if and only if there exist positive constants $\varepsilon,\theta,A>0$ such that 
\[
\hvol(x,X,\Delta)\ge \varepsilon,\quad 
\Theta(x,X,\Delta)\ge \theta,\quad\mathrm{and}\quad
\mldk(x,X,\Delta)\le A.
\]
for all $x\in (X,\Delta)$ in $\cS$.
\end{cor}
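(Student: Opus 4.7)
The plan is to deduce this from Theorem \ref{thm:polarized cone bdd} together with the preparatory lemmas in Section \ref{ss:bdd defn}. The two directions require different ingredients, so I will handle them separately.

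For the ``only if'' direction, assume $\cS$ is bounded. Then by Lemma \ref{lem:vol and mld^K bdd in family}, any $\bR$-Gorenstein family of klt singularities realizing $\cS$ gives uniform constants $\varepsilon, A > 0$ for the local volume lower bound and the $\mldk$ upper bound. For the volume ratio bound, I would invoke Lemma \ref{lem:density bdd in family}, which directly yields a uniform $\theta > 0$ with $\Theta(x,X,\Delta) \ge \theta$ for all members of $\cS$. Together these give the three required inequalities.

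For the ``if'' direction, the key move is to upgrade each unpolarized singularity in $\cS$ to a polarized one that satisfies the hypotheses of Theorem \ref{thm:polarized cone bdd}. Given $x \in (X,\Delta)$ in $\cS$, I would use Lemma \ref{lem:vol continuous in Reeb cone}, which tells us that $\xi \mapsto \hvol_{X,\Delta}(\wt_\xi)$ is continuous on the Reeb cone and attains a minimum. Since $\hvol(x,X,\Delta)$ is a fixed number, the supremum defining $\Theta(x,X,\Delta)$ is attained at some polarization $\xi^*$, so $\Theta(X,\Delta;\xi^*) = \Theta(x,X,\Delta) \ge \theta$. Collecting these polarizations, I obtain a set $\cS^*$ of polarized log Fano cone singularities for which all three inequalities required in Theorem \ref{thm:polarized cone bdd} hold with the same constants $n, I, \varepsilon, \theta, A$. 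Theorem \ref{thm:polarized cone bdd} then shows $\cS^*$ is bounded as a set of polarized log Fano cone singularities, and by Definition \ref{defn:cone bdd} the underlying set $\cS$ of log Fano cone singularities is bounded.

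There is no real obstacle here beyond correctly applying the definitions; the only point that requires care is ensuring the supremum in the definition of $\Theta(x,X,\Delta)$ is realized by an honest polarization so that Theorem \ref{thm:polarized cone bdd} applies, which is exactly what Lemma \ref{lem:vol continuous in Reeb cone} provides. Combining the two directions gives the statement.
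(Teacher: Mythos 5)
Your proof is correct and follows essentially the same route as the paper: the ``only if'' direction is exactly Lemmas \ref{lem:vol and mld^K bdd in family} and \ref{lem:density bdd in family}, and the ``if'' direction is an application of Theorem \ref{thm:polarized cone bdd}. The only extra content you add is the observation that, by Lemma \ref{lem:vol continuous in Reeb cone}, the supremum defining $\Theta(x,X,\Delta)$ is attained; this is fine, though strictly speaking it is not needed — one could just as well pick any polarization $\xi$ with $\Theta(X,\Delta;\xi)\ge\theta/2$, which exists by definition of the supremum, and apply Theorem \ref{thm:polarized cone bdd} with the constant $\theta/2$.
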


\begin{proof}
One direction follows from Lemmas \ref{lem:vol and mld^K bdd in family} and \ref{lem:density bdd in family}, while the other direction is immediate by Theorem \ref{thm:polarized cone bdd}.
\end{proof}

We next prove a version of Theorem \ref{thm:polarized cone bdd} that replaces the volume ratios with the stability thresholds introduced in \cite{Hua-thesis}. First we recall the definition. Let $x\in (X=\Spec(R),\Delta;\xi)$ be a polarized log Fano cone singularity and let $\bT$ be the torus generated by $\xi$. For this definition it would be more convenient to rescale the polarization so that $A_{X,\Delta}(\wt_\xi)=1$, which we will assume in what follows. Using the weight decomposition $R=\oplus_\alpha R_\alpha$, we set
\[
R_m:=\bigoplus_{\alpha,\,m-1<\langle \alpha,\xi\rangle\le m} R_\alpha.
\]
An $m$-basis type $\bQ$-divisor of $x\in (X=\Spec(R),\Delta;\xi)$ is defined to be a $\bQ$-divisor of the form
\[
D = \frac{1}{mN_m} \sum_{i=1}^{N_m} \{s_i=0\}
\]
where $N_m=\dim R_m$ and $s_1,\dots,s_{N_m}$ form a basis of $R_m$. Set $\delta_m=\inf_D \lct_x(X,\Delta;D)$ where the infimum runs over all $m$-basis type $\bQ$-divisors $D$. The stability threshold of $x\in (X=\Spec(R),\Delta;\xi)$ is defined as
\[
\delta(X,\Delta;\xi):=\lim_{m\to\infty} \delta_m.
\]
If $(X,\Delta;\xi)$ is the cone over a log Fano pair $(V,\Delta_V)$, then this definition is closely related to the stability threshold of $(V,\Delta_V)$ introduced in \cite{FO-delta} (see also \cite{BJ-delta}). In fact, using inversion of adjunction it is not hard to show that $\delta(X,\Delta;\xi)=\min\{1,\delta(V,\Delta_V)\}$, \emph{cf.} \cite[Theorem 3.6]{XZ-minimizer-unique}.

The stability threshold version of Theorem \ref{thm:polarized cone bdd} is a direct consequence of the following result.

\begin{lem} \label{lem:density>=delta^n}
Let $x\in (X=\Spec(R),\Delta;\xi)$ be a polarized log Fano cone singularity of dimension $n$. Then
\[
\Theta(X,\Delta;\xi)\ge \delta(X,\Delta;\xi)^n.
\]
\end{lem}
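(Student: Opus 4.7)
The plan is to prove the pointwise inequality $\hvol_{X,\Delta}(v) \geq \delta^n \cdot \hvol_{X,\Delta}(\wt_\xi)$ for every $v \in \Val^*_{X,x}$; taking the infimum over $v$ then yields $\Theta(X,\Delta;\xi) = \hvol(x,X,\Delta)/\hvol_{X,\Delta}(\wt_\xi) \geq \delta^n$. Under the normalization $A_{X,\Delta}(\wt_\xi)=1$, this pointwise inequality reads $A_{X,\Delta}(v)^n\vol(v) \geq \delta^n \vol(\wt_\xi)$. I would prove it by combining a valuative bound $A_{X,\Delta}(v) \geq \delta \cdot S(v)$ with a volume comparison $S(v)^n\vol(v) \geq \vol(\wt_\xi)$, where $S(v) := \lim_m S_m(v)$ and $S_m(v) := \sup_D v(D)$, the supremum running over $m$-basis type $\bQ$-divisors $D$.

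The valuative bound is essentially immediate from the basis-divisor setup. By the definition of $\delta_m$ we have $\lct_x(X,\Delta;D) \geq \delta_m$ for every $m$-basis type $D$, hence $A_{X,\Delta}(v) \geq \delta_m \cdot v(D)$. Choosing a basis of $R_m$ adapted to the $v$-filtration, $v(D) = \frac{1}{mN_m}\sum_i v(s_i) = \frac{1}{mN_m}\int_0^\infty \dim(R_m \cap \fa_t(v))\, dt = S_m(v)$, and passing to $m\to\infty$ gives $A_{X,\Delta}(v) \geq \delta \cdot S(v)$.

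The technical core is the volume comparison $S(v)^n\vol(v) \geq \vol(\wt_\xi)$. By the uniqueness of the normalized-volume minimizer \cite{XZ-minimizer-unique}, the infimum defining $\hvol(x,X,\Delta)$ is attained on a $\bT$-invariant valuation, so it suffices to treat $\bT$-invariant $v$. For such $v$, the filtration $\fa_\bullet(v)$ respects the weight decomposition, and summing the identity for $S_m(v)$ over $m\leq M$ gives
\[
\sum_{m=1}^{M} m N_m S_m(v) \;=\; \int_0^\infty \dim\bigl(R_{\leq M} \cap \fa_t(v)\bigr)\, dt,
\]
where $R_{\leq M} := \bigoplus_{m \leq M} R_m$. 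As $M\to\infty$, using $N_m \sim \vol(\wt_\xi)m^{n-1}/(n-1)!$, the left side is asymptotic to $S(v) \cdot \vol(\wt_\xi) \cdot \frac{M^{n+1}}{(n+1)(n-1)!}$. For the right side, the injection $R_{\leq M}/(R_{\leq M}\cap \fa_t(v)) \hookrightarrow R/\fa_t(v)$ and the asymptotic $\dim R/\fa_t(v) \sim \vol(v)t^n/n!$ yield $\dim(R_{\leq M}\cap \fa_t(v)) \geq \dim R_{\leq M} - \dim R/\fa_t(v)$; integrating this bound over $[0, T_M]$, with $T_M \sim M \cdot (\vol(\wt_\xi)/\vol(v))^{1/n}$ the point where the bound vanishes, gives a lower bound of order $\vol(\wt_\xi)^{(n+1)/n}\vol(v)^{-1/n} \cdot \frac{M^{n+1}}{(n+1)(n-1)!}$. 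Matching the two asymptotics cancels the common factor and produces $S(v) \cdot \vol(v)^{1/n} \geq \vol(\wt_\xi)^{1/n}$, the desired comparison.

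Combining the two steps gives $\hvol_{X,\Delta}(v) = A_{X,\Delta}(v)^n\vol(v) \geq \delta^n S(v)^n\vol(v) \geq \delta^n \vol(\wt_\xi) = \delta^n \hvol_{X,\Delta}(\wt_\xi)$, finishing the proof. The main obstacle is the volume comparison: while the asymptotic bookkeeping is routine once the setup is fixed, justifying the reduction to $\bT$-invariant $v$ and rigorously ensuring convergence of $S_m(v)$ (rather than only $\liminf$/$\limsup$) may require invoking the Newton--Okounkov body machinery with a flag adapted to the $\bT$-grading, after which the inequality becomes an instance of H\"older's inequality applied to the concave transform of $v$.
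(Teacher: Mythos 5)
Your proof is correct and takes essentially the same route as the paper's: the valuative inequality $A_{X,\Delta}(v)\ge \delta\cdot S(v)$, the volume comparison $S(v)^n\vol(v)\ge \vol(\wt_\xi)$ for $\bT$-invariant $v$, and the reduction to $\bT$-invariant $v$ via the minimizer being $\bT$-invariant. The only difference is that where the paper cites these two inequalities from Huang's thesis and from the proof of \cite[Theorem 3.7]{XZ-minimizer-unique} (after identifying $S(v)$ with $S(\wt_\xi;v)$), you reproduce the underlying adapted-basis and Okounkov-body/asymptotic arguments directly, and you correctly flag the convergence of $S_m(v)$ as the remaining technical point, which is exactly what those references supply.
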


\begin{proof}
Let $\delta:=\delta(X,\Delta;\xi)$. We first recall the valuative interpretation of $\delta(X,\Delta;\xi)$ as explained in \cite{Hua-thesis}. Fix some $\bT$-invariant valuation $v\in \Val_{X,x}^*$. Set $S_m(v):=\sup_D v(D)$ where $D$ varies among $m$-basis type $\bQ$-divisors. By an Okounkov body argument \cite[Section 4]{Hua-thesis}, we know that $S(v):=\lim_{m\to\infty} S_m(v)$ exists and
\begin{equation} \label{eq:A>=delta*S}
    A_{X,\Delta}(v)\ge \delta\cdot S(v)
\end{equation}
by \cite[Theorem 4.3.5]{Hua-thesis}. We next relate $S(v)$ to the ``relative'' $S$-invariant $S(\wt_\xi;v)$ defined in \cite[Section 3.1]{XZ-minimizer-unique}. In our notation (and under our assumption that $A_{X,\Delta}(\wt_\xi)=1)$, we have $S(\wt_\xi;v)=\lim_{m\to\infty} S_m(\wt_\xi;v)$ where
\[
S_m(\wt_\xi;v) := \frac{\sum_{k=0}^m kN_k S_k(v)}{\sum_{k=0}^m kN_k}.
\]
Intuitively, the previous $S_m(v)$ is defined using basis type divisors for $R_m$ while $S_m(\wt_\xi;v)$ is defined via basis type divisors for $\oplus_{k\le m} R_k$ (the main reason for doing so in \cite{XZ-minimizer-unique} is that in the more general situation, the dimension of the analogous space $\oplus_{k\le m} R_k$ has an asymptotic expression, while the individual $R_m$ does not). Note that our $S_m(\wt_\xi;v)$ differs slightly from the one in \cite[Section 3.1]{XZ-minimizer-unique} by some round-downs, but after taking the $m\to\infty$ limit we get the same value of $S(\wt_\xi;v)$. Since $S_m(v)\to S(v)$ as $m\to\infty$, from the above expression we get $S(v)=S(\wt_\xi;v)$. From the proof of \cite[Theorem 3.7]{XZ-minimizer-unique} (especially the inequality after (3.8) in \emph{loc. cit.}), we then obtain
\[
S(v)\ge \left(\frac{\vol(\wt_\xi)}{\vol(v)}\right)^{\frac{1}{n}}.
\]
Combined with \eqref{eq:A>=delta*S} and recall that $A_{X,\Delta}(\wt_\xi)=1$, we deduce
\[
\hvol_{X,\Delta}(v) = A_{X,\Delta}(v)^n\cdot \vol(v)\ge \delta^n S(v)^n \cdot \vol(v) \ge \delta^n \vol(\wt_\xi)=\delta^n \hvol_{X,\Delta}(\wt_\xi)
\]
for all $\bT$-invariant valuation $v\in \Val_{X,x}^*$. Since the local volume of $x\in (X,\Delta)$ is computed by some $\bT$-invariant valuation (\cite{Blu-minimizer-exist} and \cite[Corollary 1.2]{XZ-minimizer-unique}), this implies that $\hvol(x,X,\Delta)\ge \delta^n \hvol_{X,\Delta}(\wt_\xi)$ and hence $\Theta(X,\Delta;\xi)\ge \delta^n$. 
\end{proof}

\begin{rem}
We also have an inequality in the reverse direction, namely, there exists some positive constant $c>0$ that only depends on the dimension such that
\begin{equation} \label{eq:delta>=Theta}
    c\cdot \delta(X,\Delta;\xi)\ge \Theta(X,\Delta;\xi).
\end{equation}
To see this, let $c=c_0^{-1}$ where $c_0$ is the constant from \cite[Lemma 3.4]{Z-mld^K-1} and let $D$ be any $m$-basis type $\bQ$-divisor of $(X,\Delta;\xi)$. Then by definition we have $\wt_\xi (D)\le 1 = A_{X,\Delta}(\wt_\xi)$, hence by \emph{loc. cit.} we see that 
\[\lct_x (X,\Delta;D)\ge c_0 \cdot \Theta(X,\Delta;\xi).\]
which gives \eqref{eq:delta>=Theta}. Note that the upper bound on the volume ratio is at least linear in the stability threshold, as can be seen on the smooth toric singularities $(X,\Delta)=(\bA^n,0)$: if $\xi=(a_1,\dots,a_n)\in\ft_\bR^+ = \bR^n_{>0}$ and $a_1\ll a_2= \dots= a_n$, then we have that $\Theta(X,\Delta;\xi) = \frac{n^n a_1\cdots a_n}{(a_1+\dots+a_n)^n}$ is roughly linear in $\delta(X,\Delta;\xi) = \frac{n a_1}{a_1+\dots+a_n}$.
\end{rem}

\begin{cor} \label{cor:cone bdd delta bdd below}
Let $n\in \bN$ and let $I\subseteq [0,1]$ be a finite set. Let $\varepsilon,\delta,A>0$. Let $\cS$ be the set of $n$-dimensional polarized log Fano cone singularities $x\in (X,\Delta;\xi)$ with coefficients in $I$ such that
\[
\hvol(x,X,\Delta)\ge \varepsilon,\quad 
\delta(X,\Delta;\xi)\ge \delta,\quad\mathrm{and}\quad
\mldk(x,X,\Delta)\le A.
\]
Then $\cS$ is bounded.
\end{cor}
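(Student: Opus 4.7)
The plan is to derive this corollary as an immediate consequence of Theorem \ref{thm:polarized cone bdd} combined with the comparison inequality of Lemma \ref{lem:density>=delta^n}. The strategy is to convert a uniform lower bound on the stability threshold $\delta(X,\Delta;\xi)$ into a uniform lower bound on the volume ratio $\Theta(X,\Delta;\xi)$, which then activates the boundedness criterion already established for volume ratios.

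First, for any $x\in (X,\Delta;\xi)$ in $\cS$, Lemma \ref{lem:density>=delta^n} applied to the $n$-dimensional polarized log Fano cone singularity yields
\[
\Theta(X,\Delta;\xi) \ge \delta(X,\Delta;\xi)^n \ge \delta^n.
\]
Setting $\theta := \delta^n > 0$, the set $\cS$ thus satisfies
\[
\hvol(x,X,\Delta)\ge \varepsilon,\quad \Theta(X,\Delta;\xi)\ge \theta,\quad\text{and}\quad \mldk(x,X,\Delta)\le A
\]
for all its members, with coefficients in the same finite set $I\subseteq[0,1]$.

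Next, I would apply Theorem \ref{thm:polarized cone bdd} directly with the parameters $(n, I, \varepsilon, \theta, A)$. Since the hypotheses of that theorem are exactly those just verified, the conclusion gives that $\cS$ is bounded. The proof is essentially a one-liner at this stage; there is no real obstacle because the hard work has been absorbed into Theorem \ref{thm:polarized cone bdd} (for which the technical core is the effective birationality result, Proposition \ref{prop:bir bdd proj cone}) and into Lemma \ref{lem:density>=delta^n} (whose proof relies on the valuative interpretation of $\delta$ via Okounkov bodies and the identification $S(v)=S(\wt_\xi;v)$ together with the minimizer being $\bT$-invariant). Thus the entire proof reduces to chaining these two statements.
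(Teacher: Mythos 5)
Your proposal is correct and matches the paper's own proof exactly: the paper likewise deduces the corollary by invoking Lemma \ref{lem:density>=delta^n} to obtain $\Theta(X,\Delta;\xi)\ge \delta^n$ and then applying Theorem \ref{thm:polarized cone bdd} with $\theta=\delta^n$. No gaps.
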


\begin{proof}
This is a direct consequence of Theorem \ref{thm:polarized cone bdd} and Lemma \ref{lem:density>=delta^n}.
\end{proof}

Finally we specialize our results to dimension three. For this we need the following result from \cite{Z-mld^K-1}.

\begin{prop} \label{prop:mld^K bdd, vol>epsilon, dim=3}
Let $\varepsilon>0$ and let $I\subseteq [0,1]$ be a finite set. Then there exists some constant $A=A(\varepsilon,I)$ such that 
\[
\mldk(x,X,\Delta)\le A
\]
for all $3$-dimensional klt singularity $x\in (X,\Delta)$ with coefficients in $I$ and $\hvol(x,X,\Delta)\ge \varepsilon$.
\end{prop}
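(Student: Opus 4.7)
My approach is to reduce to $3$-dimensional K-semistable log Fano cone singularities via the Stable Degeneration Theorem, and then handle that case through the orbifold cone description.

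By Theorem \ref{thm:SDC}, a $3$-dimensional klt singularity $x\in(X,\Delta)$ with $\hvol\ge\varepsilon$ specially degenerates to a K-semistable log Fano cone $x_0\in(X_0,\Delta_0;\xi)$ with the same local volume. Since the degeneration is $\bG_m$-equivariant and $\bR$-Gorenstein, Koll\'ar components deform in flat families (compare \cite[Theorem 2.34]{HLQ-vol-ACC}), so lifting a Koll\'ar component over $x_0$ yields a Koll\'ar component over $x$ of the same log discrepancy. Thus it suffices to establish the bound for K-semistable log Fano cone singularities.

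For $x_0\in(X_0,\Delta_0;\xi)$ as above, I would perturb $\xi$ to a nearby quasi-regular polarization $\xi'$ using Lemma \ref{lem:vol continuous in Reeb cone}, keeping $\Theta(X_0,\Delta_0;\xi')\ge \frac{1}{2}$. This realizes $(X_0,\Delta_0)$ as an orbifold cone $C_a(V,L)$ over a $2$-dimensional log Fano pair $(V,\Delta_V+\Delta_L)$; the vertex exceptional divisor $V_0$ is a Koll\'ar component with $A_{X_0,\Delta_0}(V_0)=r$, where $-(K_V+\Delta_V+\Delta_L)\sim_\bR rL$ by Lemma \ref{lem:orbifold cone property}. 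Lemmas \ref{lem:Sasaki vol as global vol} and \ref{lem:vol<=n^n} then give $r^{3}\cdot L^{2}=\hvol_{X_0,\Delta_0}(\wt_{\xi'})\le 54$, so bounding $r$ from above reduces to bounding $L^{2}$ from below.

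For the $L^{2}$ bound I would show that $(V,\Delta_V+\Delta_L)$ belongs to a bounded family of $2$-dimensional log Fanos: the coefficients of $\Delta_V+\Delta_L$ lie in a finite set depending on $I$, K-semistability of $(X_0,\Delta_0;\xi)$ gives a positive lower bound on the $\alpha$-invariant of the orbifold base via Lemma \ref{lem:alpha gives Izumi const}, and $2$-dimensional BAB then yields boundedness of $(V,\Delta_V+\Delta_L)$, hence a uniform lower bound on $L^{2}$. The main obstacle is precisely this boundedness in the torus-rank-one case: a priori $L^{2}$ could tend to zero even when $\hvol$ stays bounded below, so one must carefully exploit K-semistability of the cone to rule this out. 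The complementary cases, where the torus acting on $X_0$ has rank $2$ (complexity one) or $3$ (toric), are already covered by \cite{MS-bdd-complexity-one} and the toric part of \cite{Z-mld^K-1}, respectively.
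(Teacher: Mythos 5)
The paper's own proof is a one-line citation: for $I\subseteq\bQ$ it quotes \cite[Corollary 6.11]{Z-mld^K-1} and then applies Lemma~\ref{lem:R to Q} to reduce real coefficients to rational ones, so your from-scratch argument is a genuinely different route --- but it has real gaps. The first is the reduction step. To conclude ``it suffices to establish the bound for K-semistable log Fano cone singularities,'' you need a Koll\'ar component over the central fiber $x_0$ of the stable degeneration to lift to a Koll\'ar component over $x$ of comparable log discrepancy. The result you invoke, \cite[Theorem 2.34]{HLQ-vol-ACC}, produces a flat family of Koll\'ar components after stratifying the base of a given family; it does not allow you to deform a \emph{prescribed} Koll\'ar component from the special fiber outward, nor does it a priori relate $\mldk$ of the special fiber to that of the general fiber in the direction you need. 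This step requires a separate argument.

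The second gap is the one you flag yourself, and the tools you propose do not close it. Two-dimensional BAB cannot be applied as stated because the coefficients of $\Delta_L=\sum\frac{b_i-1}{b_i}D_i$ do \emph{not} lie in a finite set determined by $I$: the $b_i$ are a priori unbounded. Controlling the $b_i$ amounts to bounding the Cartier index of $V_0$ on the orbifold blowup, which by Lemma~\ref{lem:Cartier index on plt blowup} requires exactly the upper bound on $A_{X_0,\Delta_0}(V_0)=r$ you are trying to prove, so the argument is circular. Moreover, the asserted $\alpha$-invariant lower bound for $(V,\Delta_V+\Delta_L)$ is not what Lemma~\ref{lem:alpha gives Izumi const} supplies --- that lemma takes an $\alpha$-invariant of a Koll\'ar component as input and outputs a bound on local lct's, not the reverse --- and it does not follow from K-semistability of $(X_0,\Delta_0;\xi)$ once $\xi$ has been perturbed to a quasi-regular $\xi'$, whose orbifold base pair need not be K-semistable. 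So while the orbifold-cone framing and the inequality $r^3L^2\le 54$ are correct and match the circle of ideas in Section~\ref{s:proof}, the crucial lower bound on $L^2$ remains unproved, which is precisely why the paper instead imports the finished dimension-three estimate from the prequel.
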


\begin{proof}
If $I\subseteq \bQ$ this is \cite[Corollary 6.11]{Z-mld^K-1}; in general we apply Lemma \ref{lem:R to Q} to reduce to the rational coefficient case.
\end{proof}

\begin{cor} \label{cor:3-dim Kss cone bdd}
For any finite set $I\subseteq [0,1]$ and any $\varepsilon>0$, the set of $3$-dimensional K-semistable log Fano cone singularities with coefficients in $I$ and with local volume at least $\varepsilon$ is bounded.
\end{cor}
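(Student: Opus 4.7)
The plan is to deduce this corollary directly by combining the boundedness criterion of Corollary \ref{cor:K-ss cone bdd} with the dimension-three $\mldk$ bound recorded in Proposition \ref{prop:mld^K bdd, vol>epsilon, dim=3}. Both inputs are already established in the paper, so the proof reduces to assembling them.

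Concretely, let $\cS$ denote the set in question: $3$-dimensional K-semistable log Fano cone singularities $x\in (X,\Delta)$ with $\Coef(\Delta)\subseteq I$ and $\hvol(x,X,\Delta)\ge \varepsilon$. First, I would invoke Proposition \ref{prop:mld^K bdd, vol>epsilon, dim=3} with $n=3$, the given $\varepsilon$, and the given finite coefficient set $I$, to obtain a constant $A=A(\varepsilon,I)>0$ such that every $x\in (X,\Delta)$ in $\cS$ satisfies $\mldk(x,X,\Delta)\le A$. Note that this step does not use K-semistability; it applies to all klt singularities of dimension three with the prescribed local volume lower bound and coefficient constraint.

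Next, I would feed this uniform upper bound on $\mldk$, together with the hypotheses $\hvol(x,X,\Delta)\ge \varepsilon$ and $\Coef(\Delta)\subseteq I$, into Corollary \ref{cor:K-ss cone bdd} (equivalently, Theorem \ref{main:K-ss cone bdd}), applied with $n=3$. The conclusion is precisely that $\cS$ is bounded. There is no conceptual obstacle here: the corollary is an immediate synthesis, and the only mild subtlety is to observe that Proposition \ref{prop:mld^K bdd, vol>epsilon, dim=3} is stated for arbitrary klt singularities (not just log Fano cones), so it certainly applies to each member of $\cS$. The two real theorems doing the work are Theorem \ref{thm:polarized cone bdd} (which supplies the hard boundedness half of Corollary \ref{cor:K-ss cone bdd}) and the dimension-three $\mldk$ estimate of \cite{Z-mld^K-1}; both are allowed to be quoted at this point.
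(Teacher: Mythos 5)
Your proposal matches the paper's proof exactly: the paper also deduces this corollary immediately by combining Corollary \ref{cor:K-ss cone bdd} with Proposition \ref{prop:mld^K bdd, vol>epsilon, dim=3}. Your added remark that the $\mldk$ bound holds for arbitrary $3$-dimensional klt singularities (not just log Fano cones) is a correct and harmless clarification.
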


\begin{proof}
Immediate from Corollary \ref{cor:K-ss cone bdd} and Proposition \ref{prop:mld^K bdd, vol>epsilon, dim=3}.
\end{proof}

% % Using Lemma \ref{lem:reduce coef>epsilon to bounded denominator}, we also get a log boundedness result when the coefficients are only bounded away from zero.

% \begin{cor} \label{cor:3-dim log bdd}
% For any $\varepsilon>0$, the set of $3$-dimensional log Fano cone singularities with coefficients and local volume at least $\varepsilon$ is log bounded.
% \end{cor}

% \begin{proof}
% By Lemma \ref{lem:reduce coef>epsilon to bounded denominator}, there exist some $r\in \bN$ and some $\varepsilon_0>0$ depending only on $\varepsilon$ such that for any $3$-dimensional log Fano cone singularity $x\in (X,\Delta)$ with coefficients and local volume at least $\varepsilon$, there exists some $\Delta_+$
% \end{proof}

The last application concerns the distribution of local volumes in dimension $3$. For any $n\in\bN$ and $I\subseteq [0,1]$, consider the set $\mathrm{Vol}^{\mathrm{loc}}_{n,I}$ of all possible local volumes of $n$-dimensional klt singularities $x\in (X,\Delta)$ with coefficients in $I$. % As a consequence of the Stable Degeneration Conjecture, we have:

% \begin{lem} \label{lem:vol(all klt)=vol(Kss)}
% Assume that $I$ is closed under addition, i.e. for all $a,b\in I$, we have $a+b\in I$ whenever $a+b\le 1$. Then $\mathrm{Vol}^{\mathrm{loc}}_{n,I}$ is also the set of local volumes of $n$-dimensional K-semistable log Fano cone singularities with coefficients in $I$.
% \end{lem}

% \begin{proof}
% By \cite{XZ-SDC}*{Theorem 1.2(1)}, the minimizer $v$ of the normalized volume function induces a special degeneration of $x\in (X,\Delta)$ to a K-semistable polarized log Fano cone singularity $x_0\in (X_0=\Spec(\gr_v R),\Delta_0;\xi_v)$ (we use the notation in \cite{XZ-SDC}). Since $I$ is closed under addition, the coefficients of $\Delta_0$ lies in $I$. By construction we have
% \[
% \hvol_{X,\Delta}(v)=\hvol_{X_0,\Delta_0}(\wt_{\xi_v}).
% \]
% Since $\hvol(x,X,\Delta)=\hvol_{X,\Delta}(v)$ (as $v$ is the minimizer of $\hvol_{X,\Delta}$) and $\hvol(x_0,X_0,\Delta_0)=\hvol_{X_0,\Delta_0}(\wt_{\xi_v})$ (as $x_0\in (X_0=\Spec(\gr_v R),\Delta_0;\xi_v)$ is K-semistable), we see the degeneration preserves the local volume. This proves the lemma.
% \end{proof}

\begin{cor} \label{cor:vol discrete in dim 3}
Let $I\subseteq [0,1]$ be a finite set. Then $\mathrm{Vol}^{\mathrm{loc}}_{3,I}$ is discrete away from zero.
\end{cor}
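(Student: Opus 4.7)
The plan is to reduce the statement to a boundedness problem via the Stable Degeneration Conjecture, and then conclude by constructibility of the local volume function on bounded families. Since every $3$-dimensional klt local volume is bounded above by $3^3 = 27$ by Lemma \ref{lem:vol<=n^n}, it suffices to show that for every fixed $\varepsilon > 0$ the set $\mathrm{Vol}^{\mathrm{loc}}_{3,I} \cap [\varepsilon, 27]$ is \emph{finite}.

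First, I would apply Theorem \ref{thm:SDC}: every $3$-dimensional klt singularity $x \in (X,\Delta)$ with $\Coef(\Delta) \subseteq I$ and $\hvol(x, X, \Delta) \geq \varepsilon$ admits a special degeneration to a K-semistable log Fano cone singularity $x_0 \in (X_0, \Delta_0)$ whose coefficients again lie in $I$ (the special degeneration is a $\bG_m$-equivariant degeneration that preserves the coefficient multiset) and which satisfies $\hvol(x_0, X_0, \Delta_0) = \hvol(x, X, \Delta)$. Hence every element of $\mathrm{Vol}^{\mathrm{loc}}_{3,I} \cap [\varepsilon, 27]$ is already realized as the local volume of some $3$-dimensional K-semistable log Fano cone singularity with coefficients in $I$ and local volume $\geq \varepsilon$.

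Next, I would invoke Corollary \ref{cor:3-dim Kss cone bdd}, which says that the collection of such K-semistable log Fano cone singularities is bounded: by Definition \ref{defn:cone bdd} they arise as fibers of finitely many $\bR$-Gorenstein families $B_i \subseteq (\cX_i, \cD_i) \to B_i$ of klt singularities. On each such family the local volume function $b \mapsto \hvol(b, \cX_{i,b}, \cD_{i,b})$ is constructible by \cite[Theorem 1.3]{Xu-quasi-monomial} (extended to real coefficients by the discussion recorded in the proof of Theorem \ref{thm:SDC} and the alternative argument in the proof of Lemma \ref{lem:vol and mld^K bdd in family}), and hence takes only finitely many values on the Noetherian base $B_i$. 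Since there are only finitely many families $B_i$, the combined set of values is finite, which gives the required finiteness of $\mathrm{Vol}^{\mathrm{loc}}_{3,I} \cap [\varepsilon, 27]$.

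Since all three ingredients—the stable degeneration with real coefficients, the dimension $3$ boundedness statement of Corollary \ref{cor:3-dim Kss cone bdd}, and the constructibility of $\hvol$ in $\bR$-Gorenstein families—are already in hand by the time we reach this corollary, there is no genuine obstacle here; the argument is a direct concatenation of these results. The only thing one must be careful about is to use the real-coefficient versions of Theorem \ref{thm:SDC} and of constructibility rather than their original rational-coefficient formulations, both of which are explicitly justified earlier in the paper.
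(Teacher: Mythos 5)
Your argument is correct and follows essentially the same route as the paper: degenerate to a K-semistable log Fano cone via Theorem \ref{thm:SDC}, invoke Corollary \ref{cor:3-dim Kss cone bdd} for boundedness, and conclude by constructibility of $\hvol$ in $\bR$-Gorenstein families. The only cosmetic differences are your explicit cap at $27$ via Lemma \ref{lem:vol<=n^n} (the paper simply works with $[\varepsilon,+\infty)$) and your citation for the real-coefficient constructibility (the paper points to the $\bR$-coefficient version in \cite[Theorem 3.5]{HLQ-vol-ACC}); neither affects the substance of the argument.
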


\begin{proof}
By Theorem \ref{thm:SDC}, it suffices to consider local volumes of K-semistable log Fano cone singularities. Let $\varepsilon>0$. By Corollary \ref{cor:3-dim Kss cone bdd}, the set of $3$-dimensional K-semistable log Fano cone singularities $x\in (X,\Delta)$ with coefficients in $I$ and $\hvol(x,X,\Delta)\ge \varepsilon$ is bounded. On the other hand, the local volume function is constructible in $\bR$-Gorenstein families by \cite[Theorem 1.3]{Xu-quasi-monomial} (see \cite[Theorem 3.5]{HLQ-vol-ACC} for the real coefficient case). In particular, the local volumes only take finitely many possible values in a bounded family. This implies $\mathrm{Vol}^{\mathrm{loc}}_{3,I}\cap [\varepsilon,+\infty)$ is a finite set and we are done.
\end{proof}

\begin{rem}
By combining the ideas in this work with some generalization of the proof of \cite[Theorem 1.2(2)]{HLQ-vol-ACC}, one should be able to show that if the set $I$ in Corollary \ref{cor:vol discrete in dim 3} is not finite but satisfies DCC (descending chain condition), then $\mathrm{Vol}^{\mathrm{loc}}_{3,I}$ satisfies ACC (ascending chain condition). We leave the details to the interested readers.
\end{rem}

\bibliography{ref}

\end{document}